\documentclass[11pt]{amsart}
\usepackage[top=3cm, bottom=3cm, left=2.5cm, right=2.5cm]{geometry}
\usepackage{graphicx} % Required for inserting images
\usepackage{bm,tikz}
\usepackage{mathtools,enumitem}
\usepackage{amsfonts,amsthm,amsmath,amssymb,units}
\usepackage{hyperref} % must be loaded before cleveref
\usepackage{comment}

\renewcommand{\k}{\kappa}
\renewcommand{\t}{\theta}
\newcommand{\R}{\mathbb{R}}
\newcommand{\wQ}{\widehat{Q}}

\newcommand{\wG}{\widehat{G}}
\newcommand{\wF}{\widehat{F}}
\newcommand{\wB}{\widehat{B}}
\newcommand{\wC}{\widehat{C}}
\newcommand{\wJ}{\hat{J}}
\newcommand{\fW}{\mathsf{W}}
\newcommand{\fX}{\mathsf{X}}
\newcommand{\fY}{\mathsf{Y}}
\newcommand{\fZ}{\mathsf{Z}}

\newcommand{\mA}{\mathcal{A}}
\newcommand{\mP}{\mathcal{P}}
\newcommand{\mB}{\mathcal{B}}

\newcommand{\mM}{\mathcal{M}}
\newcommand{\mH}{\mathcal{H}}

\newcommand{\mC}{\mathcal{C}}
\newcommand{\mD}{\mathcal{D}}
\newcommand{\mY}{\mathcal{Y}}
\newcommand{\mZ}{\mathcal{Z}}

\newtheorem{theorem}{Theorem}
\newtheorem{lemma}[theorem]{Lemma}
\newtheorem{corollary}[theorem]{Corollary}
\newtheorem{proposition}[theorem]{Proposition}
\theoremstyle{definition}
\newtheorem{remark}[theorem]{Remark}
\newtheorem{example}[theorem]{Example}

\usepackage[nameinlink]{cleveref}
\usepackage[version=3]{mhchem}

\newcommand\red[1]{\textcolor{red}{#1}}

\newenvironment{ex}
  {\pushQED{\qed}\example}
  {\popQED\endexample}

\title[On fold and cusp bifurcations in mass action networks]{On the computation of fold and cusp bifurcations in mass action networks via dual equations}
\author{Murad Banaji}
\date{January-May 2026}

\begin{document}

\begin{abstract}
This paper presents some results useful in identifying or ruling out fold and cusp bifurcations in mass action networks, and hence in studying multistationarity in these networks. The main result is that the occurrence of fold and cusp bifurcations in mass action networks can be confirmed or ruled out using alternative systems of equations, obtained using principles of Gale duality, rather than the original mass action equations. This can, for certain classes of networks, greatly simplify the calculations involved, avoiding the need for explicit centre manifold reduction or Lyapunov-Schmidt reduction when checking bifurcation, nondegeneracy and transversality conditions. Moreover, the approach leads to immediate corollaries on the non-occurrence of bifurcations in classes of networks with certain combinatorial properties. We present a number of examples illustrating the results and corollaries.

\vspace{0.5cm}
\noindent
{\bf MSC.} 37G10, 92E20, 34C08, 05E40
\end{abstract}

\maketitle

\section{Introduction}

Studying bifurcations in chemical reaction networks helps us make sense of their dynamics, and can give us information on important behaviours such as multistationarity, oscillation and chaos. A small sample of classical and modern work using bifurcation theory to study the dynamics of reaction networks includes \cite{frank-kamenetsky:salnikov:1943, Selkov1968, DiCera1989, Gatermann2005, Conradi2007, abphopf, errami2015, Plesa2015, obatake2019, banajiborosnonlinearity, Vassena2023,banaji:boros:hofbauer:2025, Huang2025}. As network size increases, the computational difficulty of studying bifurcations, and in particular of checking nondegeneracy and transversality conditions, tends to increase rapidly (see, for example, the analysis of focal values in the study of Andronov--Hopf and Bautin bifurcations in \cite{banajiborosnonlinearity,boros_github}). Centre-manifold reduction \cite{carr2012applications} or Lyapunov-Schmidt reduction \cite{golubitsky1985} are generally required; but implementing these, for example using computer algebra systems, can be challenging. When checking bifurcation conditions in high-dimensional systems, various alternatives to explicit centre-manifold reduction are available \cite[Chapter 5]{kuznetsov:2023}, but these too ultimately lead to feasibility problems which grow rapidly more complex as the network size increases. It can be tempting to reduce the complexity of the calculations by focussing on basic bifurcation conditions and ignoring questions of nondegeneracy and transversality, but this is risky as reaction networks can robustly display non-generic or incompletely unfolded bifurcations (see \cite{banaji:boros:hofbauer:2023} and examples in \cite{banajiborosnonlinearity,BBH2024smallbif}).

With these remarks in mind, finding approaches which reduce the computational complexity of checking bifurcation conditions, including nondegeneracy and transversality conditions, becomes important. Here, we focus our attention on {\bf mass action networks}, namely, reaction networks with mass action kinetics. One strategy we have previously developed involves {\em inheritance results} \cite{banaji:boros:hofbauer:2025} which allow us to infer dynamical behaviours, including bifurcations, in reaction networks by studying their subnetworks. While these results can allow us to infer the occurrence of bifurcations in large networks without the need for lengthy and complex calculations, they have some limitations: they do not provide a means to rule out bifurcations; and they cannot give us complete descriptions of bifurcation sets. 

Recent work in \cite{banajifeliu26}, using principles of Gale duality \cite{sottile:book} to recast the defining equations for equilibria in mass action networks, opened up another avenue for studying certain bifurcations. We show in this paper that identifying fold and cusp bifurcations in mass action networks, including checking nondegeneracy and transversality, can be accomplished with the help of {\em alternative equations} as constructed in \cite{banajifeliu26}, see also a related construction in \cite{regensburger:gale}. For certain classes of networks, this approach allows us to obtain bifurcation sets for fold and cusp bifurcations using relatively simple algebra, and without the need for explicit centre-manifold or Lyapunov-Schmidt reduction. Moreover, it gives us immediate corollaries about the non-occurrence of bifurcations in certain classes of networks, which are not easily derived from the original mass action equations. 

The remainder of this paper is laid out as follows. In Section~\ref{secexamples}, we present some motivating examples, demonstrating how the theory to be developed can greatly simplify computations involving fold and cusp bifurcations. In Section~\ref{secprelim} we present preliminaries on bifurcations, and in particular fold and cusp bifurcations, in a general context. In Section~\ref{secMA} we focus on mass action networks, recapping and developing some theory from \cite{BBH2024smallbif} and \cite{banajifeliu26}. In Section~\ref{secmain} we present the main result of this paper, \Cref{foldcuspmain}, and its proof. Finally in Section~\ref{seccorollaries} we present some corollaries and concluding remarks. 

\section{Motivating examples}
\label{secexamples}
To motivate the theory to follow, we start with a few examples. All claims are justified using \Cref{foldcuspmain}, the main result of this paper, with \Cref{prop:coord,prop:elimination,prop:prefactor,prop:power} justifying some of the intermediate calculations. Some notions and terminology used in the examples will be elaborated on later. 

We write $\mathbb{R}^n_+:=\{x \in \mathbb{R}^n\,:\,x_i> 0,\,\,i=1,\ldots,n\}$ for the positive orthant in $\mathbb{R}^n$. Given a system of $n$ equations in $n$ variables, say $f(x) = 0$, we will refer to a solution $x_0$ as {\bf degenerate} if $J^{(f)}(x_0)$, the Jacobian matrix of $f$, evaluated at $x_0$, is singular; and as {\bf simply degenerate} if $J^{(f)}(x_0)$ has a simple zero eigenvalue, and no other eigenvalues on the imaginary axis. 

The first example can easily be analysed using the original mass action equations (with some help from \Cref{prop:elimination} below), but is useful to illustrate the approach here. 

\begin{ex}
\label[example]{Ex1}
Consider the following $2$-species, $4$-reaction network with no linear conservation laws:  
\[
\fX \ce{->[\k_1]} \mathsf{0}\,,\quad \mathsf{0} \ce{->[\k_2]} \fX + \fY,\quad \fX + \fY \ce{->[\k_3]} 3\fX\,,\quad 2\fX \ce{->[\k_4]} 3\fX\,.
\]
This network appeared in Example 2 of \cite{BBH2024smallbif}: despite its simplicitly, it admits a Bogdanov--Takens bifurcation, and hence fold, Andronov--Hopf and homoclinic bifurcations. It gives rise to the following (quadratic) mass action differential equations:
\[
\begin{array}{rcl}
\dot x &=& -\k_1 x +\k_2 +2\k_3xy+\k_4x^2\,,\\
\dot y &=& \k_2 - \k_3 xy\,.
\end{array}
\]
An easy direct calculation confirms that the network has a degenerate equilibrium if and only if $\frac{\k_1^2}{\k_2\k_4} = 12$, and this equilibrium is simply degenerate whenever $\k_3 \neq \frac{2}{3}\k_4$. The alternative equations, whose derivation is presented in detail in \cite{banajifeliu26} and more briefly again in Section~\ref{secMA} below, can be written:
\begin{equation}
\label{alteq1}
f(\alpha, \theta):=(4+\theta)\alpha^2 + (4-\theta)\alpha + 1 = 0, \quad \mbox{where} \quad \alpha \in (0,1) \,\,\mbox{and} \,\,\theta := \frac{\k_1^2}{\k_2\k_4}\,.
\end{equation}
From rapid examination of \eqref{alteq1}, the results of \cite{banajifeliu26} tell us that the mass action system has:
  \begin{itemize}
  \item[(i)] no positive equilibria for $\frac{\k_1^2}{\k_2\k_4} < 12$;
  \item[(ii)] two positive nondegenerate equilibria for $\frac{\k_1^2}{\k_2\k_4} > 12$;
  \item[(iii)] a single positive equilibrium, which is degenerate, when $\frac{\k_1^2}{\k_2\k_4} = 12$. 
\end{itemize}
The results to be developed in this paper additionally tell us that the system has:
  \begin{itemize}
  \item[(iv)] a nondegenerate fold bifurcation if and only if $\frac{\k_1^2}{\k_2\k_4} = 12$ and $\k_3 \neq \frac{2}{3}\k_4$; and this bifurcation is always unfolded by the rate constants $\k$.
\end{itemize}
The claims about nondegeneracy and transversality in (iv) rest on the easy calculations $f_{\alpha\alpha}(1/4,12) \neq 0$ and $f_\theta(1/4,12) \neq 0$. 
\end{ex}

For the next network, which appeared in \cite[Example 4.34]{banajifeliu26}, we derive a parameterisation of the cusp bifurcation set which would be somewhat challenging to derive directly from the mass action equations.  
\begin{ex}
 \label{ex342b}
Consider the following bimolecular 3-species, 4-reaction network with a single linear conservation law:
\[
\fX \ce{->[\k_1]} \fY\,,\quad \fY \ce{->[\k_2]} \fZ,\quad 2\fX \ce{->[\k_3]} 2\fZ\,,\quad \fY+\fZ \ce{->[\k_4]} 2\fX\,.
\]
The network gives rise to the following mass action differential equations
\[
\begin{array}{rcl}
\dot x &=& -\k_1 x - 2\k_3 x^2 + 2\k_4 yz\,,\\
\dot y &=& \k_1 x - \k_2 y - \k_4 yz\,,\\
\dot z &=& \k_2 y + 2\k_3 x^2 - \k_4 yz\,.
\end{array}
\]
As $\dot x + \dot y + \dot z = 0$, given any $K \in \R_+$, each {\em positive stoichiometric class} of the form $S_K:=\{(x,y,z) \in \R^3_+\,:\, x+y+z = K\}$ is locally invariant. Following the proof of Theorem~4.1 in \cite{boros:hofbauer:2022}, or by direct calculation, we note that after multiplication by $\frac{1}{xyz}$, the divergence of the mass action vector field is negative everywhere on $\R^3_+$; consequently, any equilibrium degenerate relative to its positive stoichiometric class, say $S_K$, is simply degenerate relative to $S_K$. In this case, the alternative equations, corresponding to the restriction of the system to $S_K$, consist of a single equation
\begin{equation}
  \label{eqpoly2}
f(\alpha; \t_1, \t_2, \t_3, K):=2\t_1\alpha(1-\alpha)+\t_2\alpha^{2}(1-\alpha) + 4\t_3(2-\alpha) -4K\alpha = 0\,,
\end{equation}
where $\alpha \in (0,1)$, and $\t_1 = \k_1/\k_3,\,\t_2=\k_1^2/(\k_2\k_3),\,\,\t_3=\k_2/\k_4$. By the results here, to find fold and cusp bifurcations, it suffices to examine $f$: we complete the analysis for the harder case of cusp bifurcations. We can easily simplify the equations $f = f_\alpha = f_{\alpha \alpha} = 0$ to get
\[
\t_1 = \frac{(1-3\alpha)\t_2}{2},\,\, \t_3 = \frac{\alpha^3 \t_2}{8},\,\, K = \frac{(2-6\alpha+6\alpha^2-\alpha^3)\t_2}{8}\,, 
\]
with $\{(\alpha, \t_2) \in (0,1/3) \times \R_+\}$ (note that $\t_1 > 0$ implies that $\alpha<1/3$, and that $K>0$ holds for all $(\alpha,\theta_2) \in (0,1) \times \R_+$). Further, we have the easy calculations: $f_{\alpha\alpha\alpha} = -6\t_2 < 0$ for all $\t_2 \in \R_+$; and $f_{\t_1}f_{\alpha\t_2}-f_{\t_2}f_{\alpha \t_1} = 2\alpha^2(1-\alpha)^2>0$ for all $\alpha \in (0,1)$. We may now use the results here to translate this information back into the following conclusions in terms of the original state variables $x,y,z$ and parameters $\k_i$: given $(\alpha, \beta, \gamma) \in (0,1/3) \times \R_+ \times \R_+$, when the rate constants are
\[
(\k_1,\k_2,\k_3,\k_4) = \gamma\left(\frac{2}{(1-3\alpha)\beta},\,\,\frac{1}{\beta}, \,\,\frac{4}{(1-3\alpha)^2\beta^2}, \,\,\frac{8}{\alpha^2\beta^2}\right)
\]
a nondegenerate cusp bifurcation, unfolded by the rate constants, occurs at 
\[
(x,y,z) = \left(\frac{\beta(1-3\alpha)(1-\alpha)}{4},\,\,\frac{\alpha\beta(1-\alpha)}{4},\,\,\frac{\alpha^2\beta(2-\alpha)}{8}\right)\,.
\]
Moreover, this completely characterises cusp bifurcations in this network. 
\end{ex}

The following example appeared as Example~4.36 in \cite{banajifeliu26} and corresponds to a phosphorylation mechanism  studied in \cite{Kothamachu2015multistability}. It is more complicated than the previous examples, involving six reactions, six chemical species, and two independent linear conservation laws. For brevity, we do not present a complete analysis of fold and cusp bifurcations in this network, but illustrate how such an analysis is simplified using the approach in this paper. 
\begin{ex}
\label{ex664}
Consider the following bimolecular 6-species, 6-reaction network:
\[
\fX_1 \ce{->[\k_1]} \fX_2 \ce{->[\k_2]} \fX_3 \ce{->[\k_3]} \fX_4, \quad \fX_3+\fX_5 \ce{->[\k_4]} \fX_1 + \fX_6, \quad \fX_4+\fX_5 \ce{->[\k_5]} \fX_2 + \fX_6, \quad \fX_6 \ce{->[\k_6]} \fX_5\,,
\]
leading to the mass action differential equations
\[
\begin{array}{rcl}
\dot x_1 &=& -\k_1 x_1 + \k_4x_3x_5\,,\\
\dot x_2 &=& \k_1x_1-\k_2x_2+\k_5x_4x_5\,,\\
\dot x_3 &=& \k_2x_2 - \k_3x_3 - \k_4x_3x_5\,,\\
\dot x_4 &=& \k_3x_3 - \k_5x_4x_5\,,\\
\dot x_5 &=& -\k_4x_3x_5-\k_5x_4x_5+\k_6x_6\,,\\
\dot x_6 &=& \k_4x_3x_5 + \k_5x_4x_5 - \k_6x_6\,.
\end{array}
\]
We may check with some direct calculations that any degenerate positive equilibrium is simply degenerate w.r.t. its stoichiometric class: for example, we may check with computer algebra that all monomials in the determinant of the second additive compound \cite{muldowney} of the reduced Jacobian matrix \cite{banajipantea} have the same sign which is sufficient to guarantee the claim. In this case, the alternative equations can be written as a pair of polynomial equations
\begin{equation}
\label{eq664}
\begin{array}{rcl}
 \t_5\alpha + \t_6(1-\alpha)\mu & = & K_1(1-\alpha)\,,\\[5pt]
 \t_1\alpha^2\mu + \t_2\alpha\mu + \t_3\alpha(1-\alpha)\mu + \t_4(1-\alpha)^2\mu & = & K_2\alpha\,,
\end{array}
\end{equation}
where $\alpha \in (0,1)$, $\t = (\t_1, \ldots, \t_6) \in \R^6_+$ are rational combinations of the rate constants, and $K=(K_1,K_2) \in \R^2_+$ is a constant corresponding to a choice of some particular stoichiometric class. By \Cref{foldcuspmain}, in order to find fold and cusp bifurcations in this network it is necessary and sufficient to examine \eqref{eq664}. Eliminating $\mu$ and clearing denominators (justified by \Cref{prop:elimination,prop:prefactor} below) reduces \eqref{eq664} to a single cubic equation of the form:
\[
f(\alpha, K,\t):=p_3(K,\t)\alpha^3 + p_2(K,\t)\alpha^2+p_1(K,\t)\alpha + p_0(K,\t) = 0\,,
\]
where each $p_i$ is a quadratic polynomial in $(K,\t)$. From the point of view of studying fold and cusp bifurcations, $p_1, \ldots, p_4$ are the relevant combinations of the eight parameters $\k_1, \ldots, \k_6, K_1, K_2$. A fold bifurcation occurs whenever $(\alpha_0,K_0,\t_0) \in (0,1) \times \R^2_+ \times \R^6_+$ satisfies 
\[
0=f(\alpha_0,K_0,\t_0) = f_\alpha(\alpha_0, K_0, \t_0)\,,
\]
and is nondegenerate and unfolded by the parameters $\t$ (hence, $\k$) provided $f_{\alpha\alpha}(\alpha_0, K_0, \t_0) \neq 0$ and $f_{\t_i}(\alpha_0, K_0, \t_0) \neq 0$ for some $i$. A cusp bifurcation occurs when $(\alpha_0,K_0,\t_0) \in (0,1) \times \R^2_+ \times \R^6_+$ satisfies $0=f(\alpha_0,K_0,\t_0) = f_\alpha(\alpha_0, K_0, \t_0) = f_{\alpha\alpha}(\alpha_0,K_0,\t_0)$, and is nondegenerate and unfolded by the parameters $\t$ (hence, $\k$) provided $f_{\alpha\alpha\alpha}(\alpha_0, K_0, \t_0) \neq 0$ and $(f_{\alpha\t_i}f_{\t_j} - f_{\alpha\t_j}f_{\t_i})(\alpha_0, K_0, \t_0) \neq 0$ for some $i,j$. We may translate these conditions in a natural way into conditions on the concentrations of species $\fX_i$ and rate constants $\k_i$.
\end{ex}

The previous examples demonstrated how using the alternative equations can simplify the calculations needed to confirm fold and cusp bifurcations. The next two examples demonstrate how the techniques can also help us rule out bifurcations. 

\begin{ex}
\label[example]{Ex363a}
Consider the following $3$-species, $6$-reaction network without linear conservation laws:  
\[
\fX \ce{->[\k_1]} 2\fZ\,,\quad 2\fZ \ce{->[\k_2]} \fX + \fZ,\quad \fX + \fZ \ce{->[\k_3]} \fX\,,\quad \fY+\fZ \ce{->[\k_4]} \fY\,,\quad \fX+\fY \ce{->[\k_5]} \fZ\,,\quad \fZ \ce{->[\k_6]} \fX+\fY\,.
\]
The corresponding mass action system has the form
\[
\begin{array}{rcl}
\dot x &=&\k_1x + \k_2z^2 - \k_5xy+\k_6z\,,\\
\dot y &=& -\k_5xy + \k_6z\,,\\
\dot z &=& 2\k_1x - \k_2z^2-\k_3xz - \k_4yz+\k_5xy-\k_6z\,.
\end{array}
\]
This example will be revisited in \Cref{Ex363b}, where we will see that although there are six rate constants, $\k_1, \ldots, \k_6$, from the point of view of bifurcations affecting the equilibrium set, only one parameter combination, $\frac{\k_2^2\k_1\k_5}{\k_3^2\k_4\k_6}$ matters. Thus the network cannot admit a cusp bifurcation for the simple reason that, effectively, it does not have sufficient parameters to unfold this bifurcation. 
\end{ex}

In the next example, the claim that cusp bifurcations are ruled out arises in a similar, but somewhat more subtle, way.
\begin{ex} Consider the following $4$-species, $5$-reaction mass action network with a single linear conservation law:
\[
\fW \ce{->[\k_1]} \fX + \fY,\quad 2\fX \ce{->[\k_2]} \fW+\fX,\quad \fW+\fY \ce{->[\k_3]} \fW + \fZ,\quad \fZ \ce{->[\k_4]} 2\fZ,\quad \fY + \fZ \ce{->[\k_5]} \mathsf{0}\,.
\]
The mass action equations corresponding to this network take the form
\[
\begin{array}{rcl}
\dot w &=& -\k_1w+\k_2x^2\,,\\
\dot x &=& \k_1w-\k_2 x^2\,,\\
\dot y &=&\k_1 w - \k_3wy - \k_5 yz\,,\\
\dot z &=& \k_3 wy + \k_4 z - \k_5 yz\,.
\end{array}
\]
As $\dot w + \dot x = 0$, given any $K \in \R_+$ we may consider the restriction of this system to the positive stoichiometric class $S_K:=\{(w,x,y,z) \in \R^4_+\,:\,w + x = K\}$. We claim that for no choice of $(\k,K)\in \R^5_+ \times \R_+$ does the mass action system admit a cusp bifurcation on $S_K$. This follows as this network belongs to a class of networks in \Cref{propnnn1}(ii) below for which nondegenerate, fully unfolded cusp bifurcations are impossible, even though three or more positive equilibria are not ruled out and cusp points may occur in this class of networks. Key to this claim is the fact that the source complexes of the network are affinely dependent and, as a consequence, the network is ``$\mP$-toric'', implying that the alternative equations decompose in a particular way. 
\end{ex}

\section{Preliminaries}
\label{secprelim}

\subsection{Notation}

The symbol $\bm{1}$ will denote a column vector of ones whose length is inferred from the context; while $I$ will denote the identity matrix whose dimensions are again inferred from the context. Given any vector $v \in \R^n$, we write $D_v$ to mean the diagonal $n \times n$ matrix with the entries of $v$ on the diagonal. We denote the $i$th standard basis vector in $\R^n$ by $e_i$. Given $A, B \subseteq \R^n$, we write $A+B := \{a+b\,:\, a \in A, b\in B\}$ and $AB :=\{a\circ b\,:\,a \in A, b \in B\}$, where ``$\circ$'' denotes the entrywise product. We will freely apply arithmetic operations and functions to vectors entrywise without remarking on this: for example, given $u, v \in \R^n$, we write $v^2$ to mean $v \circ v$ and, provided no entry of $v$ is zero, $u/v$ to mean the vector whose $i$th entry is $u_i/v_i$.

For the remainder of this section we let $U \subseteq \R^n,\,V \subseteq \R^m$ be open, while $f\colon U \times V \to \R^n$, $(y,\beta) \mapsto f(y,\beta)$ is smooth. We may interpret ``smooth'' to mean $C^\infty$, but in practice we require only that all derivatives we need to compute exist and are continuous. 

\subsection{Derivatives and Taylor coefficients}
We use the same notation for multilinear maps and their matrix/ tensor representations, as this should cause no confusion. We write $\partial_yf$ (or $f_y$ more briefly) and $\partial_\beta f$ (or $f_\beta$ more briefly) to mean the derivatives of $f$ w.r.t. its first and second arguments respectively, and $\partial_{(y, \beta)} f$ for the derivative of $f$ w.r.t. both arguments, namely $f_y \oplus f_\beta$ (all of these are linear maps). We use similar notation for higher order derivatives, e.g., $f_{yy}$, $f_{y\beta}$, $f_{yyy}$, etc., regarded as multilinear functions. We always assume that after all differentiation has been carried out, derivatives are evaluated at some point of interest, say $p_0=(y_0, \beta_0)$, which will be clear from the context. 

We use the notation $J^{(f)}:=f_y$, and also write, following notation in \cite{kuznetsov:2023}, $B^{(f)}(\cdot\,,\cdot):= f_{yy}(\cdot\,,\cdot)$, and $C^{(f)}(\cdot\,, \cdot\,, \cdot):= f_{yyy}(\cdot, \cdot, \cdot)$, for the symmetric, multilinear functions representing, up to a scaling factor, the quadratic and cubic terms in the Taylor expansion of $f$ w.r.t. its first argument $y$, evaluated at $p_0$. We omit the superscript $f$ from $J^{(f)}$, $B^{(f)}$ and $C^{(f)}$ when the function is obvious from the context.

Given $u,v\in \R^n$, observe that
\[
B^{(f)}(u,v) := \left.\frac{\partial}{\partial t}\left[\frac{\partial}{\partial t'} f(y_0 + t'v + tu,\beta_0)\right]\right|_{t=t'=0}\,, \quad \mbox{hence} \quad B^{(f)}(u,u):=\left.\frac{\partial^2}{\partial t^2}f(y_0 + tu, \beta_0)\right|_{t=0}\,.
\]
Further,
\[
C^{(f)}(u,u,u) := \left.\frac{\partial^3}{\partial t^3}f(y_0 + tu, \beta_0)\right|_{t=0}\,.
\]

\subsection{Local bifurcations of equilibria}
We will be interested in when nondegenerate fold and cusp bifurcations can occur in mass action networks, and when these are unfolded by the rate constants of the network. Following remarks in \cite[Chapter 2]{kuznetsov:2023} and the discussion in \cite[Section 2]{banaji:boros:hofbauer:2025}, we make some general comments about local bifurcations of equilibria of the parameterised family of vector fields $f(y, \beta)$. 

For our purposes, such bifurcations will always be defined by equations and inequalities involving functions of the Taylor coefficients of $f$ at some zero of $f$. These conditions can be divided into (i) basic bifurcation conditions; (ii) nondegeneracy conditions; and (iii) transversality conditions. While the bifurcation and nondegeneracy conditions tell us about the nature of the degeneracy, and involve only derivatives w.r.t. the state variables $y$, the transversality conditions also involve derivatives w.r.t. the parameters $\beta$. We will often slightly abuse terminology and say that $f$ has a particular bifurcation, without referring to the associated dynamical system $\dot y = f(y,\beta)$. 

There are often different equivalent formulations of bifurcation conditions. There is also often a natural order to checking the bifurcation, nondegeneracy and transversality conditions, namely if previous conditions are satisfied, then the computations required for the later conditions may be simplified. In this paper we make the choices which seem most appropriate for our purposes. 

In computations, especially those involving transversality, it is often convenient to replace the given parameters with other combinations of parameters. In this context, the following elementary result is useful.

\begin{proposition}[Reparameterisation]
\label[proposition]{prop:reparam}
Suppose that the system $\dot y = f(y,\beta)$ has some local bifurcation of equilibria $\mB$ at $(y_0, \beta_0)$. Let $k \leq m$ and $\Theta \colon V \to \R^k$ be any smooth map such that $\Theta_\beta(\beta_0)$ is surjective, and suppose that $f(y,\beta) \equiv \hat{f}(y, \Theta(\beta))$, where this equation defines $\hat{f}\colon U \times \Theta(V) \to \R^n$. Then the system $\dot y = \hat{f}(y,\theta)$ has the bifurcation $\mB$ at $(y_0, \theta_0)$, where $\theta_0:=\Theta(\beta_0)$ and the following are equivalent:
\begin{enumerate}
\item[(a)] The bifurcation in $\dot y = f(y,\beta)$ at $(y_0, \beta_0)$ is unfolded transversally by $\beta$;
\item[(b)] The bifurcation in $\dot y = \hat{f}(y,\theta)$ at $(y_0, \theta_0)$ is unfolded transversally by $\theta$.
\end{enumerate}
\end{proposition}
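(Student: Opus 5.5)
The plan is to argue at the level of Taylor coefficients, since in this paper every local bifurcation of equilibria is specified by (i) basic bifurcation conditions, (ii) nondegeneracy conditions and (iii) transversality conditions, all of which are expressions in the derivatives of the vector field at the point of interest.

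First I treat the bifurcation and nondegeneracy conditions. Because $f(y,\beta)\equiv \hat f(y,\Theta(\beta))$, for $\beta$ fixed the maps $y\mapsto f(y,\beta)$ and $y\mapsto \hat f(y,\Theta(\beta))$ are the same function. Hence all pure $y$-derivatives coincide at $(y_0,\beta_0)$ and $(y_0,\theta_0)$:
\[
J^{(f)}=J^{(\hat f)},\qquad B^{(f)}=B^{(\hat f)},\qquad C^{(f)}=C^{(\hat f)},
\]
and likewise for higher orders. Conditions (i) and (ii) involve only these, so they hold for $\hat f$ at $(y_0,\theta_0)$ exactly when they hold for $f$ at $(y_0,\beta_0)$. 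In particular $\hat f$ has the bifurcation $\mB$ at $(y_0,\theta_0)$.

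Next I handle transversality. I fix the standard formulation: the bifurcation is unfolded transversally by the parameters when a certain map $\Phi_f$ has surjective derivative in the parameter directions at the bifurcation point. Here $\Phi_f$ is built from $f$ and its $y$-derivatives, for instance $(f,\det f_y)$ for a fold, or the defining system for a cusp, or the map to the normal-form unfolding coefficients. The identity $f\equiv\hat f\circ(\mathrm{id}\times\Theta)$ carries over to every derivative in $y$, so $\Phi_f(y,\beta)=\Phi_{\hat f}(y,\Theta(\beta))$. By the chain rule,
\[
\partial_\beta\Phi_f=\partial_\theta\Phi_{\hat f}\,\Theta_\beta(\beta_0),
\]
and the mixed derivatives behave the same way, $f_{y\beta}=\hat f_{y\theta}\Theta_\beta$. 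Since $\Theta_\beta(\beta_0)$ is surjective, the image of $\partial_\beta\Phi_f$ equals the image of $\partial_\theta\Phi_{\hat f}$. The same holds for the full derivative $\partial_{(y,\beta)}\Phi_f$, because it equals $\partial_{(y,\theta)}\Phi_{\hat f}\circ(I\oplus\Theta_\beta)$ and $I\oplus\Theta_\beta$ is surjective. So one is surjective if and only if the other is, which gives (a)$\Leftrightarrow$(b).

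The main obstacle is purely definitional: the argument needs a precise, uniform notion of transversal unfolding. The robust choice is to say that transversality is a rank or surjectivity condition on a composite linear map whose parameter factor appears only through first parameter derivatives. With that choice, composing on the right with a surjection preserves the image. One also needs $\Theta(V)$ to contain a neighbourhood of $\theta_0$ so that $\hat f$ is a genuine smooth family near $\theta_0$. This follows from the submersion theorem, since $\Theta_\beta(\beta_0)$ is surjective, after possibly shrinking $V$. Smoothness of $\hat f$ on that neighbourhood then follows by taking a local smooth right inverse of $\Theta$.
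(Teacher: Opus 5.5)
Your proposal is correct and follows essentially the same route as the paper. In both, identical $y$-Taylor coefficients give the bifurcation for $\hat f$. Transversality, phrased as a surjectivity condition on the derivative of a bifurcation function satisfying $\Phi_f(y,\beta)=\Phi_{\hat f}(y,\Theta(\beta))$, is then preserved because right-composition with the surjection $I\oplus\Theta_\beta(\beta_0)$ does not change the image; the paper states this as a rank equality via a right inverse. Your submersion-theorem remark, ensuring that $\Theta(V)$ contains a neighbourhood of $\theta_0$ on which $\hat f$ is smooth, is a small point the paper leaves implicit.
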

\begin{proof}
The claim that $\dot y = \hat{f}(y,\theta)$ has the bifurcation $\mB$ at $(y_0, \theta_0)$ follows trivially from our general assumptions about bifurcations, as the Taylor coefficients of $f$ at $(y_0, \beta_0)$ and $\hat{f}$ at  $(y_0, \theta_0)$ w.r.t. $y$ are identical to all orders. As noted in \cite{banaji:boros:hofbauer:2025}, associated with the bifurcation $\mB$ in $\dot y = f(y,\beta)$ is a ``bifurcation function'', say $F(y, \beta)$, such that in a neighbourhood of $(y_0, \beta_0)$, the bifurcation occurs if and only if $F(y,\beta)=0$. Further, the bifurcation is unfolded transversally by the parameters $\beta$ if and only if (i) $F_y(y_0, \beta_0)$ is injective and (ii) $\partial_{(y,\beta)} F(y_0, \beta_0)$ is surjective. Note that the bifurcation function $\wF(y,\theta)$ for the reparameterised system $\dot y = \hat{f}(y,\theta)$ is naturally defined via $F(y,\beta) = \wF(y,\Theta(\beta))$.

As $F_y(y_0, \beta_0) = \wF_y(y_0, \theta_0)$, the first condition for transversality holds in one case if and only if it holds in the other. Moreover, by the chain rule,
\[
[F_y\,|\, F_\beta] = [\wF_y\,|\,\wF_\theta]M \quad \mbox{where} \quad M=\left(\begin{array}{cc}I&0\\0& \Theta_\beta(\beta_0)\end{array}\right)\,,
\]
hence $\mathrm{rank}\,[F_y\,|\, F_\beta] \leq \mathrm{rank}[\wF_y\,|\,\wF_\theta]$. On the other hand, $M$ is a surjective matrix and hence has a right inverse, say $M^*$, giving $[F_y\,|\, F_\beta]M^* = [\wF_y\,|\,\wF_\theta]$; consequently, $\mathrm{rank}\,[F_y\,|\, F_\beta] \geq \mathrm{rank}[\wF_y\,|\,\wF_\theta]$. Thus $\mathrm{rank}\,[F_y\,|\, F_\beta] = \mathrm{rank}[\wF_y\,|\,\wF_\theta]$, and the second condition for transversality holds in one case if and only if it holds in the other. 
\end{proof}

\subsection{Basics on fold and cusp bifurcations}
We follow \cite{kuznetsov:2023} and define fold and cusp bifurcations in the system $\dot y = f(y, \beta)$ in terms of conditions on the Taylor expansion of $f$ about $p_0 = (y_0, \beta_0)$. We will also draw on material in \cite{golubitsky1985} in defining some of the conditions below. 

Assume that $f(p_0) = 0$, and define $J:= f_y(p_0)$. We always make the following basic assumption:
\begin{enumerate}
\item[B0.] $\ker J$ is one-dimensional. 
\end{enumerate}
Fix $0 \neq q \in\ker J$, and define the conditions:
\begin{enumerate}
\item[B1.] $p_0$ is a simply degenerate point of $f$ (equivalently, $J$ has one-dimensional centre subspace). 
\item[B2.] $B(q, q) \in \mathrm{im}\,J$;
\item[B3.] For some $i \in 1, \ldots, m$, $f_{\beta_i} \not \in \mathrm{im}\,J$; equivalently, $\mathrm{im}\,f_{\beta} \not \subseteq \mathrm{im}\,J$.
\end{enumerate}
Clearly B1 implies B0, and the condition B2 does not depend on the choice of $q$. Let B2${}^c$ refer to the negation of B2, namely the condition $B(q, q) \not \in \mathrm{im}\,J$. If B1 and B2${}^c$ hold, we say that $f$ has a {\bf nondegenerate fold bifurcation} at $p_0$. If, additionally, B3 holds, then $f$ has a {\bf nondegenerate fold bifurcation, unfolded by $\beta$}, at $p_0$. 

We will refer to a point satisfying B1 and B2 as a {\bf potential cusp point}. At any point satisfying B0 and B2, define:
\begin{enumerate}
\item[B4.] $C(q,q,q) - 3B(q, h) \not \in \mathrm{im}\,J$, where $h$ is any element of $J^{-1}(B(q,q))$.
\item[B5.] There exist $i, j \in \{1, \ldots, n\}$, such that for every $(K_1, K_2) \in \R^2\backslash\{0\}$, defining the differential operator $\mD := K_1 \partial_{\beta_i} + K_2\partial_{\beta_j}$, either (i) $\mD f \not \in \mathrm{im}\,J$; or (ii) $\mD f \in \mathrm{im}\,J$ and $\mD[f_y]q - B(q,h) \not \in \mathrm{im}\,J$, where $h$ is any element of $J^{-1}(\mD f)$.
\end{enumerate}
If B1, B2 and B4 hold, we say that $f$ has a {\bf nondegenerate cusp bifurcation} at $p_0$. If, additionally, B5 holds, we say that $f$ has a {\bf nondegenerate cusp bifurcation, unfolded by $\beta$,} at $p_0$.

\begin{remark}[basic bifurcation and nondegeneracy conditions]
Condition B1 is the basic bifurcation condition for a fold bifurcation, while B2${}^c$ is the nondegeneracy condition. Conditions B1 and B2 are the basic bifurcation conditions for a cusp bifurcation, while B4 is the nondegeneracy condition. 
\end{remark}

\begin{remark}[transversality conditions]
Conditions B3 and B5 are transversality conditions for nondegenerate fold and cusp bifurcations respectively. Condition B5 is more naturally stated via its negation: a pair of parameters $\beta_i, \beta_j$ {\em fails} to unfold a nondegenerate cusp bifurcation if and only if there exist $(K_1, K_2) \in \R^2\backslash\{0\}$ such that (with $\mD := K_1 \partial_{\beta_i} + K_2\partial_{\beta_j}$), (i) $\mD f \in \mathrm{im}\,J$; and (ii) $\mD[f_y]q - B(q,h) \in \mathrm{im}\,J$, where $h$ is any element of $J^{-1}(\mD f)$. While we have been unable to find condition B5 written down explicitly in this form, it can be derived either by examining computations in \cite{kuznetsov:2023} and \cite{golubitsky1985}; or by direct Taylor expansion of the vector field restricted to the parameter-dependent centre manifold near the bifurcation point.
\end{remark}

\begin{remark}[choice of preimages]
The particular choices of $h$ in Conditions B4 and B5 do not matter: assuming that $\ker J$ is one-dimensional, and B2 holds, given any vector $v \in \R^n$, and $h,h' \in J^{-1}v$, we have $h-h' = tq$ for some $t \in \R$ and hence $B(q,h) - B(q,h') = tB(q,q)\in \mathrm{im}\,J$. 
\end{remark}

\begin{remark}[geometric interpretation]
While the fold and cusp bifurcation conditions provide information on the local geometry of the zero-set of $f$ in $U \times V$ (the Cartesian product of state space and parameter space), the bifurcation conditions are not entirely geometrical: for example $(y,\beta) \mapsto \beta - y^2$ and $(y, \beta) \mapsto (\beta - y^2)^2$ have the same zero set in $\R^2$, but the first has a nondegenerate fold point at $(0,0)$, unfolded by the parameter $\beta$, while in the second case, the point $(0,0)$ is not a nondegenerate fold point, and the bifurcation at $(0,0)$ is not unfolded by $\beta$.
\end{remark}

Henceforth, if we refer to a fold (resp., cusp) bifurcation, without further qualification, this will mean a {\em nondegenerate} fold (resp., cusp) bifurcation, which is {\em unfolded} by the parameters of the system.

\subsection{Simplifying computations for fold and cusp bifurcations}
Before turning to the specific case of mass action networks, in the following four propositions, we list some simplifications which are useful for studying fold and cusp bifurcations in a general dynamical system. The proofs are matters of somewhat lengthy differentiation and so, in order not to distract from the main results, are outlined in Appendix~\ref{app:proofs}.

{\bf Changing coordinates.} As we would expect for any reasonably defined bifurcations, fold and cusp bifurcations are preserved under smooth, parameter-dependent, changes of coordinates. 

Let $U \subseteq \R^n,\,V \subseteq \R^m$ be open and let $f\colon U \times V \to \R^n$, $(y,\beta) \mapsto f(y,\beta)$ be smooth. Let $p_0:=(y_0, \beta_0) \in U \times V$ satisfy $f(p_0) = 0$. Given some $U' \subseteq \R^n$, consider any smooth map $\Psi: U' \times V \to U$ such that, for each $\beta \in V$, $\Psi(\cdot, \beta): U' \to U$ is a smooth diffeomorphism with inverse $\Psi^{-1}(\cdot, \beta)$. Define $\hat{f}(\hat{y},\beta):= \Psi^{-1}_y(\Psi(\hat{y},\beta ),\beta)f(\Psi(\hat{y},\beta),\beta)$. Define $\hat{y}_0:= \Psi^{-1}(y_0, \beta_0)$ and $\hat{p}_0:=(\hat{y}_0,\beta_0)$, so that $\hat{f}(\hat{p}_0) = 0$. 
\begin{proposition}
\label[proposition]{prop:coord}
Each condition B1--B5 holds for $f$ at $p_0$ if and only if the corresponding condition holds for $\hat{f}$ at $\hat{p}_0$. Consequently, the following are equivalent:
\begin{itemize}
\item[(a)] $f$ has a nondegenerate fold (resp., cusp) bifurcation unfolded by its parameters at $p_0$;
\item[(b)] $\hat{f}$ has a nondegenerate fold (resp., cusp) bifurcation unfolded by its parameters at $\hat{p}_0$.
\end{itemize}
\end{proposition}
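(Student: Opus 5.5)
Throughout, set $M(\hat y,\beta):=\Psi^{-1}_y(\Psi(\hat y,\beta),\beta)$, so that $\hat f(\hat y,\beta)=M(\hat y,\beta)\,f(\Psi(\hat y,\beta),\beta)$, and write $P:=\Psi_{\hat y}(\hat p_0)$. Then $M(\hat p_0)=P^{-1}$. The strategy is to Taylor-expand $\hat f$ at $\hat p_0$ and express $\hat J,\hat B,\hat C$ and $\hat f_\beta,\hat f_{y\beta}$ in terms of the corresponding objects for $f$ at $p_0$. Each such expression should reduce to a congruence $X\mapsto P^{-1}X$ plus correction terms that lie in $\mathrm{im}\,J$ or vanish because $f(p_0)=0$.

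The first-order step is as follows. Since $f(p_0)=0$, the derivative of $M$ never multiplies a nonzero value of $f$ at first order. Hence $\hat J=P^{-1}JP$ and $\hat f_\beta=P^{-1}(f_\beta+J\Psi_\beta)$. This similarity gives B0 and B1 immediately, since the spectra agree. It also gives $\ker\hat J=P^{-1}\ker J$ and $\mathrm{im}\,\hat J=P^{-1}\mathrm{im}\,J$. We choose $\hat q=P^{-1}q$. Since $J\Psi_\beta\in\mathrm{im}\,J$, B3 transfers directly.

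The second-order step is to differentiate $\hat f(\hat y_0+t\hat u,\beta_0)$ twice. Writing $\Psi(\hat y_0+t\hat u,\beta_0)=y_0+tP\hat u+\tfrac{t^2}{2}w+O(t^3)$ with $w=\Psi_{\hat y\hat y}(\hat u,\hat u)$, we get
\[
\hat B(\hat u,\hat u)=P^{-1}\bigl(B(P\hat u,P\hat u)+Jw\bigr)+2M'(\hat u)\,JP\hat u .
\]
Here $M'$ is the directional derivative of $M$. Setting $\hat u=\hat q$ kills the last term. So $\hat B(\hat q,\hat q)\in P^{-1}(B(q,q)+\mathrm{im}\,J)$, and B2 is equivalent for $f$ and $\hat f$.

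For B4 and B5 I would avoid expanding third derivatives in full. The plan is to use the invariant characterisation implicit in the conditions, namely to treat them as coefficients of the Lyapunov--Schmidt reduced scalar function. Choose $\ell$ with $\ker\ell=\mathrm{im}\,J$, and set $\hat\ell=\ell P$, so that $\ker\hat\ell=\mathrm{im}\,\hat J$. The quantity $\ell\bigl(C(q,q,q)-3B(q,h)\bigr)$ is the cubic coefficient $g_{xxx}$ of the reduced function $g(x,\beta)=\ell f(y_0+xq+W(x,\beta),\beta)$, where $W$ solves the complementary equations. Similarly, the B5 expression is the mixed coefficient $g_{x\beta}$ along $\mD$, given that $g_\beta=0$ along $\mD$. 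The task is then to show that the reduced functions of $f$ and $\hat f$ agree up to multiplication by a nonvanishing smooth factor $\rho(x,\beta)$ with $\rho(0,\beta_0)\ne 0$, together with a reparameterisation $x\mapsto\phi(x,\beta)$ of the kernel coordinate with $\phi_x\neq0$. This follows because the zero sets correspond under $\Psi$ and $M$ is invertible.

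Under $g=0=g_x=g_{xx}$, the identity $\hat g=\rho\,g\circ\phi$ gives $\hat g_{xxx}=\rho\,\phi_x^3\,g_{xxx}$, so B4 transfers. For the transversality condition, the vectors $(g_\beta,g_{x\beta})$ and $(\hat g_\beta,\hat g_{x\beta})$ are related by an invertible triangular transformation. Indeed, $\hat g_\beta=\rho g_\beta$, and $\hat g_{x\beta}=\rho\phi_x g_{x\beta}+(\ldots)g_\beta$, because $g_x=g_{xx}=0$ kills the terms involving $\phi_\beta$ and $\rho_x$. Hence the $2\times2$ minors over $(\beta_i,\beta_j)$ vanish simultaneously, and B5 transfers.

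The main obstacle is justifying the identification of B4 and B5 with these reduced-function derivatives, including the role of $h$, and establishing the contact-equivalence relation between $g$ and $\hat g$. For the latter, I would first try to prove it by expanding both reductions explicitly to third order, using the $\mathrm{im}\,J$-absorption argument above at each order. Failing that, I would fall back on a direct, lengthy differentiation of $\hat f$ to third order in $\hat y$ and first order in $\beta$, tracking that all extra terms lie in $\mathrm{im}\,\hat J$ after substituting $\hat h=P^{-1}(h-\Psi_{\hat y\hat y}(\hat q,\hat q))+t\hat q$.
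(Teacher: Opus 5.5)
Your first- and second-order computations are correct and coincide with the paper's Claims 1--3: $\hat{J}=P^{-1}JP$, $\hat f_\beta=P^{-1}(f_\beta+J\Psi_\beta)$ and $\hat B(\hat q,\hat q)=P^{-1}\bigl(B(q,q)+J\Psi_{\hat y\hat y}(\hat q,\hat q)\bigr)$. So B1--B3 are fine. Your identification of B4 with $g_{xxx}\neq0$, and of B5 with nonsingularity of the $2\times2$ matrix with rows $(g_{\beta_i},g_{\beta_j})$ and $(g_{x\beta_i},g_{x\beta_j})$, for the Lyapunov--Schmidt reduced function is also standard and correct.

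The gap is the step that carries all the weight for B4 and B5: the relation $\hat g=\rho\,(g\circ\phi)$ with $\rho(0,\beta_0)\neq0$ and $\phi_x(0,\beta_0)\neq0$. You flag it yourself as the main obstacle. The reason you give, that the zero sets correspond, cannot work, because zero sets do not determine B2--B5. The paper's remark on $\beta-y^2$ and $(\beta-y^2)^2$ gives two functions with the same zero set that differ in B2 and B3 and are not related by any such $\rho,\phi$. What is needed is the algebraic identity $\hat f=M\,(f\circ\Psi)$, together with a lemma that Lyapunov--Schmidt reduction turns this contact equivalence into one between the reduced scalar functions. That lemma is not automatic. $\Psi$ does not carry the reduction curve of $\hat f$ to that of $f$, since the projected equation for $\hat f$ involves $M(\hat y,\beta)$ rather than $M(\hat p_0)$. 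One proof runs as follows. By Hadamard's lemma, write $f(y_0+xq+w,\beta)=T(x,w,\beta)\bigl(g(x,\beta),\,w-W(x,\beta)\bigr)$ with $T$ invertible, and do the same for $\hat f$. Compose the resulting equivalences, then restrict to the graph on which the second component vanishes. Use $g_x=0$ at the base point to check $\rho\neq0$ and $\phi_x\neq0$. As written, B4 and B5 are not established. If you supply this lemma, your route is a valid and more conceptual alternative, but it costs more than the direct computation.

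Your fallback is the paper's route, and it is short if you differentiate $f(\Psi(\hat y,\beta),\beta)=\Psi_{\hat y}(\hat y,\beta)\,\hat f(\hat y,\beta)$ instead of $\hat f=M\,(f\circ\Psi)$. Then no derivatives of the inverse matrix appear, and every term containing $\hat f$ or $\hat{J}\hat q$ drops out. Set $h:=P\hat h-\Psi_{\hat y\hat y}(\hat q,\hat q)$. Equivalently $\hat h=P^{-1}\bigl(h+\Psi_{\hat y\hat y}(\hat q,\hat q)\bigr)+t\hat q$; your formula has the wrong sign here. The terms $3\Psi_{\hat y\hat y}(\hat q,\hat B(\hat q,\hat q))$ arising from the third derivative and from $3B(q,h)$ then cancel, and
\[
C(q,q,q)-3B(q,h)=P\bigl(\hat C(\hat q,\hat q,\hat q)-3\hat B(\hat q,\hat h)\bigr)-J\bigl(\Psi_{\hat y\hat y\hat y}(\hat q,\hat q,\hat q)-3\Psi_{\hat y\hat y}(\hat q,\hat h)\bigr).
\]
Similarly, with $h:=P\hat h-\mathcal{D}\Psi$,
\[
\mathcal{D}[f_y]q-B(q,h)=P\bigl(\mathcal{D}[\hat f_{\hat y}]\hat q-\hat B(\hat q,\hat h)\bigr)-J\bigl(\mathcal{D}[\Psi_{\hat y}]\hat q-\Psi_{\hat y\hat y}(\hat q,\hat h)\bigr).
\]
Since $\mathrm{im}\,\hat{J}=P^{-1}\mathrm{im}\,J$, B4 and B5 transfer immediately, with no contact-equivalence lemma needed.
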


{\bf Eliminating variables.}
The next proposition formalises the fact that, when studying fold and cusp bifurcations, we can use the implicit function theorem to eliminate variables (locally) in the spirit of Lyapunov-Schmidt reduction \cite{golubitsky1985}. Let $U_1 \subseteq \R^{n_1}$, $U_2 \subseteq \R^{n_2}$, $U_3 \subseteq \R^m$ be open, with $\hat{f}\colon U_1 \times U_2 \times U_3 \to \R^{n_1}$, $\hat{g}\colon U_1 \times U_2 \times U_3 \to \R^{n_2}$ smooth. Define $\wF:=(\hat{f},\hat{g})$. Assume that $p_0:=(x_0, y_0, \beta_0) \in U_1 \times U_2 \times U_3$ satisfies $\wF(p_0) = 0$, namely,
\begin{equation}
\label{eqpreproj}
\hat{f}(p_0)=0, \quad \hat{g}(p_0)=0\,.
\end{equation}
Now assume that $\hat{g}_y(p_0)$ is nonsingular, so that applying the implicit function theorem, there exists a smooth function $\phi(x,\beta)$ defined on some neighbourhood of $(x_0, \beta_0)$, with $y_0 = \phi(x_0, \beta_0)$ and $\hat{g}(x,\phi(x,\beta),\beta)\equiv 0$ on some neighbourhood $W$ of $p_0$. On $W$, define
\begin{equation}
\label{eqpostproj}
f(x,\beta):=\hat{f}(x,\phi(x,\beta), \beta), \quad g(x,y,\beta):= y - \phi(x,\beta)\, \quad \mbox{and} \quad F:= (f,g)\,,
\end{equation}
so that $F(p_0) = 0$. Assume, henceforth, that all derivatives of $F$ and $\wF$ are evaluated at $p_0$, while derivatives of $f$ are evaluated at $p_0':=(x_0, \beta_0)$. Define $\wJ := J^{(\wF)}$ and $J := J^{(F)}$. We easily confirm that $\dim \ker J = \dim \ker \wJ = \dim \ker J^{(f)}$. Assume that $\ker \wJ$ is one-dimensional, equivalently $\ker J$ is one-dimensional, equivalently $\ker J^{(f)}$ is one-dimensional. 
\begin{proposition}
\label[proposition]{prop:elimination}
Each condition B2--B5 holds for $\wF$ at $p_0$ if and only if the corresponding condition holds for $F$ at $p_0$ if and only if the corresponding condition holds for $f$ at $p_0'$. Consequently, if $p_0$ is a simply degenerate point of both $\wF$ and $F$, the following are equivalent:
\begin{itemize}
\item[(a)] $\wF$ has a nondegenerate fold (resp., cusp) bifurcation at $p_0$, unfolded by $\beta$;
\item[(b)] $F$ has a nondegenerate fold (resp., cusp) bifurcation at $p_0$, unfolded by $\beta$;
\item[(c)] $f$ has a nondegenerate fold (resp., cusp) bifurcation at $p_0'$, unfolded by $\beta$.
\end{itemize}
\end{proposition}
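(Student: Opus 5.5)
Two reductions will do the work: a coordinate change passes from $\wF$ to $F$, and the triangular structure of $F$ passes from $F$ to $f$. Throughout I write $n_1,n_2$ as in the setup and use the block structure of the Jacobians.

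\emph{Step 1: from $\wF$ to $F$.} Define $\Psi(x,z,\beta):=(x,\,z+\phi(x,\beta))$. For fixed $\beta$ this is a diffeomorphism on a neighbourhood of $(x_0,0)$, and in the new coordinates $(x,z)$ we have $z=y-\phi(x,\beta)$. By \Cref{prop:coord}, conditions B2--B5 hold for $\wF$ at $p_0$ if and only if they hold for the transformed field $\Psi_{(x,z)}^{-1}\wF\circ\Psi$ at $(x_0,0,\beta_0)$. This field is not literally $F$, so I will relate them by a smooth, parameter-dependent invertible matrix multiplier $M(x,y,\beta)$ with $\wF = M F$ near $p_0$.

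To build $M$, apply Hadamard's lemma to $\hat g$ in the variable $y$ around $\phi(x,\beta)$:
\[
\hat g(x,y,\beta)=G(x,y,\beta)\,(y-\phi(x,\beta)),
\]
with $G(p_0)=\hat g_y(p_0)$ nonsingular. Similarly,
\[
\hat f(x,y,\beta)=f(x,\beta)+H(x,y,\beta)\,(y-\phi(x,\beta)).
\]
Hence $\wF = M F$ with $M=\begin{pmatrix}I & H\\ 0 & G\end{pmatrix}$, which is invertible near $p_0$.

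It then remains to prove a small lemma: B2--B5 are invariant under $F\mapsto MF$ for smooth invertible $M$ at a zero of $F$. Using $F(p_0)=0$, the derivatives transform as
\[
J\mapsto MJ,\qquad B(u,v)\mapsto MB(u,v)+M_y[u]Jv+M_y[v]Ju,
\]
with analogous formulas for $C$, $\mD f$ and $\mD[f_y]$. The kernel of $J$ is unchanged and the image becomes $M\,\mathrm{im}\,J$. The extra terms fall into $M\,\mathrm{im}\,J$ once we use $Jq=0$, $Jh=B(q,q)$ (respectively $Jh=\mD f$), and the admissible change of preimage $h\mapsto M^{-1}$-adjusted $h$.

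\emph{Step 2: from $F$ to $f$.} Here $F=(f(x,\beta),\,y-\phi(x,\beta))$, so
\[
J=\begin{pmatrix}J^{(f)}&0\\-\phi_x&I\end{pmatrix},\qquad \mathrm{im}\,J=\mathrm{im}\,J^{(f)}\times\R^{n_2}.
\]
The kernel vector is $q=(q_x,\phi_xq_x)$ with $q_x\in\ker J^{(f)}$. Every membership test of the form ``$\cdot\in\mathrm{im}\,J$'' therefore reduces to testing the first $n_1$ components against $\mathrm{im}\,J^{(f)}$.

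The first block of $F$ depends only on $x$, so its higher derivatives in any direction $(u_x,u_y)$ reduce to those of $f$ in direction $u_x$. For the preimages $h$ in B4 and B5, take $h=(h_x,\ast)$ with $J^{(f)}h_x$ equal to the relevant first block, the second block chosen to solve the second row. Then $B^{(F)}(q,h)$ has first block $B^{(f)}(q_x,h_x)$. Likewise the first block of $\mD F$ is $\mD f$, and the first block of $\mD[F_y]q$ is $\mD[f_y]q_x$. Thus each of B2--B5 for $F$ coincides exactly with the same condition for $f$.

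\emph{Equivalence of (a)--(c).} Under the simple-degeneracy hypothesis, B1 holds for $\wF$ and $F$ by assumption. Simple degeneracy of $f$ then follows: the spectrum of the block-triangular $J$ is that of $J^{(f)}$ together with ones, so B1 holds for $f$ as well. Combined with Steps 1 and 2, this gives the equivalence of (a), (b) and (c).

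\emph{Main obstacle.} I expect the hardest part to be the multiplier lemma for B4 and B5, where second derivatives of $M$ and cross terms with $h$ appear. For this I would first try to reduce to \Cref{prop:coord}-style bookkeeping, checking term by term that every extra contribution is of the form $M'[\cdot]J(\cdot)$ or $M'[\cdot]F$, and hence vanishes or lies in $M\,\mathrm{im}\,J$. If that becomes messy, I would instead avoid $M$ entirely and verify Step 1 directly by implicit differentiation of $\hat g(x,\phi,\beta)\equiv0$, expressing the derivatives of $f$ through those of $\wF$.
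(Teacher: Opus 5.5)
Your proposal is correct. Step 2, together with your argument for B1 via the spectrum of the block-triangular Jacobian, is essentially the paper's second half (its Claims 5--10).

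Step 1 takes a genuinely different route. The paper passes from $\wF$ to $F$ by implicitly differentiating $\hat g(x,\phi(x,\beta),\beta)\equiv 0$. This gives identities such as $J = M\wJ$, $B^{(F)}(q,q)=MB^{(\wF)}(q,q)$ and $F_{\beta_i}=M\wF_{\beta_i}$, with the constant matrix $M=\begin{pmatrix} I & -\hat f_y\hat g_y^{-1}\\ 0 & \hat g_y^{-1}\end{pmatrix}$. The paper only asserts that the B4 and B5 identities follow ``by the same template'', which in practice needs third-order and mixed implicit differentiation of $\phi$.

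Your Hadamard factorisation $\wF = M(x,y,\beta)F$ holds exactly on a neighbourhood of $p_0$, and $M(p_0)=\begin{pmatrix} I & \hat f_y\\ 0 & \hat g_y\end{pmatrix}$ is the inverse of the paper's constant matrix. It reduces the whole step to a prefactor lemma, and that lemma is precisely \Cref{prop:prefactor} with no $\gamma$. Since \Cref{prop:prefactor} is proved independently of \Cref{prop:elimination}, you can simply cite it. This is cleaner than the paper's route and removes the computational burden of the B4/B5 cases. The only costs are mild: the neighbourhood must be convex in $y$ around the graph of $\phi$, and you lose one derivative in finite regularity. Neither matters here.

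Some corrections to your sketch:
\begin{itemize}
\item The detour through $\Psi$ and \Cref{prop:coord} is unnecessary. The relation $\wF = MF$ already compares the two fields in the same coordinates.
\item The extra terms in the multiplier lemma do not individually lie in $M\,\mathrm{im}\,J$; they cancel exactly. Write $M'[u]$ for the derivative of $M$ at $p_0$ in state direction $u$. The term $3M'[q]B^{(F)}(q,q)$ in $C^{(\wF)}(q,q,q)$ cancels against $3M'[q]Jh$ in $3B^{(\wF)}(q,h)$. Likewise $M'[q]\mD F$ appears in both $\mD[\partial_{(x,y)}\wF]q$ and $B^{(\wF)}(q,\tilde h)$ and cancels. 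Hence $C^{(\wF)}(q,q,q)-3B^{(\wF)}(q,h)=M(p_0)\bigl[C^{(F)}(q,q,q)-3B^{(F)}(q,h)\bigr]$, and similarly for B5.
\item No adjustment of the preimage $h$ is needed, because $\wJ^{-1}(B^{(\wF)}(q,q))=J^{-1}(B^{(F)}(q,q))$ and $\wJ^{-1}(\mD\wF)=J^{-1}(\mD F)$ hold as sets.
\end{itemize}
So the ``main obstacle'' you anticipate does not materialise.
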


We remark that eliminating variables, effectively allowing us to work with a {\em projected} system of equations, makes the analysis of fold and cusp bifurcations simpler than many other bifurcations. Moreover, we will see that elimination becomes a particularly natural strategy when we work with the alternative equations for certain classes of networks, termed $\mP$-toric networks and discussed in Section~\ref{seccorollaries} below.

{\bf Applying linear mappings.} We are sometimes interested in applying to a vector field at each point some nonsingular parameter- and state-dependent linear map in order to simplify the equations. As a particular example, we frequently wish to clear denominators in a system of rational equations. Another example involves a canonical change of coordinates described in Section~\ref{secnatural}, leading to linear, parameter-dependent, factor in the differential equations which the next result allows us to disregard. 

Let $U \subseteq \R^n$, $V \subseteq \R^k$, $W \subseteq \R^s$ be open, and let $f \colon U \times V \to \R^n$ and $\hat{f} \colon U \times V \times W \to \R^n$ be smooth, with 
\[
\hat{f}(y,\beta,\gamma) = M(y,\beta,\gamma) f(y,\beta) \,, 
\]
where $M \colon U \times V \times W \to \R^{n \times n}$ is smooth (i.e., $M$ defines a family of square matrices smoothly dependent only $(y,\beta,\gamma)$). Let $\hat{p}_0:= (y_0, \beta_0, \gamma_0)$ and $p_0:= (y_0, \beta_0)$. Assume that $f(p_0) = 0$, hence, $\hat{f}(\hat{p}_0) = 0$; and that $M_0:= M(\hat{p}_0)$ is nonsingular. Clearly $\dim \ker J^{(f)} = \dim \ker J^{(\hat{f})}$. Assume that $\ker J^{(f)}$ is one-dimensional, equivalently, $\ker J^{(\hat{f})}$ is one-dimensional.

\begin{proposition}
\label[proposition]{prop:prefactor}
Each condition B2--B5 holds for $f$ at $p_0$ if and only if the corresponding condition holds for $\hat{f}(\cdot, \cdot, \gamma_0)$ at $p_0$, if and only if the corresponding condition holds for $\hat{f}$ at $\hat{p}_0$. Consequently, if $f$ and $\hat{f}$ have simply degenerate points at $p_0$ and $\hat{p}_0$ respectively, then the following are equivalent:
\begin{itemize}
\item[(a)] $f$ has a nondegenerate fold (resp., cusp) bifurcation unfolded by its parameters at $p_0$;
\item[(b)] $\hat{f}$ has a nondegenerate fold (resp., cusp) bifurcation unfolded by its parameters $\beta$ at $\hat{p}_0$.
\end{itemize}
\end{proposition}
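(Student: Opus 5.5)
The plan is to verify each condition B2--B5 by direct Taylor expansion of $\hat f = M f$ at $\hat p_0$. Throughout we use $f(p_0)=0$ and $Jq=0$ to kill terms, and we choose the auxiliary preimages $h$ in B4 and B5 to be the \emph{same} vector for $f$ and $\hat f$. Write $J:=f_y(p_0)$, $M_0:=M(\hat p_0)$, and $M_y(u):=\partial_y M[u]$ for the derivative of $M$ in direction $u$. Since $f(p_0)=0$, we get $\hat J = M_0 J$. Because $M_0$ is nonsingular, $\ker \hat J=\ker J$ (so we may use the same $q$) and $\mathrm{im}\,\hat J = M_0\,\mathrm{im}\,J$. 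Hence, for any $v$, $M_0 v\in \mathrm{im}\,\hat J$ if and only if $v\in\mathrm{im}\,J$. The whole proof then reduces to showing that each vector tested in B2--B5 for $\hat f$ equals $M_0$ times the corresponding vector for $f$, modulo $\mathrm{im}\,\hat J$.

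\emph{Second and third derivatives.} Differentiating the product gives
\[
\hat B(u,v)=M_0B(u,v)+M_y(u)Jv+M_y(v)Ju,
\]
so $\hat B(q,q)=M_0B(q,q)$, which settles B2. For the cubic term, every term containing $f(p_0)$ or $Jq$ vanishes, leaving
\[
\hat C(q,q,q)=M_0C(q,q,q)+3M_y(q)B(q,q).
\]
If $Jh=B(q,q)$, then $\hat J h=M_0B(q,q)=\hat B(q,q)$, so the same $h$ serves for $\hat f$. Then
\[
\hat B(q,h)=M_0B(q,h)+M_y(q)B(q,q),
\]
and the extra terms cancel exactly:
\[
\hat C(q,q,q)-3\hat B(q,h)=M_0\big(C(q,q,q)-3B(q,h)\big).
\]
This settles B4.

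\emph{Parameter derivatives (B3).} Because $f(p_0)=0$, we have $\hat f_\beta=M_0f_\beta$ and $\hat f_\gamma=M_\gamma f=0$. So B3 holds for $\hat f$ (with any parameter set) if and only if it holds for $f$; the $\gamma$-directions never help.

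\emph{Transversality (B5).} For B5, take $\mD=K_1\partial_{\beta_i}+K_2\partial_{\beta_j}$. Then $\mD\hat f=M_0\mD f$. Expanding $\mD[\hat f_y]q=\mD[M_y(q)f+Mf_yq]$ and dropping terms containing $f$ or $Jq$ gives
\[
\mD[\hat f_y]q=M_y(q)\mD f+M_0\mD[f_y]q.
\]
If $\mD f \in \mathrm{im}\,J$, choose $h$ with $Jh=\mD f$. This $h$ also satisfies $\hat J h=\mD\hat f$, and
\[
\hat B(q,h)=M_0B(q,h)+M_y(q)\mD f.
\]
Hence $\mD[\hat f_y]q-\hat B(q,h)=M_0\big(\mD[f_y]q-B(q,h)\big)$, and B5 transfers for $\beta$-pairs.

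The step needing most care is the version of B5 for $\hat f$ at $\hat p_0$ with the enlarged parameter set $(\beta,\gamma)$. We must show that pairs involving some $\gamma_j$ can never unfold the cusp, so that B5 for $\hat f$ is witnessed only by $\beta$-pairs. Take $\mD$ to have only a $\gamma$-component (say $K_1=0$ when pairing $\beta_i$ with $\gamma_j$). Then $\mD\hat f=0$, so $h\in\ker J$, i.e.\ $h=tq$. Also $\mD[\hat f_y]q=\partial_\gamma M\,Jq+M_\gamma$-terms times $f=0$. Therefore the tested vector is $-t\hat B(q,q)=-tM_0B(q,q)$, which lies in $\mathrm{im}\,\hat J$ by B2. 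Thus every pair containing a $\gamma$-index fails, and B5 for $\hat f$ reduces to B5 for $\hat f(\cdot,\cdot,\gamma_0)$, which is equivalent to B5 for $f$.

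Finally, B1 is assumed for both $f$ and $\hat f$ in the "consequently" part. Combining this with the equivalences of B2--B5 gives (a)$\Leftrightarrow$(b).
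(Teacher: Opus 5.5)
Your proposal is correct and follows essentially the same route as the paper's sketch. Both show $\hat{J}=M_0J$, then verify that each tested vector for $\hat f$ equals $M_0$ times the corresponding vector for $f$, using $f(p_0)=0$, $Jq=0$ and the same preimage $h$. Both then dispose of pairs involving some $\gamma_j$ in B5 by taking $\mD=\partial_{\gamma_j}$, so that $\mD\hat f=0$. The only cosmetic difference is that the paper takes $\tilde h=0$ there, whereas you take $h=tq$ and invoke B2, which is equivalent.
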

We remark on the particular consequence of \Cref{prop:prefactor} that the parameters $\gamma$ can play no part in unfolding fold and cusp bifurcations in $\hat{f}(y,\beta,\gamma)$.

{\bf Applying a diffeomorphism to both sides of equations.} Our final example of an operation which can help simplify an equation prior to analysis, involves applying a local diffeomorphism to both sides of the equation. For example, raising both sides to a positive power is an operation which can help simplify ``posynomial equations'', namely, generalised polynomial equations with real powers \cite{boyd2004convex}, which can arise as the alternative equations constructed below, see also \cite[Lemma 3.17]{banajifeliu26}.

Let $U \subseteq \R^n$, $V \subseteq \R^m$ be open, let $F,G \colon U \times V \to \R^n$ be smooth, and define 
\[
f(y,\beta)=F(y,\beta)-G(y,\beta)\,. 
\]
Let $p_0 = (y_0, \beta_0) \in U \times V$ satisfy $f(p_0) = 0$, and define $y_0' = F(p_0) = G(p_0)$. Given some open neighbourhood $U' \subseteq \R^n$ of $y_0'$, let $\Psi \colon U' \times V \to \R^n$ be smooth with $M_0:=\Psi_y(y_0',\beta_0)$ invertible. Define $\hat{f} \colon F^{-1}(U') \cap G^{-1}(U') \to \R^n$ by 
\[
\hat{f}(y,\beta) = \Psi(F(y,\beta),\beta) - \Psi(G(y,\beta),\beta)\,.
\]
Clearly, $J^{(\hat{f})} = M_0J^{(f)}$, hence $\ker J^{(\hat{f})} = \ker J^{(f)}$. Assume that $\ker J^{(f)}$ is one-dimensional, equivalently, $\ker J^{(\hat{f})}$ is one-dimensional.

\begin{proposition}
  \label[proposition]{prop:power}
Each condition B2--B5 holds for $\hat{f}$ at $p_0$ if and only if the corresponding condition holds for $f$. Consequently, if $f$ and $\hat{f}$ both have simply degenerate points at $p_0$, then the following are equivalent:
\begin{itemize}
\item[(a)] $f$ has a nondegenerate fold (resp., cusp) bifurcation unfolded by its parameters at $p_0$;
\item[(b)] $\hat{f}$ has a nondegenerate fold (resp., cusp) bifurcation unfolded by its parameters at $p_0$.
\end{itemize}
\end{proposition}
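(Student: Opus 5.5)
Write $a:=F(p_0)=G(p_0)$ and $M_0:=\Psi_y(a,\beta_0)$. My plan is to reduce \Cref{prop:power} to \Cref{prop:prefactor} through an exact integral identity, rather than comparing Taylor coefficients by hand. By the fundamental theorem of calculus,
\[
\hat{f}(y,\beta)=\Psi(F,\beta)-\Psi(G,\beta)=\left(\int_0^1 \Psi_y\big(G+s(F-G),\beta\big)\,ds\right)\,(F-G)=: M(y,\beta)\,f(y,\beta),
\]
where $F,G$ are evaluated at $(y,\beta)$. This holds on a neighbourhood of $p_0$ small enough that the segment between $F(y,\beta)$ and $G(y,\beta)$ stays in $U'$. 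This is possible because both endpoints tend to $a$, so I shrink to a convex ball around $a$ inside $U'$.

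The matrix-valued map $M$ is smooth in $(y,\beta)$, since we may differentiate under the integral. At $p_0$ it equals $\int_0^1\Psi_y(a,\beta_0)\,ds=M_0$, which is invertible by hypothesis. Therefore $\hat{f}=M f$ is exactly the setting of \Cref{prop:prefactor}, with no extra parameters $\gamma$ (or with $W$ a trivial factor). Note that $M$ depends on both $y$ and $\beta$, which that proposition allows. \Cref{prop:prefactor} then gives at once that each of B2--B5 holds for $f$ at $p_0$ if and only if it holds for $\hat{f}$ at $p_0$.

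For the ``consequently'' part, I assume $p_0$ is simply degenerate for both maps, so B1 holds for both. Nondegenerate fold unfolded by $\beta$ means B1, B2${}^c$ and B3. Cusp means B1, B2, B4 and B5. All of these then transfer between $f$ and $\hat f$, giving (a)$\Leftrightarrow$(b).

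The only real obstacle is the smoothness and well-definedness of $M$ near $p_0$, that is, the convexity and domain issue above. It is routine once we restrict to a small ball. If \Cref{prop:prefactor} requires $M$ to be defined on a full product set $U\times V$, I would simply restrict all maps to a product neighbourhood of $p_0$. This is harmless because conditions B1--B5 involve only derivatives at $p_0$.
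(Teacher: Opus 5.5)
Your argument is correct, but it takes a different route from the paper's.

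\textbf{The two routes.} The paper proves \Cref{prop:power} by direct chain-rule computation, in parallel with the proof of \Cref{prop:prefactor}. Using only $F(p_0)=G(p_0)$ and $F_yq=G_yq$ (from $Jq=0$), it checks that $\hat f_y$, $B^{(\hat f)}(q,q)$, $\hat f_{\beta_i}$, $C^{(\hat f)}(q,q,q)-3B^{(\hat f)}(q,h)$ and $\mD[\hat f_y]q-B^{(\hat f)}(q,\tilde h)$ are $M_0$ times the corresponding quantities for $f$. You instead obtain the exact factorisation $\hat f=Mf$ from the integral mean value theorem, note $M(p_0)=M_0$, and invoke \Cref{prop:prefactor} as a black box. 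This makes \Cref{prop:power} a corollary of \Cref{prop:prefactor}. The only new work is the local segment and smoothness bookkeeping, which you handle correctly; restricting to a product neighbourhood is harmless, since B1--B5 only see derivatives at $p_0$.

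\textbf{What your route buys.} The second-order cross terms take care of themselves. Along $y_0+tq$ one has $\nabla_qM(p_0)=\Psi_{yy}(y_0',\beta_0)(F_yq,\cdot)$. So the terms $[\nabla_qM]B^{(f)}(q,q)$ and $[\nabla_qM]\mD f$, already cancelled inside \Cref{prop:prefactor}, are exactly the terms $\Psi_{yy}(F_yq,B^{(f)}(q,q))$ and $\Psi_{yy}(F_yq,\mD f)$ that a direct computation produces.

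This matters because the paper's sketch states $C^{(\hat f)}(q,q,q)=M_0C^{(f)}(q,q,q)$ and $B^{(\hat f)}(q,h)=M_0B^{(f)}(q,h)$ as separate identities. In fact they carry the extra terms $3\Psi_{yy}(F_yq,B^{(f)}(q,q))$ and $\Psi_{yy}(F_yq,B^{(f)}(q,q))$ respectively. These cancel only in the combination $C^{(\hat f)}(q,q,q)-3B^{(\hat f)}(q,h)$. The combined identity, and hence the result, is correct, and your reduction makes this cancellation automatic.

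\textbf{What the direct route buys.} It needs no auxiliary construction and gives explicit formulas. Your route costs one derivative of smoothness, since $M$ is only as smooth as $\Psi_y$; this is immaterial in the smooth setting of the paper.
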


With these preliminary results in place, we are in a position to approach mass action networks. 

\section{Mass action systems: original and alternative equations}
\label{secMA}

We recall and build on some material from \cite{BBH2024smallbif,banajifeliu26}, with some notation altered for readability and for consistency with the treatment here. 

\subsection{Basic definitions on mass action networks}
A chemical reaction converts a formal linear combination of chemical species, termed the {\bf source} of the reaction, into another termed the {\bf product} of the reaction. A mass action network will refer to a system of chemical reactions with mass action kinetics. The ordinary differential equation (ODE) system associated with any mass action network involving $n$ species and $m$ reactions takes the form
\begin{equation}
\label{eqODE}
\dot x = g(x,\k):=\Gamma(\k \circ x^A)
\end{equation}
where $x \in \mathbb{R}^n_{\geq 0}$ is the vector of species concentrations; $\Gamma \in \mathbb{Z}^{n \times m}$ is the {\bf stoichiometric matrix} whose $(i,j)$th entry is the net production of species $i$ in reaction $j$; $A \in \mathbb{Z}_{\geq 0}^{m \times n}$ is the {\bf exponent matrix} whose $(i,j)$th entry is the stoichiometry of species $j$ in the source of the $i$th reaction; $x^A$ denotes the vector of $m$ monomials whose $i$th entry is $\prod_{j=1}^n x_j^{a_{ij}}$; and $\k \in \mathbb{R}^m_+$ is a vector of {\bf rate constants}. The {\bf rank} of the network is the rank of $\Gamma$, and we refer to an $n$-species, $m$-reaction network with rank $r$ as an $(n,m,r)$ network. When $r=n$, we refer to the network as a {\bf full-rank} network. The network is termed {\bf quadratic} if sources are at most bimolecular, equivalently, row sums of $A$ are at most $2$, namely, all polynomials in $x^A$ are at most quadratic. The network is {\bf bimolecular} if both sources and products are at most bimolecular, namely row sums of $A$ and $\Gamma + A^\top$ are all at most $2$. 

For later use, we note the form of the Jacobian matrices,
\begin{equation}
\label{eqJstarg}
g_x(x_0, \k_0) = \Gamma D_{\k_0 \circ x_0^A} A D_{1/x_0} \quad \mbox{and} \quad g_{\k}(x_0, \k_0) = \Gamma D_{\k_0 \circ x_0^A} D_{1/\k_0}\,.
\end{equation}

We fix an arbitrary $(n,m,r)$ network with stoichiometric matrix $\Gamma$ and exponent matrix $A$. Given $x \in \R^n_+$, the set $(x+\mathrm{im}\,\Gamma) \cap \R^n_+$ is termed the {\bf positive stoichiometric class} of $x$. Positive stoichiometric classes are easily seen to be locally invariant under \eqref{eqODE}, and when $r<n$ are of the form
\begin{equation}\label{eq:Z1}
S_K:= \{x\in \R^n_+\,:\,Z x -K=0\}\,,
\end{equation}
where $Z\in \R^{(n-r) \times n}$ is a {\bf matrix of conservation laws} whose rows form a basis of $\ker\Gamma^\top$, and $K\in \R^{n-r}$ is a vector of real constants. Note that $Z$ is not uniquely defined; however, the notation $S_K$ assumes that a choice of $Z$ has been made. For given $\k \in \R^m_+$ and $K \in \R^{n-r}$, positive equilibria on $S_K$ are the solutions in $\R^n_+$ to 
\[
g(x,\k) = 0, \quad Zx-K = 0\,.
\]
Let $(x_0, \k_0) \in \R^n_+ \times \R^m_+$ satisfy $g(x_0,\k_0) = 0$. When considering local bifurcations at $(x_0, \k_0)$, we are interested in the system $\dot x = g(x,\k)$ {\em restricted} to the positive stoichiometric class of $x_0$, namely to $S_K$ where $K = Zx_0$. We could, if desired, choose coordinates on $\mathrm{im}\,\Gamma$, and write down this restricted system explicitly in terms of these coordinates. We will not need to do this, but will sometimes write $\tilde{g}_K\colon S_K \times \R^m_+ \to \mathrm{im}\,\Gamma$ to denote the corresponding restricted vector field. We will also abuse terminology and write that \eqref{eqODE} has some bifurcation at $(x_0, \k_0)$ to mean that $\dot x = \tilde{g}_K(x,\k)$ ($K= Zx_0$) has this bifurcation. In terms of Taylor coefficients, restricting attention to $S_K$ means that we are interested in the action of the derivative $J^{(g)}:=g_x(x_0,\k_0)$ on $\mathrm{im}\,\Gamma$, and more generally in the action of the $k$th derivative of $g$ w.r.t. $x$, evaluated at $(x_0, \k_0)$, on 
\[
\underbrace{\mathrm{im}\,\Gamma \times \cdots \times \mathrm{im}\,\Gamma}_{k \text{ times}}\,.
\]
In particular, we say that $x_0$ is a {\bf degenerate} equilibrium of \eqref{eqODE} if $\ker (\tilde{g}_K)_x(x_0, \k_0) = \ker J^{(g)} \cap \mathrm{im}\,\Gamma$ is nontrivial; and $x_0$ is {\bf simply degenerate} if $(\tilde{g}_K)_x(x_0, \k_0)$ has a zero eigenvalue of algebraic multiplicity $1$, and no other eigenvalues on the imaginary axis. We refer to a network as {\bf degenerate} if all of its positive equilibria are degenerate. 

\begin{remark}[the dimension of $\ker J^{(g)}$]
\label[remark]{simplydegen}
We make a general observation about $J^{(g)}$ evaluated at any equilibrium at which $\ker J^{(g)} \cap \mathrm{im}\,\Gamma$ is one-dimensional. At such a point $J^{(g)}$ may have rank $r$ or $r-1$. In the case that $\mathrm{rank}\,J^{(g)} = r$, we have that $\ker J^{(g)}$ and $\mathrm{im}\,\Gamma$ are $n-r$ and $r$ dimensional subspaces of $\R^n$ with one-dimensional intersection, hence $\ker J^{(g)} + \mathrm{im}\,\Gamma$ does not span $\R^n$; if $\mathrm{rank}\,J^{(g)} = r-1$, $\ker J^{(g)}$ and $\mathrm{im}\,\Gamma$ are $n-r+1$ and $r$ dimensional subspaces of $\R^n$ with one-dimensional intersection and hence $\ker J^{(g)} + \mathrm{im}\,\Gamma$ spans $\R^n$. (This also makes it clear why $\mathrm{rank}\,J^{(g)}$ can be no less than $r-1$.)
\end{remark}

\subsection{Partitions and matrices associated with a network}
A ``{\bf $p$-partition}'' $\mP$ of $\{1, \ldots, m\}$ will mean a set of subsets $P_1, \ldots, P_p$ of $\{1, \ldots, m\}$ such that $\{1, \ldots, m\} = \sqcup_{i=0}^p P_i$. Associated with each block $P_i$ of $\mP$ is the coordinate hyperplane $(\R^n)_{P_i} = \{x \in \R^n\,:\,x_j = 0\,\,\mbox{for all}\,\, j \not \in P_i\}$. Given $X \subseteq \mathbb{R}^m$ or $X \in \R^m$, we write $X_{P_i}$ for the orthogonal projection of $X$ onto $(\R^n)_{P_i}$; and $\bm{1}_\mP$ will denote the $m \times p$ matrix whose $i$th column is $\bm{1}_{P_i}$. Any linear subspace $\mH \subseteq \R^m$ is said to {\bf factor over $\mP$} if it has a direct sum decomposition of the form 
\[
\mH = \mH_{P_1} \oplus \cdots \oplus \mH_{P_p}\,.
\]
We say that the reaction network {\bf factors over} $\mP$ if $\ker \Gamma$ factors over $\mP$. Observe that if a reaction network factors over $\mP$, then $\Gamma z = 0 \Rightarrow \Gamma ((\bm{1}_{\mP}t) \circ z) = 0$ for any $t \in \R^p$. In particular, if $\Gamma ((\bm{1}_{\mP}t) \circ z) = 0$ for some $t \in \R^p_+$, then $\Gamma z = 0$, an observation we will use frequently.

From now on, we assume that our fixed $(n,m,r)$ network of interest factors over a $p$-partition $\mP$ (note that $\mP$ may be the trivial partition, so this assumption is without loss of generality). Define
\[
s_{\mP}:=\mathrm{rank}\,[A\,|\,\bm{1}_\mP] - p\,.
\]
Clearly $s_{\mP} \leq n$ and $s_{\mP} \leq m-p$. Networks with $s_{\mP} < r$ are termed {\bf $\mP$-overdetermined} and, in \cite[Theorem~4.2(iii)]{banajifeliu26}, it is shown that these networks are degenerate. In particular, such networks admit no generic local bifurcations of positive equilibria and are, from the point of view of this paper, uninteresting. 

Networks with $s_{\mP} = r$ are termed {\bf $\mP$-toric}. We will see that in $\mP$-toric networks, the alternative equations take a block triangular form, and hence these networks are particularly amenable to the analysis to follow. 

In addition to the matrix $Z$ of conservation laws, we will be interested in the following three matrices which depend on the network and the partition $\mP$ and are not, in general, uniquely defined. 
\begin{enumerate}
\item An $(m-p-s_{\mP})\times m$ {\bf solvability matrix} $W$ whose rows are a basis of $\ker\,[A\,|\,\bm{1}_\mP]^\top$.

\smallskip
\item An $(n+p) \times (n-s_{\mP})$ full rank matrix $Q$ whose columns are a basis of $\ker\,[A\,|\,\bm{1}_\mP]$. Also define $\wQ$ to be the $n \times (n-s_{\mP})$ matrix consisting of the first $n$ rows of $Q$.

\smallskip
\item An $(n+p) \times m$ matrix $G$ which is a \textbf{generalised inverse} of $[A\,|\,\bm{1}_\mP]$ i.e., such that 
\[ [A\,|\,\bm{1}_\mP]\,G\,[A\,|\,\bm{1}_\mP] = [A\,|\,\bm{1}_\mP]\, . \] 
Also define $\wG$ to be the $n \times m$ matrix consisting of the first $n$ rows of $G$. 

\end{enumerate}
Note that $W$, $Q$ and $Z$ may be empty, and various natural conventions, detailed in \cite{banajifeliu26}, allow us to deal with these cases (see also \Cref{remspecial} below).

\subsection{Natural coordinates}
\label{secnatural}

In \cite[Section 2.5]{BBH2024smallbif}, we presented a change of coordinates applicable to full-rank networks which proved useful when studying bifurcations, not necessarily fold or cusp. We digress, slightly, to present a generalisation of this construction to networks which are not necessarily full-rank, and which may factor over a nontrivial partition. 

With $G$ and $W$ defined as above, observe that 
\[
\overline{G} := \left(\begin{array}{c}G\\W\end{array}\right)
\]
has full column rank, namely rank $m$. This follows since, by the definitions of $G$ and $W$, $([A\,|\,\bm{1}_{\mP}]G-I)[A\,|\,\bm{1}_{\mP}]=0$, hence $[A\,|\,\bm{1}_{\mP}]G-I = -UW$ for some matrix $U$; consequently $[A\,|\,\bm{1}_{\mP}\,|\,U]\overline{G} = I$, namely $\overline{G}$ has left inverse $[A\,|\,\bm{1}_{\mP}\,|\,U]$. With $\wG$ referring to the top $n \times m$ block of $G$, and $V$ comprising its next $p$ rows, this construction gives
\begin{equation}\label{eq:identity}
A\wG + \bm{1}_\mP V + UW = I\,, \quad \mbox{namely,} \quad I-A\wG=\bm{1}_\mP V + UW\,.
\end{equation}
Now define {\bf natural coordinates} $y$ on $\R^n_+$ by $y = \k^{\wG}\,\circ x$. We obtain the ODE for $y$ as follows:
\begin{eqnarray*}
  \dot y = \k^{\wG} \circ \dot x
         & = & D_{\k^{\wG}}\,\Gamma\,(\k \circ (\k^{-\wG}\,\circ y)^A)\\
         & = & D_{\k^{\wG}}\,\Gamma\,(\k^{I-A\wG} \circ y^A)\\
         & = & D_{\k^{\wG}}\,\Gamma\,(\k^{\bm{1}_{\mP}\,V + UW} \circ y^A) \qquad \mbox{(using \eqref{eq:identity})}\\
  & = & D_{\k^{\wG}}\,\Gamma\,((\k^V)^{\bm{1}_{\mP}} \circ (\k^W)^U \circ y^A)\\
  & = & D_{\k^{\wG}}\,\Gamma D_{\bm{1}_{\mP}\k^V}\,((\k^W)^U \circ y^A)\\
    & = & D_{\k^{\wG}}\,\Gamma D_{\bm{1}_{\mP}\k^V} \Gamma^* \Gamma \,((\k^W)^U \circ y^A)\\
    &=& M(\kappa) \Gamma \,((\k^W)^U \circ y^A)\,.
\end{eqnarray*}
Here $\Gamma^*$ is any generalised inverse of $\Gamma$, and the final equation defines the parameter-dependent $n \times n$ matrix $M(\kappa):= D_{\k^{\wG}}\Gamma D_{\bm{1}_{\mP}\k^V} \Gamma^*$. We have also used the fact that $\Gamma D_{\bm{1}_{\mP}\k^V} = \Gamma D_{\bm{1}_{\mP}\k^V} \Gamma^* \Gamma$. To justify this, note first that by definition $\Gamma \Gamma^* \Gamma = \Gamma$ and hence, $\mathrm{im}(\Gamma^* \Gamma - I) \in \ker \Gamma$. As $\ker \Gamma$ factors over $\mP$, it follows that $\mathrm{im}(D_{\bm{1}_{\mP}\k^V} \Gamma^* \Gamma - D_{\bm{1}_{\mP}\k^V}) \in \ker \Gamma$. 

We observe that $M(\kappa):=D_{\k^{\wG}}\Gamma D_{\bm{1}_{\mP}\k^V} \Gamma^*$ acts as a nonsingular transformation on $\mathrm{im}\,\Gamma$. To see this, note that:
\[
D_{\k^{\wG}}\Gamma D_{\bm{1}_{\mP}\k^V} \Gamma^*\Gamma v = 0\,\,\Rightarrow\,\, \Gamma D_{\bm{1}_{\mP}\k^V} \Gamma^*\Gamma v = 0\,\,\Rightarrow\,\, \Gamma \Gamma^*\Gamma v =0 \,\,\Rightarrow\,\, \Gamma v =0\,,
\]
where the first implication follows as $D_{\k^{\wG}}$ is nonsingular, the second follows as $\ker \Gamma$ factors over $\mP$, and the third follows from the definition of a generalised inverse.

Note that $\k^W$ is a vector of $m-s_\mP-p$ new positive parameters which we will refer to as the {\em inner parameters}. On the other hand, $M(\kappa)$ is a matrix dependent on the $n+p$ {\em outer parameters} $\k^{\wG}, \k^V$. It is easily seen that we can reduce the number of parameters by one by incorporating a parameter from $\k^V$ into $\k^{\wG}$. We may subsequently rescale time to reduce the number of outer parameters to $n+p-2$ leaving an $(m+n-s_\mP-2)$-parameter family of ODEs. 
Moreover, only the $m-s_\mP-p$ inner parameters directly affect the equilibrium set; the remaining parameters act via a positive linear transformation on the whole vector field.

\begin{proposition}[Consequences for full rank networks]
\label[proposition]{prop:natural}
Consider an $(n,m,n)$ network which factors over a $p$-partition $\mP$ giving rise to the mass action ODE $\dot x = g(x,\k) := \Gamma(\k\circ x^A)$. Consider also the system $\dot y = \hat{g}(y,\hat{\k}) := \Gamma(\hat{\k}^U\circ y^A)$. Given $(x_0,\k_0)$, define $y_0 = \k_0^{\wG} \circ x_0$ and $\hat{\k}_0 = \k_0^W$. Then
\begin{itemize}
\item[(a)] $g(x_0,\k_0) = 0$ if and only if $\hat{g}(y_0,\hat{\k}_0) = 0$. 
\item[(b)] $\ker J^{(g)}(x_0,\k_0) = \ker J^{(\hat{g})}(y_0,\hat{\k}_0)$. 
\end{itemize}
We now assume that $g(x_0,\k_0) = 0$, equivalently $\hat{g}(y_0,\hat{\k}_0) = 0$, and that $\ker J^{(g)}(x_0,\k_0)$ is one-dimensional, equivalently, $\ker J^{(\hat{g})}(y_0,\hat{\k}_0)$ is one-dimensional. Then
\begin{itemize}
\item[(c)] Each of conditions B2--B5 holds for $\dot x = g(x,\k)$ at $(x_0, \k_0)$ if and only if the corresponding condition holds for $\dot y = \hat{g}(y,\hat{\k})$ at $(y_0, \hat{\k}_0)$. In particular if $\dot x = g(x,\k)$ has a simply degenerate equilibrium at $(x_0, \k_0)$, and the same holds for $\dot y = \hat{g}(y,\hat{\k})$ at $(y_0, \hat{\k}_0)$, then $\dot x = g(x,\k)$ has a fold (resp., cusp) bifurcation at $(x_0, \k_0)$ if and only if $\dot y = \hat{g}(y,\hat{\k})$ has a fold (resp., cusp) bifurcation at $(y_0, \hat{\k}_0)$.
\item[(d)] If $m\leq n+p$ (resp., $m \leq n+p+1$) the network forbids a fold (resp., cusp) bifurcation.
\end{itemize}
\end{proposition}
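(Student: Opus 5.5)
Since the network is full rank ($r=n$), we have $\mathrm{im}\,\Gamma=\R^n$, so no restriction to stoichiometric classes is needed. The natural-coordinates computation of Section~\ref{secnatural} then gives, with $y=\k^{\wG}\circ x$,
\[
\dot y = \hat{f}(y,\k) := M(\k)\,\hat g(y,\k^W), \qquad M(\k)=D_{\k^{\wG}}\Gamma D_{\bm{1}_\mP\k^V}\Gamma^*,
\]
where $M(\k)$ is nonsingular on $\mathrm{im}\,\Gamma=\R^n$, i.e.\ nonsingular. The plan is to pass from $g$ to $\hat g$ using three earlier results in turn: \Cref{prop:coord}, then \Cref{prop:prefactor}, then \Cref{prop:reparam}.

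For (a), note that $g(x_0,\k_0)=D_{\k_0^{-\wG}}M(\k_0)\hat g(y_0,\hat\k_0)$ with a nonsingular prefactor, so the two vector fields vanish together. For (b), differentiate: using $\partial y/\partial x=D_{\k^{\wG}}$ we get $J^{(g)}=D_{\k^{-\wG}}M\,J^{(\hat g)}D_{\k^{\wG}}$. Hence the kernels agree up to the diagonal scaling $D_{\k^{\wG}}$. I expect the literal equality in (b) to be meant after this identification, or perhaps via the explicit form \eqref{eqJstarg}. There, $J^{(g)}=\Gamma D_{\k_0\circ x_0^A}AD_{1/x_0}$ and the analogous $J^{(\hat g)}=\Gamma D_{\hat\k_0^U\circ y_0^A}AD_{1/y_0}$. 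Since $\k_0\circ x_0^A=(\bm{1}_\mP\k_0^V)\circ\hat\k_0^U\circ y_0^A$ and $\ker\Gamma$ factors over $\mP$, the block scalars drop out of the kernel computation. So I would check directly which normalisation makes the kernels equal and state it that way.

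For (c), first apply \Cref{prop:coord} with the parameter-dependent diffeomorphism $\Psi(y,\k)=\k^{-\wG}\circ y$. This transfers B2--B5 between $g$ at $(x_0,\k_0)$ and $\hat f$ at $(y_0,\k_0)$. Next apply \Cref{prop:prefactor} with the prefactor $M(\k)$: treat $\hat f(y,\k)$ as $M(y,\beta,\gamma)\hat g(y,\beta)$, where $\beta=\k^W$ and $\gamma$ collects the complementary parameter combinations. To do this I would change parameters from $\k$ to $(\k^W,\gamma)$, for instance $\gamma=\k^{\overline{G}'}$ with $\overline{G}'$ completing $W$ to an invertible exponent matrix. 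This change is a diffeomorphism of $\R^m_+$, so \Cref{prop:reparam} keeps the transversality conditions B3 and B5 intact. Finally I would check the simple-degeneracy hypotheses on both sides. Simple degeneracy of $g$ corresponds to that of $\hat f$ by similarity. However, passing from $\hat f$ to $\hat g$ changes the spectrum, which is why the statement assumes simple degeneracy separately for $\hat g$.

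For (d), after the reduction only the $m-n-p$ inner parameters $\hat\k=\k^W$ can unfold anything, since $s_\mP=n$ in the full-rank case. B3 requires that some $\hat g_{\hat\k_i}\notin\mathrm{im}\,J$, which needs at least one parameter. B5 requires a pair of parameters $i,j$, together with a rank argument showing that if only one parameter exists the negated condition is always met by choosing $(K_1,K_2)$ to kill $\mD$. When $m\le n+p$ there are no inner parameters, so B3 fails. When $m\le n+p+1$ there is at most one, and B5 fails: with a single parameter, take $\mD$ along a trivial direction, or note that B5 needs two independent directions. I expect the main obstacle to be making B5's failure rigorous when one inner parameter exists and the pair $(i,j)$ includes a $\gamma$-direction; the key input is that $\gamma$-derivatives act only through $M$ and hence are trivially in the excluded set by \Cref{prop:prefactor}.
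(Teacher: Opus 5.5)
Your route for (a)--(c) is the paper's: pass to natural coordinates, then chain \Cref{prop:coord}, \Cref{prop:prefactor} and \Cref{prop:reparam}. The paper applies \Cref{prop:prefactor} directly to $M(\k)\hat g(y,\k^W)$, with both factors depending on $\k$, and only afterwards reparameterises via the surjection $\k\mapsto\k^W$. Your auxiliary split $\k\mapsto(\k^W,\gamma)$ is therefore harmless but not needed.

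Your unease about (b) is justified, and you should commit to it. Differentiating $D_{\k^{\wG}}g(x,\k)=M(\k)\hat g(\k^{\wG}\circ x,\k^W)$ gives $D_{\k_0^{\wG}}J^{(g)}=M(\k_0)J^{(\hat g)}D_{\k_0^{\wG}}$. Hence $\ker J^{(\hat g)}(y_0,\hat\k_0)=D_{\k_0^{\wG}}\ker J^{(g)}(x_0,\k_0)$, which is not literal equality. The paper's proof drops the right-hand factor $D_{\k_0^{\wG}}$. Only equality of dimensions is used afterwards, so nothing downstream is affected.

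The genuine gap is in (d). You claim $s_\mP=n$ for full-rank networks, but full rank only gives $s_\mP\le n$, and $s_\mP<n=r$ is exactly the $\mP$-overdetermined case. There the number of inner parameters is $m-s_\mP-p>m-n-p$, so your count can fail.

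Concretely, take $\fX\to\mathsf{0}$, $\fX\to 2\fX$. This is a $(1,2,1)$ network with $m=n+p=2$, but $s_\mP=0$, so there is one inner parameter $\k_1/\k_2$. At $\k_1=\k_2$, every $x_0>0$ is an equilibrium with $J=0$, and B3 holds since $g_{\k_1}=-x_0\neq 0$. A fold is excluded there not by failure of B3, but because B2 holds.

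The paper disposes of this case first. $\mP$-overdetermined networks are degenerate: every positive equilibrium is degenerate, by Theorem~4.2(iii) of \cite{banajifeliu26}. This is incompatible with a nondegenerate fold or cusp unfolded by the parameters, since such a bifurcation produces nondegenerate equilibria at nearby parameter values.

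Only after removing this case do you have $s_\mP=n$, hence $m-n-p\ge 0$ inner parameters, which is $0$ (resp.\ at most $1$) under the hypothesis. Then B3 (resp.\ B5) fails trivially for $\hat g$ and transfers back to $g$ by (c). Since (c) already accounts for the full parameter vector $\k$, the separate worry about $\gamma$-directions disappears.
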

\begin{proof}
(a) and (b). From above, $D_{\k_0^{\wG}} g(x_0, \k_0) = M(\k_0)\hat{g}(y_0, \hat{\k}_0)$; similarly $D_{\k_0^{\wG}} J^{(g)}(x_0, \k_0) = M(\k_0)J^{(\hat{g})}(y_0, \hat{\k}_0)$. The claims now follow as both $D_{\k_0^{\wG}}$ and $M(\k_0)$ represent nonsingular linear transformations.

(c) From \Cref{prop:coord}, each condition B1--B5 holds for $\dot x = g(x,\k):= \Gamma(\k\circ x^A)$ at $(x_0, \k_0)$ if and only if it holds for $\dot y = M(\kappa) \Gamma((\k^W)^U\circ y^A)$ at $(y_0, \k_0)$. As $M(\k_0)$ is nonsingular, by \Cref{prop:prefactor}, each condition B2--B5 holds for $\dot y = M(\kappa) \Gamma((\k^W)^U\circ y^A)$ at $(y_0, \k_0)$ if and only if it holds for $\dot y = \Gamma((\k^W)^U\circ y^A)$ at $(y_0, \k_0)$. As the map $\R^m_+ \to \R^{m-p-s_{\mP}}_+$, $\k \mapsto \k^W$ has surjective derivative, \Cref{prop:reparam} now allows us to write $\hat{\k}:= \k^W$ to obtain the claim. 

(d) If the network is $\mP$-overdetermined, hence degenerate, the final claim is trivial. Otherwise $\hat{\k}$ consists of $m-n-p = 0$ (resp., $m-n-p = 1$) parameters, and $\dot y = \Gamma(\hat{\k}^U\circ y^A)$ forbids a fully unfolded fold (resp., cusp) bifurcation. In particular, condition B3 (resp., B5) trivially fails for $\dot y = \Gamma(\hat{\k}^U\circ y^A)$ at every potential fold (resp., cusp) point. By \Cref{prop:prefactor}, the same holds for the original mass action system. 
\end{proof}

\begin{remark}
By reducing the number of parameters we need to consider from $m$ to $m-p-s_{\mP}$, \Cref{prop:natural}(c) can greatly simplify the study of fold and cusp bifurcations in full-rank mass action networks. The claim in \Cref{prop:natural}(d) will also follow from the results below, see \Cref{propfull1}. A special case of the claim about cusp bifurcations in \Cref{prop:natural}(d) for the trivial partition appeared in \cite[Theorem 36]{BBH2024smallbif}.
\end{remark}

We return to the network presented in \Cref{Ex363a} in order to illustrate natural coordinates, and the results in \Cref{prop:natural}. 
\begin{ex}[Revisiting \Cref{Ex363a}]
\label[example]{Ex363b}
Consider again the following full-rank $3$-species, $6$-reaction network:  
\[
\fX \ce{->[\k_1]} 2\fZ\,,\quad 2\fZ \ce{->[\k_2]} \fX + \fZ,\quad \fX + \fZ \ce{->[\k_3]} \fX\,,\quad \fY+\fZ \ce{->[\k_4]} \fY\,,\quad \fX+\fY \ce{->[\k_5]} \fZ\,,\quad \fZ \ce{->[\k_6]} \fX+\fY\,.
\]
It is easily confirmed that the network factors over the nontrivial partition $\mP:=\{\{1,2,3,4\},\{5,6\}\}$, and thus, as $m=6, n=3$ and $p=2$, \Cref{prop:natural}(b) tells us that this network forbids cusp bifurcations. To illustrate the ideas, we derive the ODEs in natural coordinates. We have
\[
\Gamma=\left(\begin{array}{rrrrrr}-1&1&0&0&-1&1\\0&0&0&0&-1&1\\2&-1&-1&-1&1&-1\end{array}\right), \quad [A\,|\,\bm{1}_{\mP}] = \left(\begin{array}{cccccc}1&0&0&1&0\\0&0&2&1&0\\1&0&1&1&0\\0&1&1&1&0\\1&1&0&0&1\\0&0&1&0&1\end{array}\right)\,.
\]
The original mass action system has the form
\[
\left(\begin{array}{c}\dot x\\\dot y \\\dot z \end{array}\right) = \Gamma (\k \circ x^A) = \left(\begin{array}{rrrrrr}-1&1&0&0&-1&1\\0&0&0&0&-1&1\\2&-1&-1&-1&1&-1\end{array}\right)\left(\begin{array}{c}\k_1x\\\k_2z^2\\\k_3xz\\\k_4yz\\\k_5xy\\\k_6z\end{array}\right)\,.
\]
We can choose $W = (1,2,-2,-1,1,-1)$, and
\[
\Gamma^*=\left(\begin{array}{rrr}0&0&0\\1&-1&0\\-1&0&-1\\0&0&0\\0&0&0\\0&1&0\end{array}\right)\,, \quad G = \left(\begin{array}{rrrrrr}-1&-1&2&0&0&0\\-1&-1&1&1&0&0\\-1&0&1&0&0&0\\2&1&-2&0&0&0\\1&0&-1&0&0&1\end{array}\right)\,, \quad U = \left(\begin{array}{r}0\\0\\0\\0\\1\\0\end{array}\right)\,.
\]
In natural coordinates, 
\[
\left(\begin{array}{c}X\\Y\\Z\end{array}\right) = \k^{\wG}\circ \left(\begin{array}{c}x\\y\\z\end{array}\right)\,,
\]
we get, after some manipulation,
\[
\left(\begin{array}{c}\dot X\\\dot Y \\\dot Z \end{array}\right) 
= \left(\begin{array}{ccc}\t_3\t_2&\t_3(1-\t_2)&0\\0&\t_4&0\\0&\t_5(\t_2-1)&\t_5\t_2\end{array}\right) \left(\begin{array}{rrrrrr}-1&1&0&0&-1&1\\0&0&0&0&-1&1\\2&-1&-1&-1&1&-1\end{array}\right)\left(\begin{array}{c}X\\Z^2\\XZ\\YZ\\\t_1XY\\Z\end{array}\right)
\]
where $\t_1 := \k^{W} = \frac{\k_2^2\k_1\k_5}{\k_3^2\k_4\k_6}$, is the sole inner parameter in the system, and $\t_2 = \frac{\k_1\k_2}{\k_3\k_6}$, $\t_3 = \frac{\k_3\k_6}{\k_2}$, $\t_4 = \frac{\k_4\k_6}{\k_2}$, $\t_5 = \k_6$. (If desired, a rescaling of time allows us to discard one of $\t_3, \t_4$ or $\t_5$, leaving a total of $4$ parameters.) The only parameter combination which affects the equilibrium set is $\t_1$, and by \Cref{prop:prefactor}, the parameters $\t_2, \ldots, \t_5$ cannot participate in unfolding a fold or cusp bifurcation. Thus the system forbids a (fully unfolded) cusp bifurcation. 
\end{ex}

\subsection{The alternative equations}

Define $\mC:= \ker \Gamma \cap \R^m_+$ to be the {\bf positive flux cone} of the network, which we assume to be nonempty (otherwise the network admits no positive equilibria). Write $\mC_i:=\mC_{P_i}$ so that $\mC = \mC_1 + \cdots + \mC_{p}$. For each $i = 1, \ldots, p$, choose any flat cross section of $\mC_i$, as described in \cite{banajifeliu26}, say $\widehat{\mC}_i$, and choose any affine bijection 
\[h^{(i)} \colon Y_i \longrightarrow \widehat{\mC_i}\]
with domain $Y_i \subseteq \mathbb{R}^{\dim \mC_i-1}$ an open polytope. Writing 
\[ Y := Y_1 \times \cdots \times Y_p \subseteq \mathbb{R}^{m-r-p},\quad  \quad \widehat{\mC}:= \widehat{\mC}_1 + \cdots + \widehat{\mC}_p \subseteq \R^m_+\, , \]
and 
$\alpha := (\alpha^{(1)}, \ldots, \alpha^{(p)})$ for an element of $Y$,
we define the affine bijection
\begin{equation}\label{eq:h}
h \colon \quad Y \longrightarrow  \  \widehat{\mC}  \, \qquad 
(\alpha^{(1)}, \ldots, \alpha^{(p)}) \mapsto  \sum_{i=1}^p\,h^{(i)}(\alpha^{(i)}) \,. 
\end{equation}
Observe that $\mC = \{(\bm{1}_{\mP}\lambda) \circ \hat{c}\,:\, \lambda \in \R^p_+, \hat{c} \in \widehat{\mC}\} = \{(\bm{1}_{\mP}\lambda) \circ h(\alpha)\,:\, \lambda \in \R^p_+, \alpha \in Y\}$. We write $H$ for the (constant) linear map $h_y$, and note that $\mathrm{im}\,H \subseteq \ker \Gamma$ and $\mathrm{im}\,H$ factors over $\mP$ (see \cite[Lemma 3.5]{banajifeliu26}). Clearly, given $(x_0, \k_0) \in \R^n_+ \times \R^m_+$ satisfying $g(x_0, \k_0)=0$, there exists a uniquely defined $(\hat{c}_0, \lambda_0) \in \widehat{\mC} \times \R^p_+$ such that $\k_0 \circ x_0^A = (\bm{1}_{\mP}\lambda_0) \circ \hat{c}_0$. 

We will frequently call on the following basic result from \cite{banajifeliu26}. 
\begin{lemma}
\label[lemma]{lemGsolve}
Consider an $(n,m,r)$ network which factors over a $p$-partition $\mP$.
\begin{itemize}
\item[(i)] Given $u \in \mathrm{im}\,[A\,|\,\bm{1}_\mP]$, $v \in \R^{n-s_{\mP}}$, and $z \in \R^n$, satisfying $\wG u + \wQ v = z$, there exists $t \in \R^p$ such that $u = A z  + \bm{1}_\mP t$. 
\item[(ii)] Given $(z,t) \in \R^n \times \R^p$, there exists a unique $v \in \R^{n-s_{\mP}}$ such that $\wG(A z  + \bm{1}_\mP t) + \wQ v = z$.
\end{itemize}
Consequently, given $(z,t) \in \R^n \times \R^p$, the pair of equations $Wu=0$, $\wG u + \wQ v = z$ has a unique solution $(u(z,t),v(z,t)) \in \R^m \times \R^{n-s_{\mP}}$ with $u(z,t) = Az + \bm{1}_{\mP} t$.  
\end{lemma}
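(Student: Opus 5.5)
The plan is to work throughout with the matrix $M:=[A\,|\,\bm{1}_\mP]$ and two facts about it. First, since $MGM=M$, for any $w\in\R^{n+p}$ we have $M(GMw-w)=0$, so $GMw = w + Qa$ for some $a\in\R^{n-s_\mP}$. This $a$ is unique because $Q$ has full column rank. Second, $MQ=0$, which, writing a column combination $Qb$ as $(\wQ b, q(b))$ with $q(b)\in\R^p$ the bottom $p$ entries, reads $A\wQ b + \bm{1}_\mP q(b) = 0$.

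For (i), I will write $u=Mw$ with $w=(z',t')\in\R^n\times\R^p$ and use the first fact to get $Gu = w+Qa$. Taking top $n$ rows gives $\wG u = z' + \wQ a$. Hence the hypothesis $\wG u+\wQ v=z$ becomes $z = z' + \wQ b$ with $b:=a+v$. Then $Az = Az' + A\wQ b = Az' - \bm{1}_\mP q(b)$ by the second fact. So $u = Az'+\bm{1}_\mP t' = Az + \bm{1}_\mP(t'+q(b))$, and $t:=t'+q(b)$ works.

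For (ii), existence follows the same computation. With $w=(z,t)$ and $GMw=w+Qa$, taking top rows gives $\wG(Az+\bm{1}_\mP t) = z + \wQ a$, so $v:=-a$ works. Uniqueness requires $\wQ$ to have trivial kernel. If $\wQ b=0$, then the second fact gives $\bm{1}_\mP q(b)=0$. Since the blocks of $\mP$ are disjoint and (as I will assume) nonempty, $\bm{1}_\mP$ has full column rank, so $q(b)=0$. Hence $Qb=0$, and $b=0$ because $Q$ has full column rank. I expect this injectivity of $\wQ$ to be the only point needing care, since it is where nonemptiness of the blocks enters.

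For the final consequence, I first note that because the rows of $W$ form a basis of $\ker M^\top$, we have $\ker W = \mathrm{im}\,M$. Thus $Wu=0$ is equivalent to $u\in\mathrm{im}\,M$. Given $(z,t)$, the vector $u=Az+\bm{1}_\mP t$ lies in $\mathrm{im}\,M$, and (ii) provides the unique $v$ with $\wG u+\wQ v=z$, giving existence. Conversely, by (i), any solution $(u,v)$ of the pair of equations has $u=Az+\bm{1}_\mP t$ for some $t$, and then $v$ is forced by (ii). I read the uniqueness claim in exactly this sense: the solutions are parametrised by $t$, with $u(z,t)=Az+\bm{1}_\mP t$ and $v(z,t)$ uniquely determined. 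I will state this interpretation explicitly in the write-up.
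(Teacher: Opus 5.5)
Your proof is correct. The paper gives no argument for (i) and (ii) at this point: it defers them to Lemma~3.12 of \cite{banajifeliu26}. It then calls the final claim an immediate consequence of (i) and (ii), which is exactly what your last paragraph does. You use $\ker W=\mathrm{im}\,[A\,|\,\bm{1}_\mP]$, and your reading of ``unique'' as ``solutions parametrised by $t$'' matches how the lemma is later used in the proof of \Cref{lembasic}(b).

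What your route adds is a short, self-contained derivation of (i) and (ii). It uses only the two identities $[A\,|\,\bm{1}_\mP]\,G\,[A\,|\,\bm{1}_\mP]=[A\,|\,\bm{1}_\mP]$ and $[A\,|\,\bm{1}_\mP]\,Q=0$. It also makes explicit where nonemptiness of the blocks of $\mP$ enters.

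That nonemptiness is not an extra hypothesis you are imposing. It is implicit in the paper's notion of a partition, and the constructions of $\mC_{P_i}$ and its cross sections also rely on it. It is also genuinely necessary. If some block $P_i$ were empty, then $(0,e_i)$ would lie in $\ker[A\,|\,\bm{1}_\mP]$. Then $\wQ$ would have a nontrivial kernel, and uniqueness in (ii) would fail.
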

\begin{proof}
Parts (i) and (ii) are proved in \cite[Lemma~3.12]{banajifeliu26}, while the final claim is an immediate consequence of the first two. 
\end{proof}

Define $F\colon Y \times \R^{n-s_\mP}_+ \times \R^m_+ \to \R^n$ by
\begin{equation}
\label{eq:F}
  F(\alpha,\mu,\k) := (h(\alpha)/\k)^{\wG} \circ \mu^{\wQ}\,.
\end{equation}
For some $(\alpha_0, \mu_0, \k_0)$, let $x_0:= F(\alpha_0,\mu_0,\k_0)$. We compute
\begin{equation}
\label{eqJF}
\partial_{(\alpha, \mu)} F(\alpha_0, \mu_0, \k_0) = \left(\begin{array}{cc}D_{x_0}\wG D_{1/h(\alpha_0)}H& D_{x_0}\wQ D_{1/\mu_0} \end{array}\right)\,.
\end{equation}
\begin{comment}
and
\begin{equation}
\label{eqJhatF}
\partial_{(\alpha, \mu, \k)}\wF(\alpha_0, \mu_0, \k_0)= \left(\begin{array}{ccc}D_{x_0}\wG D_{1/h(\alpha_0)}H & D_{x_0} \wQ D_{1/\mu_0} & -D_{x_0} \wG D_{1/\k_0}\\0&0&I\end{array}\right)\,.
\end{equation}
\end{comment}
Given some fixed $K \in \R^{n-r}$, define $f_K\colon Y \times \R^{n-s_\mP}_+ \times \R^m_+ \to \R^{m-s_\mP-p} \times \R^{n-r}$ by
\begin{equation}
\label{eqalt}
f_K(\alpha, \mu, \k) := \left(\begin{array}{c}h(\alpha)^W - \k^W\\Z F(\alpha, \mu, \k) - K\end{array}\right) = \left(\begin{array}{c}h(\alpha)^W - \k^W\\Z[(h(\alpha)/\k)^{\wG}\circ \mu^{\wQ}] - K\end{array}\right)\,.
\end{equation}
We refer to the system of equations 
\[
f_K(\alpha, \mu, \k)=0
\]
as the {\bf alternative equations} for the network. The subsystem $h(\alpha)^W - \k^W = 0$ is referred to as the {\bf solvability system} for the network.

We calculate: 
\begin{equation}
\label{eqJf}
\partial_{(\alpha, \mu)}f_K(\alpha_0, \mu_0, \k_0) = \left(\begin{array}{cc}D_{h(\alpha_0)^W}WD_{1/h(\alpha_0)}H&0\\ZD_{x_0}\wG D_{1/h(\alpha_0)}H& ZD_{x_0}\wQ D_{1/\mu_0} \end{array}\right)\,,
\end{equation}
and
\begin{comment}
\begin{equation}
\label{eqJfstar}
\partial_{(\alpha, \mu, \k)}f_K(\alpha_0, \mu_0, \k_0) = \left(\begin{array}{ccc}D_{h(\alpha_0)^W}WD_{1/h(\alpha_0)}H&0& - D_{\k_0^W}WD_{1/\k_0}\\ZD_{x_0}\wG D_{1/h(\alpha_0)}H & ZD_{x_0} \wQ D_{1/\mu_0} & -ZD_{x_0} \wG D_{1/\k_0}\end{array}\right)\,.
\end{equation}
\end{comment}
\begin{equation}
\label{eqJfstar}
\partial_{\k}f_K(\alpha_0, \mu_0, \k_0) = -\left(\begin{array}{c}D_{\k_0^W}W\\ZD_{x_0} \wG \end{array}\right) D_{1/\k_0}\,.
\end{equation}

A key result from \cite{banajifeliu26} that we will call on is the following:
\begin{theorem}
\label[theorem]{thm:degensolvability}
Consider an $(n,m,r)$ network which factors over a $p$-partition $\mP$. Fix $\k_0 \in \R^m_+$ and $K \in \R^{n-r}$. 
\begin{enumerate}[align=left,leftmargin=*, label=(\roman*)]
\item[(i)] $F(\cdot, \cdot, \k_0)$ is a smooth bijection between the zero set of $f_K(\cdot, \cdot, \k_0)$ and the set of positive equilibria of the network on $S_K$. 
\end{enumerate} 
Let $x_0 \in \R^n_+$ be an equilibrium on $S_K$, i.e., $g(x_0,\k_0) = 0$ and $Zx_0 = K$. Using (i), define $(\alpha_0, \mu_0) \in Y \times \mathbb{R}_+^{n-s_{\mP}}$ to be the (unique) point satisfying $f_K(\alpha_0, \mu_0, \k_0) = 0$ and $F(\alpha_0, \mu_0, \k_0) = x_0$. Write $J^{(f)}:=\partial_{(\alpha, \mu)}f_K(\alpha_0, \mu_0,\k_0)$, $J^{(g)}:= \partial_{x}g(x_0, \k_0)$. 
\begin{enumerate}[align=left,leftmargin=*, label=(\roman*)]
\item[(ii)] Then 
\[\ker J^{(f)} \cong \ker J^{(g)} \cap \mathrm{im}\,\Gamma\, ,\] 
with the isomorphism given by $J^{(F)} := \partial_{(\alpha, \mu)} F(\alpha_0,\mu_0,\k_0)$. 
\end{enumerate}
\end{theorem}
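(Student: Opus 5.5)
The plan is to work throughout in logarithmic coordinates, where the monomial structure becomes linear. Then \Cref{lemGsolve} does most of the work, both for the bijection in (i) and for the kernel map in (ii).

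For (i), $g(x,\k_0)=0$ with $x\in\R^n_+$ holds if and only if $\k_0\circ x^A\in\mC$. By the parameterisation of $\mC$ this means $\k_0\circ x^A=(\bm{1}_\mP\lambda)\circ h(\alpha)$ for a unique pair $(\lambda,\alpha)\in\R^p_+\times Y$. Taking logarithms gives
\[
\log(h(\alpha)/\k_0) = A\log x - \bm{1}_\mP\log\lambda\,,
\]
so the left-hand side lies in $\mathrm{im}\,[A\,|\,\bm{1}_\mP]$. This holds if and only if $W\log(h(\alpha)/\k_0)=0$, which is exactly the solvability system $h(\alpha)^W=\k_0^W$.

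Conversely, suppose $\alpha$ satisfies the solvability system, and put $u=\log(h(\alpha)/\k_0)$. By \Cref{lemGsolve}, the vectors $z=\log x$ realising $u=Az+\bm{1}_\mP t$ are precisely those of the form $z=\wG u+\wQ v$, with $v=\log\mu$ uniquely determined by $(z,t)$. So $x=F(\alpha,\mu,\k_0)$ parameterises the positive equilibria bijectively. Uniqueness of $\alpha$ comes from uniqueness of the decomposition $\k_0\circ x^A=(\bm{1}_\mP\lambda)\circ\hat c$, and uniqueness of $\mu$ comes from \Cref{lemGsolve}(ii). Adding the constraint $ZF-K=0$ restricts the bijection to $S_K$, and smoothness of $F$ is evident.

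For (ii), take $(a,b)\in\ker J^{(f)}$ and set $\xi:=J^{(F)}(a,b)=D_{x_0}(\wG D_{1/h}Ha+\wQ D_{1/\mu_0}b)$, using \eqref{eqJF}.
\begin{itemize}
\item The first block row of \eqref{eqJf} gives $W D_{1/h}Ha=0$, so $u':=D_{1/h}Ha\in\mathrm{im}\,[A\,|\,\bm{1}_\mP]$.
\item \Cref{lemGsolve}(i) then yields $\tau\in\R^p$ with $AD_{1/x_0}\xi=u'+\bm{1}_\mP\tau$.
\item By \eqref{eqJstarg},
\[
J^{(g)}\xi=\Gamma D_{(\bm{1}_\mP\lambda_0)\circ h}\bigl(D_{1/h}Ha+\bm{1}_\mP\tau\bigr)=\Gamma\bigl((\bm{1}_\mP\lambda_0)\circ Ha\bigr)+\Gamma\bigl((\bm{1}_\mP(\lambda_0\circ\tau))\circ h\bigr)\,.
\]
Both terms vanish, because $Ha$ and $h$ lie in $\ker\Gamma$ and $\ker\Gamma$ factors over $\mP$.
\item The second block row of \eqref{eqJf} gives $Z\xi=0$, so $\xi\in\ker Z=\mathrm{im}\,\Gamma$.
\end{itemize}
Hence $J^{(F)}$ maps $\ker J^{(f)}$ into $\ker J^{(g)}\cap\mathrm{im}\,\Gamma$.

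For injectivity, suppose $\xi=0$. Then \Cref{lemGsolve} gives $Ha=-(\bm{1}_\mP\tau)\circ h(\alpha_0)$. Each flat cross-section $\widehat{\mC}_i$ is transverse to the rays of $\mC_i$, so the tangent direction $\mathrm{im}\,H$ contains no nonzero blockwise rescaling of $h(\alpha_0)$. This forces $Ha=0$, and then $a=0$ because $h$ is an affine bijection. With $a=0$ we get $\wQ D_{1/\mu_0}b=0$. Since the columns of $Q$ lie in $\ker[A\,|\,\bm{1}_\mP]$, a vanishing top block forces $\bm{1}_\mP$ times the bottom block to vanish, so $Q$-columns with zero top part are zero. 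Thus $\wQ$ has full column rank and $b=0$.

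For surjectivity, take $\xi\in\ker J^{(g)}\cap\mathrm{im}\,\Gamma$ and set $w:=D_{\k_0\circ x_0^A}AD_{1/x_0}\xi\in\ker\Gamma$. The map $(\lambda,\alpha)\mapsto(\bm{1}_\mP\lambda)\circ h(\alpha)$ is a local diffeomorphism onto the open cone $\mC$, which is open in $\ker\Gamma$, so its differential is onto $\ker\Gamma$. We can therefore write $w=(\bm{1}_\mP\lambda_0)\circ(Ha)+(\bm{1}_\mP\sigma)\circ h$, so that $AD_{1/x_0}\xi=D_{1/h}Ha+\bm{1}_\mP\tau$ for some $\tau$. \Cref{lemGsolve}(ii) then supplies $b$ with $D_{1/x_0}\xi=\wG D_{1/h}Ha+\wQ D_{1/\mu_0}b$, which is exactly $\xi=J^{(F)}(a,b)$. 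Applying $W$ to $D_{1/h}Ha=AD_{1/x_0}\xi-\bm{1}_\mP\tau$ gives $WD_{1/h}Ha=0$, and $Z\xi=0$ because $\xi\in\mathrm{im}\,\Gamma$; together these put $(a,b)\in\ker J^{(f)}$.

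I expect the main obstacle to be this surjectivity step, together with the transversality fact used for injectivity. Both rest on the flux-cone parameterisation having full-rank differential onto $\ker\Gamma$. If the decomposition proves awkward, a fallback is a dimension count comparing $\dim\ker J^{(f)}$ with $\dim(\ker J^{(g)}\cap\mathrm{im}\,\Gamma)$ via ranks of the block matrices.
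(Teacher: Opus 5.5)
Your argument is correct. It differs from the paper only in that the paper does not prove this theorem at all: it imports it verbatim from \cite[Theorem~3.21]{banajifeliu26}.

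You instead give a self-contained proof. You pass to logarithms and use $\ker W=\mathrm{im}\,[A\,|\,\bm{1}_\mP]$ together with \Cref{lemGsolve}, both for the bijection in (i) and for the kernel map in (ii). You then use the geometry of the flat cross-sections for injectivity and surjectivity. The two cross-section facts you invoke are exactly what the paper records, again by citation to \cite[Lemma~3.5]{banajifeliu26}, as \Cref{lemM1M2M3}(a),(b): $\mY\cap\mZ=\{0\}$ and $\ker\Gamma=D_{\hat{c}_0}(\mY+\mZ)$.

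The one step that needs an extra line is the surjectivity of the differential of $(\lambda,\alpha)\mapsto(\bm{1}_\mP\lambda)\circ h(\alpha)$. A smooth bijection onto an open set need not be a local diffeomorphism, so this does not follow from the parameterisation of $\mC$ alone. It does follow from what you already have:
\begin{itemize}
\item the differential at $(\lambda_0,\alpha_0)$ is $(\sigma,a)\mapsto(\bm{1}_\mP\sigma)\circ h(\alpha_0)+(\bm{1}_\mP\lambda_0)\circ Ha$;
\item it is injective by the same transversality computation as your injectivity step, after dividing the $P_i$ block by $(\lambda_0)_i$;
\item it takes values in $\ker\Gamma$;
\item its domain has dimension $p+(m-r-p)=m-r=\dim\ker\Gamma$, so it is onto.
\end{itemize}
Your dimension-count fallback is therefore not needed.

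What your route buys over the citation is transparency. It shows, already at the level of Jacobians, the same mechanism the paper later uses for the higher-order conditions in \Cref{lembasic,lemtransfer}: moving between $\mA$, $\mY$ and $\mZ$ via \Cref{lemGsolve}.
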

\begin{proof}
The theorem was proved as \cite[Theorem~3.21]{banajifeliu26}. 
\end{proof}

It was shown in \cite{banajifeliu26} that when asking about existence and degeneracy of equilibria of a mass action network on the stoichiometric class $S_K$, we can use \Cref{thm:degensolvability} to transfer attention from the {\em original} mass action equation system $g(x,\k)=0,\,\,Zx=K$, to the {\em alternative equations} $f_K(\alpha,\mu, \k)=0$; see also \cite{regensburger:gale} for results in this direction. This allowed us, for example, to bound the number of positive nondegenerate equilibria on any stoichiometric class for several classes of networks. Here we will prove that when asking questions about the occurrence, nondegeneracy and transversality of fold and cusp bifurcations, we can likewise transfer attention to the alternative equations. 

\begin{remark}[Special cases of the alternative equations]
\label[remark]{remspecial}
We gather some special cases where some of the matrices in question are empty:
\begin{itemize}
\item When $m=s_\mP+p$, $W$ is empty, and the alternative equations become the system of $n-r$ equations in $n-r$ variables:
\[
f_K(\alpha, \mu,\k) := Z[(h(\alpha)/\k)^{\wG}\circ \mu^{\wQ}] - K=0\,.
\]
We remark that in the case that $W$ is empty, the sources of the network were referred to as {\bf $\mP$-independent} in \cite{banajifeliu26}. When $\mP$ is the trivial partition, $\mP$-independence is simply affine independence.

\item When $m= r+p$, $Y$ consists of a single point, say $\alpha_0$, and we may write the alternative equations as
\[
f_K(\mu,\k) := \left(\begin{array}{c}\hat{c}_0^W - \k^W\\Z[(\hat{c}_0/\k)^{\wG}\circ \mu^{\wQ}] - K\end{array}\right)=0\,.
\]
where $\hat{c}_0:=h(\alpha_0)$. In this case, either the network is $\mP$-overdetermined, in which case nondegenerate positive equilibria (hence fold and cusp bifurcations) are impossible, or $m=s_\mP+p$, in which case $W$ is empty and the alternative equation system becomes, simply,
\[
f_K(\mu,\k) := Z[(\hat{c}_0/\k)^{\wG}\circ \mu^{\wQ}] - K=0\,.
\]

\item When $n=s_\mP$, then the variables $\mu$ are empty, and $\wQ$ is an empty matrix: in this case, the alternative equations become
\[
f_K(\alpha,\k) := \left(\begin{array}{c}h(\alpha)^W - \k^W\\Z[(h(\alpha)/\k)^{\wG}] - K\end{array}\right)=0\,.
\]

\item When $n=r$, namely the network has full-rank and consequently $Z$ is empty, either the system is $\mP$-overdetermined, hence nondegenerate positive equilibria are impossible, or $n=s_\mP$ and hence the set of variables $\mu$ is empty, and $\wQ$ is empty. In this case, the alternative equations are simply the solvability equations, namely:
\[
f(\alpha,\k) := h(\alpha)^W - \k^W=0\,.
\]

\item When $m=s_\mP+p$ and $n=r$, so that both $W$ and $Z$ are empty, then as $\mC$ is nonempty, hence $m \geq r+p$, we must have $m=r+p$. In this case, the alternative system degenerates to an empty system on a domain consisting of a single point, which is trivially always satisfied. Fold and cusp bifurcations are clearly forbidden.
\end{itemize}
\end{remark}

To illustrate the construction of the alternative equations in the full-rank case, we revisit the network in Example 1. Reference \cite{banajifeliu26} includes numerous further examples of the construction of the alternative equations. 
\begin{ex}
\label[example]{Ex1a}
Recall the following $(2,4,2)$ network from \Cref{Ex1}:
\[
\fX \ce{->[\k_1]} \mathsf{0}\,,\quad \mathsf{0} \ce{->[\k_2]} \fX + \fY,\quad \fX + \fY \ce{->[\k_3]} 3\fX\,,\quad 2\fX \ce{->[\k_4]} 3\fX\,.
\]
In this case, using the trivial partition, we have
\[
\Gamma = \left(\begin{array}{rrrr}-1&1&2&1\\0&1&-1&0\end{array}\right), \quad [A\,|\,\bm{1}] = \left(\begin{array}{ccc}1&0&1\\0&0&1\\1&1&1\\2&0&1\end{array}\right)\,,\quad W=(2,-1,0,-1)\,.
\]
We may define $h(\alpha)=(2\alpha+1,\alpha,\alpha,1-\alpha)^\top$ with domain $(0,1)$. As $\ker[A\,|\,\bm{1}]$ is trivial, the alternative equation $h(\alpha)^W = \k^W$ takes the form $(2\alpha+1)^2/(\alpha(1-\alpha)) = \k_1^2/(\k_2\k_4)$, which, after clearing denominators (justified by \Cref{prop:prefactor}) and writing $\theta := \frac{\k_1^2}{\k_2\k_4}$, becomes
\[
(4+\theta)\alpha^2 + (4-\theta)\alpha + 1 = 0\,,
\]
as in \Cref{Ex1}.
\end{ex}

\section{Main results}
\label{secmain}

In all results to follow, to avoid trivialities, we assume that the alternative system of equations is nonempty. 

{\bf Notation and assumptions.} Throughout this section, we fix the following notation and assumptions. We fix an equilibrium $\hat{p}_0 := (x_0, \k_0) \in g^{-1}(0)$, and let $K = Zx_0$. We write $p_0:= (\alpha_0, \mu_0, \k_0)$ where, via \Cref{thm:degensolvability}(i), $(\alpha_0, \mu_0)$ is uniquely defined by $f_K(\alpha_0, \mu_0, \k_0) = 0$, $F(\alpha_0, \mu_0, \k_0) = x_0$. Write $c_0:= \k_0 \circ x_0^A$ and note that $c_0 \in \mC$. Write $\hat{c}_0 := h(\alpha_0)$, and note that $\hat{c}_0 \in \hat{\mC}$, and in particular $c_0 = (\bm{1}_{\mP}\lambda_0) \circ \hat{c}_0$ for some uniquely defined $\lambda_0 \in \R^p_+$. We write $\wJ:=g_x(\hat{p}_0)$, and $J:=\partial_{(\alpha, \mu)}f_K(p_0)$. We assume that $\ker J$, equivalently by \Cref{thm:degensolvability}(ii), $\ker \wJ \cap \mathrm{im}\,\Gamma$, is one-dimensional. As usual, we let $\dot x = \tilde{g}_K(x, \k)$ denote the restriction of \eqref{eqODE} to $S_K$ (recall that $\tilde{g}_K\colon S_K \times \R^m_+ \to \mathrm{im}\,\Gamma$). 

The main result of this paper is the following. 

\begin{theorem}
\label[theorem]{foldcuspmain}
Fix $i,j \in \{1, \ldots, m\}$. Each condition B2--B5 holds for $\tilde{g}_K$ at $\hat{p}_0$ if and only if the corresponding condition holds for $f_K$ at $p_0$. Consequently: 
\begin{enumerate}
\item If $\hat{p}_0$ is a simply degenerate point of $\tilde{g}_K$, then the mass action system \eqref{eqODE} has a nondegenerate fold (resp., cusp) bifurcation at $\hat{p}_0$, and this bifurcation is unfolded transversally by $\k_i$ (resp., $\{\k_i, \k_j\}$), if and only if $f_K$ satisfies conditions B2$\,^{c}$ and B3 (resp., B2, B4 and B5) at $p_0$. 
\item If $\hat{p}_0$ is a simply degenerate point of $\tilde{g}_K$ and $p_0$ is a simply degenerate point of $f_K$, then the following are equivalent:
\begin{itemize}
\item The mass action system \eqref{eqODE} has a nondegenerate fold (resp., cusp) bifurcation at $\hat{p}_0$, and this bifurcation is unfolded transversally by $\k_i$ (resp., $\{\k_i, \k_j\}$).
\item $f_K$ has a nondegenerate fold (resp., cusp) bifurcation at $p_0$, and this bifurcation is unfolded transversally by $\k_i$ (resp., $\{\k_i, \k_j\}$).
\end{itemize}
\end{enumerate}
\end{theorem}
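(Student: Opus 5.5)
The plan is to connect $\tilde g_K$ and $f_K$ through a chain of intermediate systems, each step covered by one of Propositions~\ref{prop:coord}, \ref{prop:elimination}, \ref{prop:prefactor}, \ref{prop:power} or \ref{prop:reparam}. All of these preserve B2--B5 when the kernel is one-dimensional. The conclusions (1) and (2) then follow at once from the definitions of nondegenerate, unfolded fold and cusp bifurcations. Simple degeneracy of $\tilde g_K$ gives B1 on the mass action side. In (2) we additionally assume B1 for $f_K$, so B2$^c$/B3 or B2/B4/B5 transfer directly.

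\textbf{Step 1: an augmented system.} Introduce auxiliary variables $(\alpha,\lambda,\mu)$ and consider, alongside $x$, the system
\[
\Phi(x,\alpha,\lambda,\mu,\k) := \left(\begin{array}{c}\k\circ x^A - (\bm{1}_\mP\lambda)\circ h(\alpha)\\ x - F(\alpha,\mu,\k)\\ Zx-K\end{array}\right),
\]
with a suitable log-linear rewriting of the first block, e.g. $\log$ of both sides, which is justified by Proposition~\ref{prop:power}.

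The first block parameterises equilibria through the flux cone. Taking logarithms, $\log\k + A\log x = \bm{1}_\mP\log\lambda + \log h(\alpha)$. Applying $W$ to this gives the solvability system $h(\alpha)^W=\k^W$. Applying $\wG$ and using \Cref{lemGsolve} together with the identity \eqref{eq:identity} recovers $x = F(\alpha,\mu,\k)$ for a unique $\mu$, and expresses $\lambda$ as a smooth function of the remaining variables.

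I therefore split the log system into a $W$-part and a $[\wG;V]$-part. This is a constant invertible linear change of the equations, since $\overline G$ has full column rank and left inverse $[A\,|\,\bm{1}_\mP\,|\,U]$, so Proposition~\ref{prop:prefactor} applies.

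\textbf{Step 2: two eliminations.} Using Proposition~\ref{prop:elimination}, eliminate from the augmented system in two ways.

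First, eliminate $(\alpha,\lambda,\mu)$ in favour of $x$. Near $p_0$, $c=\k\circ x^A$ determines $(\alpha,\lambda)$ uniquely, provided $c\in\ker\Gamma$. The condition $\Gamma(\k\circ x^A)=0$ restricted to $S_K$ is exactly $\tilde g_K=0$. To make this rigorous, I would write $\k\circ x^A = (\bm 1_\mP\lambda)\circ h(\alpha) + N w$, where the columns of $N$ span a complement of $\ker\Gamma$ that is adapted to $\mP$. The equation $\Gamma(\cdot)=0$ then becomes $w=0$ with nonsingular Jacobian in $w$, and the remaining equations can be solved for $(\alpha,\lambda)$ in terms of $(x,w)$. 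After elimination the reduced equation is $\Gamma(\k\circ x^A)$ on $S_K$, possibly premultiplied by a nonsingular matrix. Proposition~\ref{prop:prefactor} removes that matrix, and Proposition~\ref{prop:coord} handles the choice of coordinates on $S_K$.

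Second, eliminate $x$ through $x=F(\alpha,\mu,\k)$, whose Jacobian in $x$ is the identity. Together with the rewriting of Step~1, this leaves exactly $f_K(\alpha,\mu,\k)=0$.

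\textbf{Main obstacle.} The hard part is the bookkeeping in the first elimination. One must check the nonsingularity hypotheses of Proposition~\ref{prop:elimination}, namely that the block being solved for has invertible Jacobian at the base point. The key inputs are that $h$ is an affine bijection onto $\widehat\mC$, with $H$ injective and $\mathrm{im}\,H$ factoring over $\mP$, and that $\ker\Gamma$ factors over $\mP$. One must also verify that the kernel dimensions agree at each stage, which is essentially \Cref{thm:degensolvability}(ii) and \Cref{simplydegen}, so that the propositions apply.

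The parameters $\k$ enter every system unchanged, so the transversality conditions B3 and B5 for $\k_i$ and $\{\k_i,\k_j\}$ are preserved throughout. This uses the parameter-inclusive form of Proposition~\ref{prop:elimination}, with $\beta=\k$ and no reparameterisation. If the elimination does not fit Proposition~\ref{prop:elimination} directly, the fallback is to verify B2--B5 by hand. In that approach I would transport $q$ and the preimages $h$ through $J^{(F)}$ via the chain rule, using that the second and third derivatives of $x\mapsto x^A$ and of $F$ are determined by $A\wG$ and the identity $I-A\wG=\bm 1_\mP V+UW$.
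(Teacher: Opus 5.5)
You are taking a genuinely different route from the paper. The paper computes the Taylor coefficients of both $\tilde g_K$ and $f_K$ directly, and shows in Lemmas~\ref{lemB2}--\ref{lemB5} that each of B2--B5 is equivalent to a common vector lying in $\mM=\mA+\mY+\mZ$. Your idea is to realise $\tilde g_K$ and $f_K$ as two reductions of one augmented system, so that Propositions~\ref{prop:coord}--\ref{prop:power} carry B2--B5 across.

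\textbf{The gap.} Step~1 does not produce a system to which those propositions apply. They all concern square systems, and Proposition~\ref{prop:prefactor} needs a square nonsingular prefactor. Your $\Phi$ has $m+n+(n-r)$ equations in $n+(m-r)+(n-s_\mP)$ unknowns, i.e.\ $s_\mP$ too many. The reason is the one you state yourself: the log first block already encodes both $h(\alpha)^W=\k^W$ and $x=F(\alpha,\mu,\k)$ for some $\mu$. Adjoining $x-F(\alpha,\mu,\k)=0$ as a separate block therefore double counts. Elimination removes equally many equations and unknowns, so the excess of $s_\mP$ persists through both eliminations in Step~2, and neither reduced system is square.

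Your proposed fix does not repair this. $\overline G$ is $(n+m-s_\mP)\times m$, injective but not square unless $n=s_\mP$, so multiplying the log equations by it is not an invertible change of equations. Moreover, the $\wG$-part of the log equations contains no $\mu$ at all. It coincides with $\log x-\log F$ only when $\ker[A\,|\,\bm 1_\mP]=0$, i.e.\ when there are no $\mu$ variables. The ``main obstacle'' you flag, nonsingularity in the first elimination, is actually harmless; your $w$-device handles it correctly. The real difficulty is introducing $\mu$ legitimately.

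\textbf{A possible repair.} You need a square augmented system and an invertible transformation that produces $\mu$. For instance, take unknowns $(x,\alpha,\lambda,\mu)$ and the equations
$W\log(h(\alpha)/\k)=0$, $E:=\left(\begin{array}{c}\log x\\ -\log\lambda\end{array}\right)-G\log(h(\alpha)/\k)-Q\log\mu=0$ and $Zx-K=0$.
This system is square. Eliminating $(x,\lambda)$ via $E$ gives $f_K$, up to Proposition~\ref{prop:power}.

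In the other direction, use $[A\,|\,\bm 1_\mP]G=I-UW$ and $[A\,|\,\bm 1_\mP]Q=0$. Premultiply $(W\log(h/\k),E)$ by $\left(\begin{array}{cc}-U & [A\,|\,\bm 1_\mP]\\ 0 & Q^+\end{array}\right)$, where $Q^+$ is a left inverse of $Q$. This gives $\log(\k\circ x^A)-\log((\bm 1_\mP\lambda)\circ h(\alpha))=0$, together with an equation of the form $\log\mu=(\cdots)$. The matrix is square and invertible, because $WU=I$ (from $WUW=W$ and full row rank of $W$) and $\ker[A\,|\,\bm 1_\mP]=\mathrm{im}\,Q$. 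Eliminating $\mu$ then leaves the square system $\k\circ x^A=(\bm 1_\mP\lambda)\circ h(\alpha)$, $Zx=K$ in $(x,\alpha,\lambda)$. Your $w$-device correctly connects this system to $\tilde g_K$.

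With this inserted, your reduction route would prove the theorem without the explicit second- and third-order computations of Lemmas~\ref{lemB2}--\ref{lemB5}. As written, it does not go through.
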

\begin{proof}
The result will follow immediately from \Cref{lemB2,lemB3,lemB4,lemB5} below, which cover conditions B2--B5 in turn. The two consequences are immediate. 
\end{proof}

\begin{remark}[The assumption of simple degeneracy]
\Cref{Ex1} above illustrates the need to introduce explicitly in \Cref{foldcuspmain}(2) the assumption that the relevant points are simply degenerate points of $\tilde{g}_K$ and $f_K$. In the original mass-action equations, when $\k_1^2 = 12 \k_2\k_4$ and $\k_3 = \frac{2}{3}\k_4$, the system has degenerate equilibria at which the Jacobian matrix has a zero eigenvalue of algebraic multiplicity two and, in fact, a Bogdanov--Takens bifurcation occurs \cite[Example 2]{BBH2024smallbif}. The alternative equations, consisting of one equation in a single variable cannot, of course, admit an eigenvalue of algebraic multiplicity $2$ or Bogdanov--Takens bifurcations, and thus fail to capture the full local bifurcation behaviour of the mass action network at these points. 
\end{remark}

\begin{remark}[Application of \Cref{foldcuspmain}]
As seen in the examples above, given a mass action network, we may apply \Cref{foldcuspmain} in two ways.
\begin{enumerate}
\item We first identify simply degenerate points of the mass action equations. We then use the alternative equations to check when the corresponding points satisfy conditions B2$^{c}$ and B3 (resp., B2, B4 and B5), giving us the fold (resp., cusp) bifurcation set. 

\item We show that the alternative equations always fail some conditions necessary for fold (resp., cusp) bifurcations, thus ruling out fold (resp., cusp) bifurcations in the mass action systems associated with some class of networks. 

\end{enumerate}

\end{remark}

The remainder of this section is devoted to the proof of \Cref{foldcuspmain}: we prove some preliminary results, and then the equivalence of conditions B2--B5 for the original and alternative systems.

\subsection{Higher derivatives of $f$ and $g$}

${}$
Recall our assumption that $\ker J$ is one-dimensional. Fix $0 \neq q =(q_1, q_2) \in \ker J$, define $\hat{q}:= J^{(F)}q$ and note that, by \Cref{thm:degensolvability}(ii), $\{\hat{q}\}$ spans $\ker \wJ \cap \mathrm{im}\,\Gamma$. 

{\bf Notation.} The following notation greatly shortens various expressions. Given $x \in \R^n$, define $x^* = x/x_0$ and $\hat{v}_x := Ax^* = A(x/x_0)$. For brevity let $\hat{v} := \hat{v}_{\hat{q}} = A(\hat{q}/x_0)$. Given $y = (y_1, y_2) \in \R^{m-r-p} \times \R^{n-s_{\mP}}$, define $v_y := D_{1/\hat{c}_0}H y_1$, and $w_y := y_2/\mu_0$. For brevity, let $v:= v_q = D_{1/\hat{c}_0}H q_1$, and $w := w_q = q_2/\mu_0$. Note that with this notation, using \eqref{eqJF},
\begin{equation}
\label{eqhatq}
\hat{q}:= J^{(F)}q = D_{x_0}(\wG v + \wQ w)\,.
\end{equation}

{\bf Derivatives of $g$.} Write $\wB(\cdot, \cdot)$ for $g_{xx}(\cdot, \cdot)$ evaluated at $\hat{p}_0$, and $\wC(\cdot, \cdot, \cdot)$ for $g_{xxx}(\cdot, \cdot, \cdot)$ evaluated at $\hat{p}_0$. We compute, for any $\hat{h} \in \R^n$,
\begin{equation}
\label{eqwbhat1}
\wB(\hat{q},\hat{h}) = \left.\frac{\partial}{\partial t}\left[\Gamma D_{\k\circ(x_0 + t\hat{q})^A} A D_{1/(x_0 + t\hat{q})} \hat{h}\right]\right|_{t=0} =  \Gamma D_{c_0} \left[\hat{v}\circ \hat{v}_{\hat{h}} - A(\hat{q}^*\circ \hat{h}^*)\right]\,.
\end{equation}
In particular,
\begin{equation}
\label{eqwbhat}
\wB(\hat{q},\hat{q}) = \Gamma D_{c_0}\left[\hat{v}^2  -A(\hat{q}^*)^2\right]\,.
\end{equation}
We also calculate
\begin{eqnarray}
\label{eqwchat}
\wC (\hat{q},\hat{q},\hat{q}):= \left.\frac{\partial^3}{\partial t^3}g(x_0 + t\hat{q}, \k_0)\right|_{t=0} 
&=& \Gamma D_{c_0}\left[\hat{v}^3 - 3D_{\hat{v}}A (\hat{q}^*)^2 + 2A(\hat{q}^*)^3\right]\,.
\end{eqnarray}

{\bf Derivatives of $f$.} Let $y := (\alpha, \mu)$. Write $B(\cdot, \cdot)$ for $f_{yy}(\cdot, \cdot)$ evaluated at $p_0$ and $C(\cdot, \cdot, \cdot)$ for $f_{yyy}(\cdot, \cdot, \cdot)$ evaluated at $p_0$. Then
\begin{eqnarray}
\label{eqwb1}
B(q,h) &:=& \left.\frac{\partial}{\partial t}\left[\frac{\partial}{\partial t'} f_K(\alpha_0 + t'h_1 + tq_1, \mu_0 + t'h_2 + tq_2, \k_0)\right]\right|_{t=t'=0}\\ \nonumber 
&=& \left(\begin{array}{c}-D_{\k_0^W} W (v\circ v_h)\\Z D_{x_0} [\hat{q}^*\circ(\wG v_h + \wQ w_h) - \wG (v \circ v_h) - \wQ (w\circ w_h)]\end{array}\right)\,.
\end{eqnarray}
In particular,
\begin{equation}
\label{eqwb}
B(q,q) = \left(\begin{array}{c}-D_{\k_0^W}W v^2\\Z D_{x_0}[(\hat{q}^*)^2 - \wG v^2 -\wQ w^2]\end{array}\right)
\end{equation}

We calculate
\begin{eqnarray}
\label{eqwc}
C(q,q,q) &:=& \left.\frac{\partial^3}{\partial t^3} f_K(\alpha_0 + tq_1, \mu_0 + tq_2, \k_0)\right|_{t=0}\\\nonumber &=& 
\left(\begin{array}{c} 2 D_{\k_0^W}W v^3\\Z D_{x_0}[(\hat{q}^*)^3 - 3(\hat{q}^*)\circ (\wG v^2 + \wQ w^2) + 2\wG v^3 +2\wQ w^3]\end{array}\right)\,.
\end{eqnarray}

\subsection{Preliminary lemmas} 

Define the following linear subspaces of $\R^m$. 
\[
\mA:= \mathrm{im}(A D_{1/x_0}\Gamma), \quad \mY:= \mathrm{im}(D_{1/\hat{c}_0}H), \quad \mZ:= \mathrm{im}(\bm{1}_\mP)\,, \quad \mM = \mA + \mY + \mZ\,.
\]
Our strategy will be to show that various bifurcation conditions, nondegeneracy conditions, and transversality conditions are equivalent to certain quantities belonging to $\mM$. 

The following lemma gathers results about the subspaces $\mA, \mY, \mZ$ and their sum $\mM$. 
\begin{lemma}
\label[lemma]{lemM1M2M3}
(a) $\mY \cap \mZ = \{0\}$. (b) $\ker \Gamma = D_{\hat{c}_0}(\mY+\mZ)$.
(c) $\mZ^k = \mZ$ for all $k\geq 1$ and $\mZ\mY = \mY$. (d) $\mA \cap (\mY+\mZ) = \mathrm{span}\{\hat{v}\}$. (e) $\hat{v} - v \in \mZ$. (f) $\mY \cap (\mA+\mZ) = \mathrm{span}\{v\}$. (g) Given $v_1, v_2, v_3 \in \R^m$, if $v_1 \in (v_3 + \mA + \mZ) \cap \mY$ and $v_2 \in (v_3 + \mY + \mZ) \cap \mA$, then $v_1 + v_2 \in v_3 + \mathrm{span}\{\hat{v}\} + \mZ$. (h) $\hat{v}^2 -v^2 \in \mY + \mZ$, equivalently $D_{\hat{c}_0}(\hat{v}^2 - v^2) \in \ker \Gamma$. (i) $\mathrm{dim}\,\mM = m-1$. 
\end{lemma}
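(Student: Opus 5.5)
I will prove the nine claims more or less in the order stated, since later parts build on earlier ones. The ingredients are: the structure of $\widehat{\mC}$ via $h$; the factoring of $\ker\Gamma$ and $\mathrm{im}\,H$ over $\mP$; \Cref{lemGsolve}; and \Cref{thm:degensolvability}(ii) applied to $q$ and $\hat q$.

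\textbf{Parts (a)--(c).} For (a), I use the fact that $\hat{\mC}_i$ is a flat cross section of $\mC_i$. Hence $\mathrm{im}\,H_{P_i}$ is parallel to the affine hull of $\hat{\mC}_i$, and this affine hull does not pass through the origin. Consequently $\hat c_0|_{P_i}\notin \mathrm{im}\,H_{P_i}$. If $D_{1/\hat c_0}Hy=\bm{1}_\mP t$, then $Hy=(\bm{1}_\mP t)\circ\hat c_0$. Restricting to $P_i$ and using that $\mathrm{im}\,H$ factors, this forces $t_i=0$ for each $i$.

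For (b), the dimensions match: $\dim\mY+\dim\mZ=(m-r-p)+p=m-r=\dim\ker\Gamma$. The inclusion $D_{\hat c_0}(\mY+\mZ)\subseteq\ker\Gamma$ holds because $\mathrm{im}\,H\subseteq\ker\Gamma$ and $\hat c_0\in\ker\Gamma$, the latter factoring over $\mP$.

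For (c), $\mZ^k=\mZ$ is immediate because $\mZ$ consists of vectors constant on blocks. For $\mZ\mY=\mY$, entrywise multiplication by $\bm{1}_\mP t$ preserves $D_{1/\hat c_0}\mathrm{im}\,H$, since $\mathrm{im}\,H$ factors over $\mP$.

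\textbf{Parts (d)--(f).} For (d), note that $\mA+\mZ\subseteq\mathrm{im}[A\,|\,\bm{1}_\mP]=\ker W$. An element of $\mA\cap(\mY+\mZ)$ has the form $u=AD_{1/x_0}\Gamma z$ with $D_{\hat c_0}u\in\ker\Gamma$. I will show this says exactly that $\Gamma z$ corresponds to an element of $\ker\wJ\cap\mathrm{im}\,\Gamma$. Indeed $\wJ=\Gamma D_{c_0}AD_{1/x_0}$, and $c_0=(\bm{1}_\mP\lambda_0)\circ\hat c_0$, so by factoring $\Gamma D_{c_0}u=0\iff\Gamma D_{\hat c_0}u=0$. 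Therefore $\Gamma z\in\mathrm{span}\{\hat q\}$ and $u\in\mathrm{span}\{\hat v\}$. Conversely $\hat v\in\mA$ because $\hat q\in\mathrm{im}\,\Gamma$, and $D_{c_0}\hat v\in\ker\Gamma$.

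For (e), \eqref{eqhatq} gives $\hat q^*=\wG v+\wQ w$. Since $Hq_1\in\ker\Gamma\cap$ a set annihilated appropriately, we have $Wv=0$: the solvability row of $Jq=0$ gives $WD_{1/\hat c_0}Hq_1=0$. So $v\in\mathrm{im}[A\,|\,\bm{1}_\mP]$, and \Cref{lemGsolve}(i) yields $v=A\hat q^*+\bm{1}_\mP t$, that is, $v-\hat v\in\mZ$.

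For (f), take $y\in\mY\cap(\mA+\mZ)$. Then $Wy=0$, and \Cref{lemGsolve}(ii) produces $\mu$-components solving the first block of $J(\cdot)=0$. The second block, $Z D_{x_0}(\cdot)=0$, should follow because $\mA$-components arise from $\mathrm{im}\,\Gamma$ and $Z\Gamma=0$. This produces an element of $\ker J$ mapping to $y$, so $y\in\mathrm{span}\{v\}$. The converse is (e) together with (d).

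\textbf{Parts (g)--(i).} Part (g) is linear bookkeeping. Write $v_1=v_3+a+z$ with $a\in\mA$, and $v_2=v_3+y+z'$ with $y\in\mY$. Then $v_2-v_3-z'\in\mA$ lies in $\mY+\mZ$, so it is a multiple of $\hat v$ by (d); the $v_1$ term is handled the same way via (f) and (e). Combining the two gives the claim.

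For (h), I write $\hat v=v+\bm{1}_\mP t$. Then $\hat v^2-v^2=2(\bm{1}_\mP t)\circ v+(\bm{1}_\mP t)^2$, which lies in $\mZ\mY+\mZ=\mY+\mZ$ by (c). The equivalence with $D_{\hat c_0}(\hat v^2-v^2)\in\ker\Gamma$ is (b).

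For (i), I compute $\dim\mM=\dim\mA+\dim(\mY+\mZ)-\dim\big(\mA\cap(\mY+\mZ)\big)$. Here $\dim\mA=\mathrm{rank}(AD_{1/x_0}\Gamma)$ and $\dim(\mY+\mZ)=m-r$, while the intersection has dimension $1$ by (d). So it suffices to show $\mathrm{rank}(A|_{\mathrm{im}\Gamma})=r$, which gives $r+(m-r)-1=m-1$. If $A$ killed some nonzero $\Gamma z$, that vector would lie in $\ker\wJ\cap\mathrm{im}\,\Gamma$ and produce $\hat v=0$. I will show this case must be excluded, or else handled by checking that $\hat v\neq0$ using the isomorphism in \Cref{thm:degensolvability}(ii), and I expect this to be the main obstacle. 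Relating $\ker(A)\cap\mathrm{im}\,\Gamma$ to the non-$\mP$-overdetermined assumption $s_\mP=\mathrm{rank}[A|\bm{1}_\mP]-p\ge r$ is the point I would verify most carefully, alongside the second-block argument in (f).
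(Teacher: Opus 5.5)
Parts (a)--(e) and (h) are fine; for (a) and (b) you give self-contained arguments where the paper only cites. Your route for (f) goes through $\ker J$ rather than the paper's $\ker\wJ\cap\mathrm{im}\,\Gamma$, and it works, but you have the roles of the two blocks swapped. The $W$-block of $J(s_1,s_2)$ does not involve $s_2$; it vanishes simply because $Wy=0$. It is \Cref{lemGsolve}(ii), applied with $z=D_{1/x_0}\Gamma s$ and $t$ the $\mZ$-coefficient of $y$, that supplies $w'$ with $\wG y+\wQ w'=D_{1/x_0}\Gamma s$, so the $Z$-block vanishes by $Z\Gamma=0$.

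\textbf{The gap in (g).} The claim $v_2-v_3-z'\in\mA$ is false in general. Since $v_2-v_3-z'=y\in\mY$, it lies in $\mA$ only if $v_3+z'\in\mA$, which nothing guarantees. Symmetrically, $v_1-v_3-z=a$ need not lie in $\mY$. For a concrete failure, take $v_3=v_1=y_0\in\mY$ arbitrary and $v_2=0$, so $y=-y_0$ and $a=z=z'=0$. Your argument would then force $y_0\in\mathrm{span}\{\hat v\}$. The two pieces cannot be treated separately; the missing step is to eliminate $v_3$ by equating the two decompositions. From $v_3=v_1-a-z=v_2-y-z'$ you get $a+v_2=v_1+y+z'-z$. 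The left side lies in $\mA$ and the right side in $\mY+\mZ$, so $a+v_2=t\hat v$ by (d). Hence $v_1+v_2=v_3+(a+v_2)+z\in v_3+\mathrm{span}\{\hat v\}+\mZ$. Only (d) is needed, not (e) or (f).

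\textbf{The gap in (i).} The case $\hat v=0$ cannot be excluded, by non-overdetermination or otherwise. It genuinely occurs under the standing hypotheses, even for $\mP$-toric networks. Take $\fX+\fY\to 2\fX$, $\fX+\fY\to 2\fY$, $2\fX+2\fY\to 3\fX+\fY$, $2\fX+2\fY\to \fX+3\fY$ with the trivial partition, so that $s_\mP=r=1$. At any positive equilibrium with $x_0=y_0$ (these exist for suitable rate constants), $\mathrm{im}\,\Gamma=\mathrm{span}\{(1,-1)^\top\}=\ker\wJ\cap\mathrm{im}\,\Gamma$, and $\hat v=A(\hat q/x_0)=0$, so $\mA=\{0\}$. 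In this situation your assertion that $\mA\cap(\mY+\mZ)$ is one-dimensional ``by (d)'' is also wrong, since (d) only gives $\mathrm{span}\{\hat v\}$.

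You already have the right observation: a nonzero element of $\ker(AD_{1/x_0})\cap\mathrm{im}\,\Gamma$ lies in $\ker\wJ\cap\mathrm{im}\,\Gamma=\mathrm{span}\{\hat q\}$. What you are missing is that the two defects cancel. If $\hat v\neq 0$, then $\dim\mA=r$ and $\dim(\mA\cap(\mY+\mZ))=1$. If $\hat v=0$, then $\dim\mA=r-1$ and the intersection is $\{0\}$. Either way your Grassmann formula gives $\dim\mM=m-1$. This two-case count is exactly the paper's argument, which it phrases for $D_{\hat c_0}\mM=\mathrm{im}(D_{\hat c_0}AD_{1/x_0}\Gamma)+\ker\Gamma$.
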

\begin{proof}
(a) and (b). See \cite[Lemma 3.5]{banajifeliu26}.

(c) $\mZ^k = \mZ$ is obvious. To see that $\mZ\mY = \mY$, note that given $z \in \mZ$ and $y = D_{1/\hat{c}_0}H s \in \mY$, we have
\[
z\circ y = D_{z}D_{1/\hat{c}_0}H s = D_{1/\hat{c}_0}[z\circ (H s)] = D_{1/\hat{c}_0}H s' \in \mY
\]
for some $s'$, as $\mathrm{im}\,H$ factors over $\mP$, see \cite[Lemma 3.5]{banajifeliu26}.

(d) First, $\hat{v} \in \mA \cap (\mY+\mZ)$: by definition $\hat{v}:= AD_{1/x_0}\hat{q}$, and since $\hat{q} \in \mathrm{im}\,\Gamma$, it follows that $\hat{v} \in \mA$; on the other hand, 
\[
0 = \wJ\hat{q} = \Gamma (c_0\circ \hat{v}) = \Gamma (z\circ \hat{c}_0\circ \hat{v})
\]
for some $z \in \mZ$ from which (as $\ker\Gamma$ factors over $\mP$) it follows that $\hat{c}_0 \circ \hat{v} \in \ker \Gamma$, equivalently (using part (b)) $\hat{v} \in \mY + \mZ$. Given $0 \neq x \in \mA \cap (\mY+\mZ)$, we have $x = AD_{1/x_0}\Gamma s$ for some $s$, and $\Gamma D_{\hat{c}_0} x = 0$ (by part (b)). Putting these together gives $\Gamma D_{\hat{c}_0}AD_{1/x_0}\Gamma s= 0$, namely (as $\ker \Gamma$ factors over $\mP$), $\wJ\Gamma s = \Gamma D_{c_0}AD_{1/x_0}\Gamma s = 0$ from which it follows that $\Gamma s \in \mathrm{span}\{\hat{q}\}$, implying, $x\in\mathrm{span}\{\hat{v}\}$.

(e) The first component of $Jq=0$ tells us that $v \in \ker W =  \mathrm{im}\,[A\,|\,\bm{1}_\mP]$. On the other hand,
\[
\hat{v}:= A\hat{q}^* = A(\wG v + \wQ w)\,.
\]
Using \Cref{lemGsolve}(i), we have $\hat{v}-v = A(\wG v + \wQ w) - v \in \mZ$.

(f) First, $v \in \mY \cap (\mA + \mZ)$: by definition $v := D_{1/\hat{c}_0}H q_1 \in \mY$; meanwhile, from (e), $v \in \hat{v} + \mZ \subseteq \mA + \mZ$. Now, given $0 \neq y \in \mY \cap (\mA+\mZ)$, we have $\Gamma D_{\hat{c}_0} y = 0$ (as $y \in \mY$). On the other hand, as $y \in \mA + \mZ$, we can write $y = A D_{1/x_0} \Gamma s + z$, for some $s \in \R^m, z \in \mZ$, namely, $\Gamma D_{\hat{c}_0} A D_{1/x_0} \Gamma s + \Gamma D_{\hat{c}_0} z = 0$. The second term is automatically zero since $\hat{c}_0 \in \ker \Gamma$, hence $\hat{c}_0 \circ z \in \ker \Gamma$, so $\Gamma D_{\hat{c}_0} A D_{1/x_0} \Gamma s = 0$, namely (as $\ker \Gamma$ factors over $\mP$), $\wJ\Gamma s = \Gamma D_{c_0} A D_{1/x_0} \Gamma s = 0$ from which it follows that $\Gamma s = t \hat{q}$ for some $t \in \R$. So, using (e), $y = t\hat{v} + z = t v + z'$, for some $z' \in \mZ$. But we must have $z'=0$ since $y,v \in \mY$ and, by part (a), $\mY \cap \mZ = \{0\}$. Thus $y \in \mathrm{span}\,\{v\}$.

(g) Write $v_1 = v_3 + a + z = y$ and $v_2 = v_3 + y' + z' = a'$, where $a,a' \in \mA$, $y,y' \in \mY$ and $z,z' \in \mZ$. Then $v_3 = y-a-z = a'-y'-z'$, from which $a + a' = y + y' -z + z' = t \hat{v}$ for some $t \in \R$ by part (d). Thus $v_1 + v_2 = v_3 + a'+ a + z = v_3 + t\hat{v} + z$.

(h) Note that $v \in \mY$ and, by part (e), $\hat{v} = v + z$ for some $z \in \mZ$. Consequently, by part (c), 
\[
\hat{v}^2 -v^2 =  2z \circ v + z^2 \in \mY + \mZ\,,
\]
equivalently, by part (b), $D_{\hat{c}_0}(\hat{v}^2 - v^2) \in \ker \Gamma$.

(i) We show that $D_{\hat{c}_0}\mM = \mathrm{im}\,(D_{\hat{c}_0}AD_{1/x_0}\Gamma) + \ker \Gamma$ has dimension $m-1$ from which the claim follows. By assumption, $\mathrm{rank}\,\wJ\Gamma = r-1$. A vector $s \in \mathrm{im}\,(D_{\hat{c}_0}AD_{1/x_0}\Gamma) \cap \ker \Gamma$ satisfies, for some $s' \in \R^m$, $s = D_{\hat{c}_0}AD_{1/x_0}\Gamma s'$ and $\Gamma s = 0$, giving $\wJ\Gamma s'=0$ (we used that $\ker \Gamma$ factors over $\mP$). But then, $\Gamma s' = t\hat{q}$ for some $t \in \R$. If $\hat{v} := AD_{1/x_0} \hat{q} = 0$, then $s=0$, i.e., the intersection $\mathrm{im}\,(D_{\hat{c}_0}AD_{1/x_0}\Gamma) \cap \ker \Gamma$ is trivial. In this case, $r-1 = \mathrm{rank}\,\wJ\Gamma \leq \mathrm{rank}(D_{\hat{c}_0}AD_{1/x_0}\Gamma) \leq r-1$, and $\mathrm{im}\,(D_{\hat{c}_0}AD_{1/x_0}\Gamma) + \ker \Gamma$ is the direct sum of an $(r-1)$-dimensional and an $(m-r)$-dimensional subspace proving the result in this case. If $\hat{v} \neq 0$, then $\mathrm{rank}(D_{\hat{c}_0}AD_{1/x_0}\Gamma) = r$, and the intersection $\mathrm{im}\,(D_{\hat{c}_0}AD_{1/x_0}\Gamma) \cap \ker \Gamma = \mathrm{span}\{\hat{c}_0 \circ \hat{v}\}$; so $\mathrm{im}\,(D_{\hat{c}_0}AD_{1/x_0}\Gamma) + \ker \Gamma$ is the sum of an $r$-dimensional subspace and an $(m-r)$-dimensional subspace with 1D intersection; again, this proves the result.
\end{proof}

The next lemma will be used frequently to translate conditions on membership of $\mathrm{im}\,J$ and $\mathrm{im}\,(\wJ \Gamma)$ into conditions involving membership of $\mM$. 
\begin{lemma}
\label[lemma]{lembasic}
Given vectors $u_0$, $u_1$, $u_2$, $u_3$ of appropriate dimension, we have the following equivalences:
\begin{enumerate}
\item[(a)] $\Gamma D_{c_0} u_0 \in \mathrm{im}\,(\wJ\Gamma) \quad \Leftrightarrow \quad u_0 \in \mM$. \vskip 2mm
\item[(b)] $\displaystyle{\left(\begin{array}{c}D_{\k_0^W}Wu_1\\ZD_{x_0}[\wG u_1 + \wQ u_2 + u_3]\end{array}\right) \in \mathrm{im}\,J\quad \Leftrightarrow \quad u_1 + Au_3 \in \mM}$. 
\end{enumerate}
\end{lemma}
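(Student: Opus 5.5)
Part (a) reduces to a chain of equivalences between images of linear maps. Part (b) is handled by solving for the unknown preimage using \Cref{lemGsolve}.

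For (a), first note that $\wJ\Gamma = \Gamma D_{c_0}AD_{1/x_0}\Gamma$, so $\mathrm{im}(\wJ\Gamma) = \Gamma D_{c_0}\mA$. Hence $\Gamma D_{c_0}u_0 \in \mathrm{im}(\wJ\Gamma)$ if and only if there is $a\in\mA$ with $D_{c_0}(u_0 - a)\in\ker\Gamma$. Write $c_0 = (\bm{1}_\mP\lambda_0)\circ\hat{c}_0$ with $\lambda_0\in\R^p_+$. Since $\ker\Gamma$ factors over $\mP$, $D_{c_0}(u_0-a)\in\ker\Gamma$ is equivalent to $D_{\hat{c}_0}(u_0-a)\in\ker\Gamma$. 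By \Cref{lemM1M2M3}(b) this holds if and only if $u_0 - a\in\mY+\mZ$. Thus the condition is exactly $u_0\in\mA+\mY+\mZ=\mM$.

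For (b), the target vector lies in $\mathrm{im}\,J$ if and only if there is $y=(y_1,y_2)$ with $Jy$ equal to it. Using \eqref{eqJf} and the notation $v_y = D_{1/\hat{c}_0}Hy_1$, $w_y = y_2/\mu_0$, the top block gives
\[
D_{\hat{c}_0^W}Wv_y = D_{\k_0^W}Wu_1.
\]
Since $f_K(p_0)=0$ gives $\hat{c}_0^W=\k_0^W$, and these diagonal matrices are nonsingular, this is $W(u_1 - v_y)=0$. The bottom block is
\[
ZD_{x_0}[\wG v_y + \wQ w_y] = ZD_{x_0}[\wG u_1+\wQ u_2+u_3].
\]
Now put $u := u_1 - v_y \in\ker W$. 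By \Cref{lemGsolve}, for any $z\in\R^n$ the system $Wu=0$, $\wG u+\wQ v' = z$ is solvable exactly with $u = Az+\bm{1}_\mP t$. So I would reformulate the bottom block as $ZD_{x_0}[\wG u + \wQ(u_2 - w_y) + u_3]=0$. Since $\ker(ZD_{x_0}) = D_{1/x_0}\mathrm{im}\,\Gamma$, this says $\wG u + \wQ v' + u_3 = D_{1/x_0}\Gamma s$ for some $s$, where $v' := u_2 - w_y$. Setting $z := D_{1/x_0}\Gamma s - u_3$ and applying \Cref{lemGsolve}(i) gives $u = A z+\bm{1}_\mP t$. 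This yields
\[
u_1 - v_y = AD_{1/x_0}\Gamma s - Au_3 + \bm{1}_\mP t,
\]
so $u_1 + Au_3 \in \mY+\mA+\mZ=\mM$.

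For the converse, suppose $u_1+Au_3 = v_y + AD_{1/x_0}\Gamma s + \bm{1}_\mP t$. Take $z = D_{1/x_0}\Gamma s - u_3$ and use \Cref{lemGsolve}(ii) to get a unique $v'$ with $\wG(Az+\bm{1}_\mP t)+\wQ v' = z$. Then define $y_2$ from $w_y = u_2 - v'$, i.e. $y_2 = \mu_0\circ(u_2-v')$, and choose $y_1$ realising $v_y$. Reversing the computation shows both blocks match.

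The main obstacle is the bookkeeping in (b): ensuring that every element of $\mY$ arises as $v_y$ for some $y_1$. This is immediate from the definition of $\mY$. One must also confirm that $u=Az+\bm{1}_\mP t$ lies in $\ker W$, which holds since $W[A\,|\,\bm{1}_\mP]=0$, and that the $\wQ$-component is free, which is why $u_2$ drops out.
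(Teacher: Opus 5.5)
Your proof is correct and follows essentially the same route as the paper. For (a) you use $\mathrm{im}(\wJ\Gamma)=\Gamma D_{c_0}\mA$, the factoring of $\ker\Gamma$ over $\mP$, and \Cref{lemM1M2M3}(b); for (b) you unpack $Jy$ blockwise using $\hat{c}_0^W=\k_0^W$, $\ker(ZD_{x_0})=D_{1/x_0}\mathrm{im}\,\Gamma$ and \Cref{lemGsolve}, exactly as the paper does with its auxiliary vectors $s_1,s_2,s_3$ (your explicit converse construction just spells out what the paper compresses into ``we can choose $s_2$'').
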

\begin{proof}
(a) The condition on the left is clearly equivalent to the existence of $a \in \mA$ s.t. $u_0-a \in D_{1/\hat{c}_0}\ker \Gamma = \mY + \mZ$, where we have used \Cref{lemM1M2M3}(b) and the fact that $\ker \Gamma$ factors over $\mP$. The claim follows. 

(b) Using \eqref{eqJf}, the condition on the left is equivalent to the existence of $s_1, s_2$ s.t. the pair of equations
\[
D_{\k_0^W}W(u_1-D_{1/\hat{c}_0}H s_1) = 0, \quad ZD_{x_0}[\wG (u_1 - D_{1/\hat{c}_0}H s_1) + \wQ (u_2 - s_2/\mu_0) + u_3] = 0
\]
is satisfied. The first equation is equivalent to $u_1-D_{1/\hat{c}_0}H s_1 \in \mathrm{im}\,[A\,|\,\bm{1}_\mP]$; the second is equivalent to the existence of $s_3$ s.t. $\wG (u_1 - D_{1/\hat{c}_0}H s_1) + \wQ (u_2 - s_2/\mu_0) + u_3 =D_{1/x_0}\Gamma s_3$. For any fixed $s_1, s_3$, \Cref{lemGsolve} tells us that we can choose $s_2$ to satisfy this latter equation if and only if there exists $z \in \mZ$ s.t. $u_1 - D_{1/\hat{c}_0}H s_1 + Au_3 = AD_{1/x_0}\Gamma s_3 + z$. The pair of equations are thus equivalent to $u_1 + Au_3 \in \mM$. (Note that this certainly also implies the first condition $u_1-D_{1/\hat{c}_0}H s_1 \in \mathrm{im}\,[A\,|\,\bm{1}_\mP]$.)
\end{proof}

The following technical lemma is useful. Given $x \in \R^n$, recall the notation $x^*:= x/x_0$, $\hat{v}_x := A(x/x_0)$. 
\begin{lemma}
\label[lemma]{lemtransfer}
Let $x \in \mathrm{im}\,\Gamma$, $u_0 \in \mY$, $u_0+u_1 \in \mathrm{im}\,[A\,|\,\bm{1}_\mP]$, and suppose that $u_0, u_1, u_2, u_3$ satisfy the simultaneous equations
\begin{equation}
\label{eqtech}
\Gamma D_{c_0}[\hat{v}_x - u_1 - Au_3]=0\,, \quad ZD_{x_0}[\wG(u_0+u_1) + \wQ u_2 + u_3]=0\,.
\end{equation}
Then there exist $t \in \R$ and $z \in \mZ$ s.t. (i) $\wG(u_0+u_1) + \wQ u_2 + u_3 = x^* + t \hat{q}^*$ and (ii) $u_0 = \hat{v}_x -u_1 - Au_3 + t\hat{v} + z$. 
\end{lemma}
\begin{proof}
For brevity, let $\theta:= u_1 + Au_3$ and $y:= \wG(u_0+u_1) + \wQ u_2 + u_3$. Multiplying the latter definition by $A$ gives
\begin{equation}
\label{eqA1}
Ay = u_0 + \theta + z\,,
\end{equation}
for some $z \in \mZ$. Observe that the second equation of \eqref{eqtech} implies that $Ay \in \mA$. Thus $-u_0 \in (\theta + \mA + \mZ) \cap \mY$. Using the first equation of \eqref{eqtech}, $\hat{v}_x \in (\theta + \mY+\mZ) \cap \mA$. By \Cref{lemM1M2M3}(g), the previous two observations imply that there exists $\tilde{t} \in \R$ and $z' \in \mZ$ s.t. 
\begin{equation}
\label{eqA2}
\hat{v}_x - u_0 = \theta + \tilde{t} \hat{v} + z'\,.
\end{equation}
Frome \eqref{eqA1} and \eqref{eqA2} we get $Ay = \hat{v}_x - \tilde{t}\hat{v} + z''$ for some $z'' \in \mZ$. But then 
\[
\wJ D_{x_0}y = \Gamma D_{c_0}Ay = \Gamma D_{c_0}[\hat{v}_x - \tilde{t}\hat{v} + z''] = \wJ x\,.
\]
(We have used the fact that $\Gamma D_{c_0}\hat{v}$ and $\Gamma D_{c_0}z''$ are automatically zero.) Thus, since $D_{x_0}y \in \mathrm{im}\,\Gamma$ by the second equation of \eqref{eqtech}, $D_{x_0} y = x + t \hat{q}$ for some $t \in \R$. Multiplying through by $D_{1/x_0}$ gives the first conclusion. Multiplying through by $AD_{1/x_0}$ and using \eqref{eqA1} gives
\[
A y = \hat{v}_x + t \hat{v} = u_0 + \theta + z\,,
\]
leading to the second conclusion. 
\end{proof}

\subsection{The main lemmas}
We note first that $\mathrm{im}\,J^{(\tilde{g}_K)} = \mathrm{im}\,(\wJ\Gamma)$. 

\begin{lemma}[condition B2]
\label[lemma]{lemB2}
The following are equivalent: 
\begin{enumerate}
\item $B(q,q) \in \mathrm{im}\,J$,
\item $\wB (\hat{q}, \hat{q}) \in \mathrm{im}\,(\wJ\Gamma)$,
\item $\hat{v}^2 - A(\hat{q}^*)^2 \in \mM$,
\item $v^2 - A(\hat{q}^*)^2 \in \mM$,
\item $D_{\hat{c}_0}[\hat{v}^2 - A(\hat{q}^*)^2] \in \mathrm{im}\,(D_{\hat{c}_0}AD_{1/x_0}\Gamma) + \ker \Gamma\,.$
\end{enumerate}
Consequently, condition B2 holds for $\tilde{g}_K$ at $\hat{p}_0:=(x_0, \k_0)$ if and only if it holds for $f_K$ at $p_0:=(\alpha_0, \mu_0, \k_0)$. 
\end{lemma}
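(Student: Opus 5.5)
The plan is to connect every item to (3) or (4) using the translation results already proved. Lemma \ref{lembasic} and Lemma \ref{lemM1M2M3} do most of the work.

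\emph{(2)$\Leftrightarrow$(3).} By \eqref{eqwbhat}, $\wB(\hat q,\hat q)=\Gamma D_{c_0}[\hat v^2-A(\hat q^*)^2]$. Lemma \ref{lembasic}(a) with $u_0=\hat v^2-A(\hat q^*)^2$ then gives this equivalence directly.

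\emph{(3)$\Leftrightarrow$(4).} We have $\hat v^2-v^2\in\mY+\mZ\subseteq\mM$ by Lemma \ref{lemM1M2M3}(h), so the two vectors differ by an element of $\mM$.

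\emph{(3)$\Leftrightarrow$(5).} Multiply by the nonsingular $D_{\hat c_0}$. Lemma \ref{lemM1M2M3}(b) gives $D_{\hat c_0}(\mY+\mZ)=\ker\Gamma$, and $D_{\hat c_0}\mA=\mathrm{im}(D_{\hat c_0}AD_{1/x_0}\Gamma)$, so $D_{\hat c_0}\mM$ is exactly the subspace appearing in (5).

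\emph{(1)$\Leftrightarrow$(4).} This is the substantive step. Write \eqref{eqwb} in the shape required by Lemma \ref{lembasic}(b), with the first block $D_{\k_0^W}Wu_1$ and the second $ZD_{x_0}[\wG u_1+\wQ u_2+u_3]$. The first component of $B(q,q)$ is $-D_{\k_0^W}Wv^2$, which forces $u_1=-v^2$. The second component is $ZD_{x_0}[(\hat q^*)^2-\wG v^2-\wQ w^2]$, which then has the required form with $u_2=-w^2$ and $u_3=(\hat q^*)^2$. Lemma \ref{lembasic}(b) says $B(q,q)\in\mathrm{im}\,J$ if and only if $-v^2+A(\hat q^*)^2\in\mM$, and since $\mM$ is a subspace this is (4).

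The main thing to check is that the same vector $u_1$ appears in both components. This holds here because $W$ acts on $v^2$ and $\wG$ acts on $v^2$, with identical sign. It is the only delicate bookkeeping point.

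\emph{Final statement.} Recall that $\mathrm{im}\,J^{(\tilde g_K)}=\mathrm{im}(\wJ\Gamma)$, and that $\hat q=J^{(F)}q$ spans $\ker\wJ\cap\mathrm{im}\,\Gamma$ by Theorem \ref{thm:degensolvability}(ii). Hence B2 for $\tilde g_K$ is exactly (2). Restricting $g_{xx}$ to $\mathrm{im}\,\Gamma$ leaves the vector $\wB(\hat q,\hat q)$ unchanged; only its interpretation as an element of $\mathrm{im}\,\Gamma$ matters. B2 for $f_K$ is (1). Since B2 does not depend on the choice of $q$, the chain (1)$\Leftrightarrow$(4)$\Leftrightarrow$(3)$\Leftrightarrow$(2) gives the claim.
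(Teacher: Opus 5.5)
Your proposal is correct and follows the paper's proof essentially step for step. You get (2)$\Leftrightarrow$(3) from \eqref{eqwbhat} and Lemma~\ref{lembasic}(a), (3)$\Leftrightarrow$(4) from Lemma~\ref{lemM1M2M3}(h), (3)$\Leftrightarrow$(5) from Lemma~\ref{lemM1M2M3}(b), and (1)$\Leftrightarrow$(4) from \eqref{eqwb} and Lemma~\ref{lembasic}(b) with $u_1=-v^2$, $u_2=-w^2$, $u_3=(\hat q^*)^2$; your added remarks on $D_{\hat c_0}\mM$ and on identifying B2 for $\tilde g_K$ with (2) are accurate elaborations of what the paper leaves implicit.
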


\begin{proof}
(3) $\Leftrightarrow$ (4) follows from \Cref{lemM1M2M3}(h) which tells us that $\hat{v}^2 - v^2 \in \mY + \mZ \subseteq \mM$, while (3) $\Leftrightarrow$ (5) follows from \Cref{lemM1M2M3}(b) as $D_{\hat{c}_0}(\mY + \mZ) = \ker \Gamma$.

(2) $\Leftrightarrow$ (3). Using Equation~\eqref{eqwbhat} and \Cref{lembasic}(a), $\wB (\hat{q},\hat{q}) = \Gamma D_{c_0} [\hat{v}^2 - A (\hat{q}^*)^2] \in \mathrm{im} (\wJ\Gamma)$ if and only if $\hat{v}^2 - A (\hat{q}^*)^2  \in \mM$.

(1) $\Leftrightarrow$ (4). Using Equation~\eqref{eqwb} and \Cref{lembasic}(b), $B(q,q) \in \mathrm{im}\,J$ if and only if $v^2 - A(\hat{q}^*)^2  \in \mM$.
\end{proof}

\vskip 5mm
\begin{lemma}[condition B3]
\label[lemma]{lemB3}
For any fixed $i \in \{1, \ldots, m\}$, the conditions $\partial_{\k_i} g \in\mathrm{im}\,(\wJ \Gamma)$ and $\partial_{\k_i}f_K \in \mathrm{im}\,J$ are equivalent. Hence, condition B3 holds for $\tilde{g}_K$ at $\hat{p}_0:=(x_0, \k_0)$ if and only if it holds for $f_K$ at $p_0:=(\alpha_0, \mu_0, \k_0)$. 
\end{lemma}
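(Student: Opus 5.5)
Both conditions will be reduced, via \Cref{lembasic}, to the same membership statement in $\mM$, following the pattern of \Cref{lemB2}.

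\textbf{Mass action side.} By \eqref{eqJstarg}, $\partial_{\k_i} g(x_0,\k_0) = \Gamma D_{c_0} D_{1/\k_0} e_i = \Gamma D_{c_0}(e_i/\k_{0,i})$. Since $\mathrm{im}\,J^{(\tilde{g}_K)}=\mathrm{im}(\wJ\Gamma)$ and $\partial_{\k_i}g\in\mathrm{im}\,\Gamma$, \Cref{lembasic}(a) with $u_0 = e_i/\k_{0,i}$ gives
\[
\partial_{\k_i} g \in \mathrm{im}(\wJ\Gamma) \iff e_i \in \mM .
\]
The nonzero scalar $1/\k_{0,i}$ does not affect membership in the subspace $\mM$.

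\textbf{Alternative side.} By \eqref{eqJfstar},
\[
\partial_{\k_i} f_K = -\frac{1}{\k_{0,i}}\left(\begin{array}{c}D_{\k_0^W}W e_i\\ Z D_{x_0}\wG e_i\end{array}\right).
\]
This has exactly the form of the left-hand side of \Cref{lembasic}(b), with $u_1 = e_i$, $u_2 = 0$, $u_3=0$, up to the nonzero scalar $-1/\k_{0,i}$. Since $\mathrm{im}\,J$ is a subspace, \Cref{lembasic}(b) gives
\[
\partial_{\k_i} f_K \in \mathrm{im}\,J \iff e_i + A\cdot 0 = e_i \in \mM .
\]

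Combining the two displays, the two memberships are equivalent for every fixed $i$. B3 for each system asserts that some $i$ exists with the corresponding non-membership, so B3 holds for $\tilde{g}_K$ at $\hat{p}_0$ if and only if it holds for $f_K$ at $p_0$.

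The only point needing care is the identification of the restricted vector field's parameter derivative and image. Since $\tilde{g}_K$ takes values in $\mathrm{im}\,\Gamma$, its $\k_i$-derivative is just $\partial_{\k_i} g$ regarded in $\mathrm{im}\,\Gamma$. Its Jacobian image is $\wJ(\mathrm{im}\,\Gamma)=\mathrm{im}(\wJ\Gamma)$, as noted just before \Cref{lemB2}. I expect no real obstacle; the substantive work is already contained in \Cref{lembasic}.
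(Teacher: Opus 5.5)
Your proposal is correct and matches the paper's proof: both memberships are reduced via \Cref{lembasic}(a) and (b) (with $u_2=u_3=0$) to the single condition $e_i/\k_0\in\mM$. Your remarks on the nonzero scalar factors and on identifying $\mathrm{im}\,J^{(\tilde{g}_K)}$ with $\mathrm{im}(\wJ\Gamma)$ are the only details needed beyond that.
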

\begin{proof}
Fix $i \in \{1, \ldots, m\}$. We have, using \eqref{eqJstarg} and \eqref{eqJfstar},
\[
\partial_{\k_i} \tilde{g}_K = \partial_{\k_i} g = \Gamma D_{c_0} (e_i/\k_0), \quad \partial_{\k_i}f_K = \left(\begin{array}{c}-D_{\k_0^W}W\\-ZD_{x_0}\wG\end{array}\right)(e_i/\k_0)\,.
\]
From \Cref{lembasic} both $\partial_{\k_i} g \in\mathrm{im}\,(\wJ \Gamma)$ and $\partial_{\k_i}f_K \in \mathrm{im}\,J$ are equivalent to $e_i/\k_0 \in \mM$, and hence to each other. 
\end{proof}

From now on, we assume that $B(q,q) \in \mathrm{im}\,J$, equivalently, by \Cref{lemB2}, $\wB(\hat{q},\hat{q}) \in \mathrm{im}\,(\wJ \Gamma)$. Recall that nondegeneracy of a potential cusp point of the original system is equivalent to 
\[
\wC(\hat{q}, \hat{q}, \hat{q}) - 3 \wB(\hat{q}, \hat{h}) \not \in \mathrm{im}\,(\wJ \Gamma)\,,
\]
where $\hat{h}$ is any vector in $\wJ^{-1}(\wB(\hat{q},\hat{q}))\cap \mathrm{im}\,\Gamma$. Meanwhile, nondegeneracy of a potential cusp point of the alternative system is equivalent to 
\[
C(q,q,q) - 3B(q,h) \not \in \mathrm{im}\,J\,,
\]
where $h$ is any vector in $J^{-1}(B(q,q))$. Our next lemma shows that these two conditions are equivalent. 

\begin{lemma}[condition B4]
\label[lemma]{lemB4}
Let $\hat{h} \in \wJ^{-1}(\wB(\hat{q},\hat{q}))\cap \mathrm{im}\,\Gamma$ and $h \in J^{-1}(B(q,q))$. The following are equivalent:
\begin{enumerate}
\item $\wC(\hat{q}, \hat{q}, \hat{q}) - 3 \wB(\hat{q}, \hat{h})  \in \mathrm{im}(\wJ\Gamma)$,
\item $C(q,q,q) - 3B(q,h) \in \mathrm{im}\,J$,
\item $\widehat{\Phi}:= 2(\hat{v}^3 - A(\hat{q}^*)^3) + 3\hat{v}\circ (\hat{v}_{\hat{h}} - \hat{v}^2 + A (\hat{q}^*)^2) -3A(\hat{q}^*\circ \hat{h}^*) \in \mM$.
\end{enumerate}
Consequently, condition B4 holds for $\tilde{g}_K$ at $\hat{p}_0:=(x_0, \k_0)$ if and only if it holds for $f_K$ at $p_0:=(\alpha_0, \mu_0, \k_0)$. 
\end{lemma}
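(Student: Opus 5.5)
The plan is to prove (1) $\Leftrightarrow$ (3) and (2) $\Leftrightarrow$ (3) separately, following the pattern of \Cref{lemB2}. For (1) $\Leftrightarrow$ (3), I will expand the original-system quantity with \eqref{eqwchat} and \eqref{eqwbhat1}:
\[
\wC(\hat{q},\hat{q},\hat{q}) - 3\wB(\hat{q},\hat{h}) = \Gamma D_{c_0}\bigl[\hat{v}^3 - 3\hat{v}\circ A(\hat{q}^*)^2 + 2A(\hat{q}^*)^3 - 3\hat{v}\circ\hat{v}_{\hat{h}} + 3A(\hat{q}^*\circ\hat{h}^*)\bigr].
\]
By \Cref{lembasic}(a), membership in $\mathrm{im}(\wJ\Gamma)$ is equivalent to the bracket lying in $\mM$. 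The bracket is $-\widehat{\Phi}$ up to the term $3\hat{v}^3 - 6\hat{v}\circ A(\hat{q}^*)^2 + \ldots$, so some reconciliation is needed. The key fact is that $\hat{h}$ solves $\wJ\hat{h} = \wB(\hat{q},\hat{q})$, which reads $\Gamma D_{c_0}[\hat{v}_{\hat{h}} - \hat{v}^2 + A(\hat{q}^*)^2] = 0$. Hence $\hat{v}_{\hat{h}} - \hat{v}^2 + A(\hat{q}^*)^2 \in \mY+\mZ$. Multiplying by $\hat{v} = v + z$ with $v\in\mY$ and $z\in\mZ$ (\Cref{lemM1M2M3}(e)) and using $\mZ\mY=\mY$ (\Cref{lemM1M2M3}(c)), I will need to control products of $\mY$ with $\mY$. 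Rather than go down that route, I will simply check that the bracket and $-\widehat{\Phi}$ agree modulo $\mM$, by regrouping terms so that their difference is a multiple of $\hat{v}_{\hat{h}}-\hat{v}^2+A(\hat{q}^*)^2$ times scalars plus elements of $\mZ$. If the signs work out directly, the equivalence is simply an identity. This step is a routine coefficient check.

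For (2) $\Leftrightarrow$ (3), I will first describe $h$. Writing $h=(h_1,h_2)$, set $u_0 := v_h = D_{1/\hat{c}_0}Hh_1 \in \mY$ and $u_2 := w_h$. The equation $Jh = B(q,q)$, via \eqref{eqJf} and \eqref{eqwb}, gives
\[
W(u_0 + v^2) = 0, \qquad ZD_{x_0}\bigl[\wG(u_0+v^2) + \wQ(u_2 + w^2) - (\hat{q}^*)^2\bigr] = 0.
\]
I then apply \Cref{lemtransfer} with $x := \hat{h}$, $u_1 := v^2$ (after rewriting the $\wQ$ term) and $u_3 := -(\hat{q}^*)^2$. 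Its first hypothesis $\Gamma D_{c_0}[\hat{v}_{\hat{h}} - v^2 + A(\hat{q}^*)^2]=0$ follows from the defining equation of $\hat{h}$ together with \Cref{lemM1M2M3}(h). The lemma gives $t\in\R$ and $z\in\mZ$ with
\[
\wG(v_h+v^2)+\wQ(w_h+w^2) - (\hat{q}^*)^2 = \hat{h}^* + t\hat{q}^*, \qquad v_h = \hat{v}_{\hat{h}} - v^2 + A(\hat{q}^*)^2 + t\hat{v} + z .
\]
These relations let me eliminate $v_h$, $w_h$ in favour of $\hat{h}$.

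Next I will form $C(q,q,q) - 3B(q,h)$ from \eqref{eqwc} and \eqref{eqwb1}, and write it in the shape required by \Cref{lembasic}(b): first component $D_{\k_0^W}Wu_1'$, second component $ZD_{x_0}[\wG u_1' + \wQ u_2' + u_3']$. The natural choice is $u_1' = 2v^3 + 3v\circ v_h$. For the second component, I substitute the first relation above for $\wG v_h + \wQ w_h$, so that $u_3'$ becomes an explicit combination of $(\hat{q}^*)^3$, $\hat{q}^*\circ\hat{h}^*$, $t(\hat{q}^*)^2$ and $\hat{q}^*\circ(\wG v^2+\wQ w^2)$. The awkward term $\hat{q}^*\circ(\wG v^2+\wQ w^2)$ cannot be absorbed into $u_1'$ directly. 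I expect to handle it using $\hat{q}^* = \wG v + \wQ w$ and the same substitution, or else by noting that $\wG(\cdot)+\wQ(\cdot)$ appears with matching coefficients. By \Cref{lembasic}(b), condition (2) then becomes $u_1' + Au_3' \in \mM$. Finally, I replace $v$ by $\hat{v}$ modulo $\mZ$ and $\mY$ using \Cref{lemM1M2M3}(c),(e),(h), and $v_h$ by the second relation. The $t$-terms produce $t(\hat{v}^2 - A(\hat{q}^*)^2)$ up to sign, which lies in $\mM$ by the standing B2 assumption and \Cref{lemB2}. What remains should reduce to $\widehat{\Phi}$ modulo $\mM$.

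The main obstacle is this last bookkeeping step. It requires showing that cross terms such as $v\circ z$, $z\circ v^2$ and $v\circ(\hat{v}^2 - v^2)$ lie in $\mY+\mZ$. Products of the form $\mY\circ\mY$ are not controlled by \Cref{lemM1M2M3}, so I would aim to write every such product as $\mZ\circ(\text{element of }\mY)$ or $\mZ\circ\mZ$. This is possible because each appearance of $\hat{v}$ differs from $v$ by an element of $\mZ$, and the genuinely $\mY\circ\mY$ terms $v^3$, $v^2\circ\ldots$ cancel between the two sides. I would verify that cancellation first.
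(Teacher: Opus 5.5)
Your architecture matches the paper's. You use \Cref{lembasic}(a) for (1)$\Leftrightarrow$(3). For (2)$\Leftrightarrow$(3) you use \Cref{lembasic}(b) together with \Cref{lemtransfer}, applied with $x=\hat{h}$, $u_0=v_h$, $u_1=v^2$, $u_2=w_h+w^2$, $u_3=-(\hat{q}^*)^2$, and you discard the $t$-terms via B2.

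Two of the difficulties you anticipate do not actually arise:
\begin{itemize}
\item The bracket in (1) is exactly $-\widehat{\Phi}$, so no reconciliation is needed there.
\item The term $-3\hat{q}^*\circ(\wG v^2+\wQ w^2)$ from $C(q,q,q)$ combines with the term $-3\hat{q}^*\circ(\wG v_h+\wQ w_h)$ from $-3B(q,h)$ into $-3\hat{q}^*\circ(y^*+(\hat{q}^*)^2)$. Here $y^*:=\wG(v_h+v^2)+\wQ(w_h+w^2)-(\hat{q}^*)^2=\hat{h}^*+t\hat{q}^*$, so the awkward term does not survive.
\end{itemize}
After \Cref{lembasic}(b), the substitution for $v_h$, and the B2 step, condition (2) becomes $\Phi\in\mM$, where $\Phi:=-v^3-2A(\hat{q}^*)^3+3v\circ A(\hat{q}^*)^2+3v\circ\hat{v}_{\hat{h}}-3A(\hat{q}^*\circ\hat{h}^*)$.

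The genuine gap is the final comparison of $\Phi$ with $\widehat{\Phi}$. You correctly flag this as the main obstacle, but the resolution you propose fails. Write $d:=\hat{v}-v\in\mZ$. Then
\[
\widehat{\Phi}-\Phi = 3d\circ\bigl(\hat{v}_{\hat{h}}+A(\hat{q}^*)^2\bigr) - 3d\circ v^2 - 3d^2\circ v - d^3 .
\]
So the $v^2$-type terms do not cancel between the two sides. Moreover, $d\circ v^2$, $d\circ\hat{v}_{\hat{h}}$ and $d\circ A(\hat{q}^*)^2$ are not of the form $\mZ\circ\mY$ or $\mZ\circ\mZ$, and nothing in \Cref{lemM1M2M3} places any of them individually in $\mM$. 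Rewriting every cross term as such a product therefore cannot work.

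The missing idea is the one you set aside in your first paragraph. Take the relation $\hat{v}_{\hat{h}}-\hat{v}^2+A(\hat{q}^*)^2\in\mY+\mZ$, which follows from $\wJ\hat{h}=\wB(\hat{q},\hat{q})$, and multiply it by $d$ rather than by $\hat{v}$. Since $\mZ\circ(\mY+\mZ)\subseteq\mY+\mZ$ by \Cref{lemM1M2M3}(c), this involves no $\mY\circ\mY$ products. Regrouping then gives
\[
\widehat{\Phi}-\Phi = 3\bigl(\hat{v}_{\hat{h}}-\hat{v}^2+A(\hat{q}^*)^2\bigr)\circ d + 3v\circ d^2 + 2d^3 \in \mY+\mZ\subseteq\mM ,
\]
which is exactly how the paper closes the argument.
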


\begin{proof}
Using \eqref{eqwbhat}, the equation $\wJ \hat{h} = \wB(\hat{q},\hat{q})$ tells us that $\Gamma D_{c_0}[\hat{v}_{\hat{h}} - \hat{v}^2 + A(\hat{q}^*)^2] = 0$, equivalently, using \Cref{lemM1M2M3}(b)~and~(h),
\begin{equation}
\label{eqcstar1}
\Gamma D_{c_0}[\hat{v}_{\hat{h}} - v^2 + A(\hat{q}^*)^2] = 0\,.
\end{equation}
Consequently, $\hat{v}_{\hat{h}} - \hat{v}^2 + A(\hat{q}^*)^2 \in \mY + \mZ$. Note, further, that $\hat{h} \in \mathrm{im}\,\Gamma$ implies that $\hat{h}^* \in \mathrm{im}(D_{1/x_0}\Gamma)$ and $\hat{v}_{\hat{h}} \in \mA$. 

On the other hand, expanding $Jh = B(q,q)$ implies that $v_h+v^2 \in \mathrm{im}[A\,|\,\bm{1}_\mP]$, and 
\begin{equation}
\label{eqcstar2}
ZD_{x_0}y^* = 0, \quad \mbox{where} \quad y^*:=(\wG (v_h+v^2) + \wQ (w_h+w^2) - (\hat{q}^*)^2)\,,
\end{equation}
hence $y^* \in \mathrm{im}D_{1/x_0}\Gamma$.  

(1) $\Leftrightarrow$ (3). Recalling equations \eqref{eqwchat} and \eqref{eqwbhat1}, 
\[
\wC(\hat{q}, \hat{q}, \hat{q}) - 3 \wB(\hat{q}, \hat{h}) = \Gamma D_{c_0}\left[\hat{v}^3 - 3\hat{v}\circ (A (\hat{q}^*)^2 + \hat{v}_{\hat{h}}) + 2A(\hat{q}^*)^3  +3A(\hat{q}^*\circ \hat{h}^*)\right]\,.
\]
Thus, by \Cref{lembasic}(a), the condition $\wC(\hat{q}, \hat{q}, \hat{q}) - 3 \wB(\hat{q}, \hat{h}) \in \mathrm{im}(\wJ\Gamma)$ is equivalent to
\[
\widehat{\Phi}:= 2(\hat{v}^3 - A(\hat{q}^*)^3) + 3\hat{v}\circ (\hat{v}_{\hat{h}} - \hat{v}^2+A (\hat{q}^*)^2) -3A(\hat{q}^*\circ \hat{h}^*) \in \mM
\]

(2) $\Leftrightarrow$ (3). Recalling Equations \eqref{eqwc} and \eqref{eqwb1},
\[
C(q,q,q) - 3B(q,h) = \left(\begin{array}{c} D_{\k_0^W}W (2v^3 +3 v\circ v_h)\\Z D_{x_0}[\wG(2 v^3 + 3(v \circ v_h)) +\wQ(2w^3 +3(w\circ w_h)) -2(\hat{q}^*)^3 - 3\hat{q}^* \circ y^*] \end{array}\right)\,.
\]
By \Cref{lembasic}(b), $C(q,q,q) - 3B(q,h) \in \mathrm{im}\,J$ if and only if 
\[
\Phi' := 2 v^3 + 3(v \circ v_h) - 2A(\hat{q}^*)^3 - 3A(\hat{q}^* \circ y^*) \in \mM\,.
\]
By \Cref{lemtransfer} applied to \eqref{eqcstar1}~and~\eqref{eqcstar2}, $y^* = \hat{h}^* + t\hat{q}^*$, and $v_h = -v^2 + A(\hat{q}^*)^2 + \hat{v}_{\hat{h}} +t\hat{v} + z$ for some $t \in \R$, $z \in \mZ$. Substituting into the expression for $\Phi'$, and observing that $v^2 -A(\hat{q}^*)^2 \in \mM$ (\Cref{lemB2}), and $v \circ z \in \mM$ (\Cref{lemM1M2M3}(c)) gives that $\Phi' \in \mM$ if and only if 
\[
\Phi:= -v^3 -[2A(\hat{q}^*)^3 - 3v \circ (A(\hat{q}^*)^2)] + 3[v\circ\hat{v}_{\hat{h}} - A(\hat{q}^* \circ \hat{h}^*)] \in \mM\,.
\]
It remains to prove that $\widehat{\Phi} - \Phi \in \mM$. The reader may check that
\[
\widehat{\Phi}- \Phi = 3(\hat{v}_{\hat{h}} - \hat{v}^2+A (\hat{q}^*)^2)\circ (\hat{v}-v) +3v\circ(\hat{v}-v)^2 + 2(\hat{v}-v)^3\,,
\]
which is in $\mM$ as $\hat{v}-v \in \mZ$ (\Cref{lemM1M2M3}(e)), $v \in \mY$, and $\hat{v}_{\hat{h}} - \hat{v}^2+A (\hat{q}^*)^2 \in \mY+\mZ$ from above.
We have also used \Cref{lemM1M2M3}(c). This completes the proof. 
\end{proof}

Finally, we show that Condition B5 holds for $\tilde{g}_K$ at $\hat{p}_0:=(x_0, \k_0)$ if and only if it holds for $f_K$ at $p_0:=(\alpha_0, \mu_0, \k_0)$. Fix $i,j \in \{1, \ldots, m\}$ with $i \neq j$, and $(K_1, K_2) \neq (0,0)$. Define the operator $\mD:= K_1 \partial_{\k_i} + K_2 \partial_{\k_j}$, and the vector $\theta:=K_1 e_i/\k_0 + K_2 e_j/\k_0$. Note that $\mD \tilde{g}_K = \mD g$. 
\begin{lemma}[condition B5]
\label[lemma]{lemB5}
(i) $\mD g \in \mathrm{im}\,(\wJ \Gamma)$ if and only if $\mD f_K \in \mathrm{im}\,J$, with both being equivalent to $\theta \in \mM$. (ii) Suppose that $\mD g \in \mathrm{im}\,(\wJ \Gamma)$, equivalently, $\mD f_K \in \mathrm{im}\,J$. Then $\mD[\partial_x g]\hat{q} - \wB(\hat{q},\hat{h}) \in \mathrm{im}\,(\wJ \Gamma)$ if and only if $\mD[\partial_{(\alpha, \mu)} f]q  - B(q,h) \in \mathrm{im}\,J$, where $\hat{h}$ is any element of $\wJ^{-1}(\mD g) \cap \mathrm{im}\,\Gamma$, and $h$ is any element of $J^{-1}(\mD f)$. Consequently, condition B5 holds for $\tilde{g}_K$ at $\hat{p}_0:=(x_0, \k_0)$ if and only if it holds for $f_K$ at $p_0:=(\alpha_0, \mu_0, \k_0)$. 
\end{lemma}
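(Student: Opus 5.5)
For part (i) I would follow the proof of \Cref{lemB3}, using linearity in $\mD$. From \eqref{eqJstarg} and \eqref{eqJfstar},
\[
\mD g = \Gamma D_{c_0}\theta, \qquad \mD f_K = -\left(\begin{array}{c}D_{\k_0^W}W\\ZD_{x_0}\wG\end{array}\right)\theta\,.
\]
\Cref{lembasic}(a) then shows that $\mD g \in \mathrm{im}\,(\wJ\Gamma)$ if and only if $\theta \in \mM$. \Cref{lembasic}(b), with $u_1=-\theta$ and $u_2=u_3=0$, shows that $\mD f_K\in\mathrm{im}\,J$ if and only if $-\theta\in\mM$. Since $\mM$ is a subspace, the two conditions coincide.

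For part (ii), on the original side I would differentiate $g_x = \Gamma D_{\k\circ x^A}AD_{1/x}$ along $\mD$ to get $\mD[\partial_x g]\hat{q} = \Gamma D_{c_0}(\theta\circ\hat{v})$. Combining this with \eqref{eqwbhat1} gives
\[
\mD[\partial_x g]\hat{q}-\wB(\hat{q},\hat{h}) = \Gamma D_{c_0}\bigl[\theta\circ\hat{v} - \hat{v}\circ\hat{v}_{\hat{h}} + A(\hat{q}^*\circ\hat{h}^*)\bigr]\,.
\]
By \Cref{lembasic}(a), membership in $\mathrm{im}\,(\wJ\Gamma)$ is equivalent to $\widehat{\Phi}:=\theta\circ\hat{v} - \hat{v}\circ\hat{v}_{\hat{h}} + A(\hat{q}^*\circ\hat{h}^*)\in\mM$. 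The defining equation $\wJ\hat{h}=\mD g$ reads $\Gamma D_{c_0}[\hat{v}_{\hat{h}}-\theta]=0$, so $\hat{v}_{\hat{h}}-\theta\in\mY+\mZ$. Here $\hat{h}\in\mathrm{im}\,\Gamma$.

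On the alternative side, the first block of $\partial_{(\alpha,\mu)}f_K$ in \eqref{eqJf} does not depend on $\k$. The $\k$-dependence of the second block enters only through $x=F(\alpha,\mu,\k)$, with $\mD x = -D_{x_0}\wG\theta$. Hence
\[
\mD[\partial_{(\alpha,\mu)}f_K]q = \left(\begin{array}{c}0\\-ZD_{x_0}[(\wG\theta)\circ\hat{q}^*]\end{array}\right).
\]
The equation $Jh=\mD f_K$ gives $v_h+\theta\in\mathrm{im}\,[A\,|\,\bm{1}_\mP]$ and $ZD_{x_0}y^*=0$, where $y^*:=\wG(v_h+\theta)+\wQ w_h$. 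Subtracting \eqref{eqwb1}, the quantity $\mD[\partial_{(\alpha,\mu)}f_K]q-B(q,h)$ takes the form in \Cref{lembasic}(b) with
\[
u_1=v\circ v_h,\qquad u_2=w\circ w_h,\qquad u_3=-\hat{q}^*\circ y^*.
\]
So it lies in $\mathrm{im}\,J$ if and only if $\Phi':=v\circ v_h - A(\hat{q}^*\circ y^*)\in\mM$.

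The key step, which I expect to be the main obstacle, is linking $(v_h,y^*)$ to $\hat{h}$. I would apply \Cref{lemtransfer} with $x=\hat{h}$, $u_0=v_h\in\mY$, $u_1=\theta$, $u_2=w_h$ and $u_3=0$. Both equations of \eqref{eqtech} are exactly the two relations just derived. The lemma yields $t\in\R$ and $z\in\mZ$ with
\[
y^*=\hat{h}^*+t\hat{q}^*, \qquad v_h=\hat{v}_{\hat{h}}-\theta+t\hat{v}+z\,.
\]
Substituting into $\Phi'$, the $t$-terms combine into $t(v\circ\hat{v}-A(\hat{q}^*)^2)$. This lies in $\mM$ by the standing B2 assumption (\Cref{lemB2}(4)), together with $v\circ(\hat{v}-v)\in\mY$ from \Cref{lemM1M2M3}(c),(e). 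The term $v\circ z$ lies in $\mY$ by \Cref{lemM1M2M3}(c). What remains is $\Phi'\equiv -\bigl[\theta\circ v - v\circ\hat{v}_{\hat{h}} + A(\hat{q}^*\circ\hat{h}^*)\bigr]$ modulo $\mM$.

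Finally, this bracket differs from $\widehat{\Phi}$ by $(\hat{v}-v)\circ(\theta-\hat{v}_{\hat{h}})$. This is an element of $\mZ$ times an element of $\mY+\mZ$, so it lies in $\mY+\mZ\subseteq\mM$ by \Cref{lemM1M2M3}(c),(e). Hence $\Phi'\in\mM$ if and only if $\widehat{\Phi}\in\mM$. With part (i), this gives the equivalence of B5 for $\tilde{g}_K$ and $f_K$ for every choice of $i,j$ and $(K_1,K_2)$. The places that need care are the sign bookkeeping in $u_3$ and checking that the hypotheses of \Cref{lemtransfer} ($x\in\mathrm{im}\,\Gamma$, $u_0\in\mY$, $u_0+u_1\in\mathrm{im}\,[A\,|\,\bm{1}_\mP]$) hold exactly as stated.
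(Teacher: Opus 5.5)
Your proposal is correct and follows essentially the same route as the paper. Part (i) goes through \Cref{lembasic}. Part (ii) reduces both conditions to membership of $\theta\circ\hat{v} - \hat{v}\circ\hat{v}_{\hat{h}} + A(\hat{q}^*\circ\hat{h}^*)$ in $\mM$, using \Cref{lemtransfer} with exactly your substitutions. The only difference is cosmetic: the paper replaces $v$ by $\hat{v}$ in $v\circ v_h$ before substituting, using $(\hat{v}-v)\circ v_h\in\mY$, whereas you make the swap afterwards via $(\hat{v}-v)\circ(\theta-\hat{v}_{\hat{h}})\in\mY+\mZ$. Both steps are justified by \Cref{lemM1M2M3}(c),(e).
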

\begin{proof}
First, we have, using \eqref{eqJstarg} and \eqref{eqJfstar},
\[
\mD g = \Gamma D_{c_0}\theta, \quad \mD f_K=\left(\begin{array}{c}-D_{\k_0^W}W\\-ZD_{x_0}\wG \end{array}\right) \theta\,.
\]
By \Cref{lembasic}, both $\mD g \in \mathrm{im}\,(\wJ \Gamma)$ and $\mD f_K \in \mathrm{im}\,J$ are equivalent to $\theta \in \mM$, proving (i). Let us now assume this to be the case. To prove (ii), we show that both $\mD[\partial_x g]\hat{q} - \wB(\hat{q},\hat{h}) \in \mathrm{im}\,(\wJ \Gamma)$ and $\mD[\partial_{(\alpha, \mu)} f_K]q  - B(q,h) \in \mathrm{im}\,J$ are equivalent to the condition $\hat{v} \circ \theta - \hat{v} \circ \hat{v}_{\hat{h}} + A(\hat{q}^* \circ \hat{h}^*) \in \mM$.

We easily calculate that $\mD[\partial_x g]\hat{q} = \Gamma D_{c_0}(\hat{v} \circ \theta)$; hence, using \eqref{eqwbhat1}, 
\[
\mD[\partial_x g]\hat{q} - \wB(\hat{q},\hat{h}) = \Gamma D_{c_0}(\hat{v} \circ \theta - \hat{v} \circ \hat{v}_{\hat{h}} + A(\hat{q}^* \circ \hat{h}^*))\,.
\]
By \Cref{lembasic}(a), the condition $\mD[\partial_x g]\hat{q} - \wB(\hat{q},\hat{h}) \in \mathrm{im}\,(\wJ \Gamma)$ is equivalent to
\begin{equation}
\label{eqB5a}
\Psi:=\hat{v} \circ \theta - \hat{v} \circ \hat{v}_{\hat{h}} + A(\hat{q}^* \circ \hat{h}^*) \in \mM\,.
\end{equation}
On the other hand, using \eqref{eqJf}~and~\eqref{eqhatq},
\[
\mD[\partial_{(\alpha, \mu)}f_K]q = \mD \left(\begin{array}{c}D_{\hat{c}_0^W}Wv \\ZD_{(\hat{c}_0/\k)^{\wG}\circ \mu_0^{\wQ}}\hat{q}^*\end{array}\right) = \left(\begin{array}{c}0 \\-ZD_{x_0}(\hat{q}^* \circ (\wG \theta))\end{array}\right)\,.
\]
Additionally, using \eqref{eqwb1}, 
\[
\mD[\partial_{(\alpha, \mu)} f_K]q  - B(q,h) = \left(\begin{array}{c}D_{\k_0^W} W (v\circ v_{h})\\Z D_{x_0} [\wG (v \circ v_{h}) + \wQ (w\circ w_{h}) - \hat{q}^*\circ(\wG (v_{h}+\theta) + \wQ w_{h})]\end{array}\right)\,.
\]
By \Cref{lembasic}(b), the condition $\mD[\partial_{(\alpha, \mu)} f_K]q  - B(q,h) \in \mathrm{im}\,J$ is thus equivalent to 
\[
v \circ v_{h} - A(\hat{q}^* \circ (\wG (v_{h}+\theta) + \wQ w_{h}))  \in \mM\,.
\]
As $\hat{v} - v \in \mZ$ (\Cref{lemM1M2M3}(e)) and $v_{h} \in \mY$, hence $(\hat{v} - v) \circ v_{h} \in \mY$ (\Cref{lemM1M2M3}(c)), this is equivalent to
\[
\Psi':= \hat{v} \circ v_{h} - A(\hat{q}^* \circ (\wG (v_{h}+\theta) + \wQ w_{h}))  \in \mM\,.
\]
It remains to show that $\Psi \in \mM$ if and only if $\Psi' \in \mM$. The condition $\wJ\hat{h} = \mD g$ translates to:
\begin{equation}
\label{eqtrans1}
\Gamma D_{c_0}[\hat{v}_{\hat{h}}-\theta]=0\,.
\end{equation}
The condition $Jh=\mD f_K$ translates to $v_{h}+\theta \in \mathrm{im}\,[A\,|\,\bm{1}_\mP]$ and
\begin{equation}
\label{eqtrans2}
ZD_{x_0}[\wG (v_{h} + \theta) + \wQ w_{h}]=0\,.
\end{equation}
Applying \Cref{lemtransfer} to Equations~\eqref{eqtrans1}~and~\eqref{eqtrans2} gives $\wG (v_h+\theta) + \wQ w_{h} = \hat{h}^* + t \hat{q}^*$ and $v_{h} = -\theta + \hat{v}_{\hat{h}} + t\hat{v} + z$ for some $t \in \R$ and some $z \in \mZ$. Substituting into the expression for $\Psi'$ gives:
\[
\Psi':= -\hat{v} \circ \theta + \hat{v} \circ \hat{v}_{\hat{h}} + \hat{v} \circ z - A(\hat{q}^* \circ \hat{h}^*) + t(\hat{v}^2-A(\hat{q}^*)^2)\,.
\]
Observing that $\hat{v} \circ z \in \mM$ (\Cref{lemM1M2M3}(c)) and $\hat{v}^2 - A(\hat{q}^*)^2 \in \mM$ (\Cref{lemB2}), we see $\Psi' + \Psi \in \mM$, hence $\Psi' \in \mM$ is equivalent to $\Psi \in \mM$. 
\end{proof}

\section{Some corollaries and conclusions}
\label{seccorollaries}

\subsection{Full-rank networks}

We start with some claims about full-rank networks (i.e., networks such that $n=r$) which are easily inferred from \cite{banajifeliu26}, but can also be derived from the main result of this paper. First observe that in full-rank networks which are not $\mP$-overdetermined, the alternative system is precisely the solvability system $h(\alpha)^W - \k^W=0$ which consists of $m-n-p$ equations in the same number of variables (see \Cref{remspecial}). 

\begin{proposition}
\label[proposition]{propfull1}
Consider an $(n,m,n)$ network. 
\begin{itemize}
\item[(i)] If $m\leq n+p$ then the network forbids fold bifurcations. 
\item[(ii)] Suppose that $m > n+p$, but the network has no more than $n+p$ distinct sources, and whenever reactions $i$ and $j$ share the same source, then they belong to the same member of the partition $\mP$. Then the network forbids fold bifurcations. 
\item[(iii)] If $m\leq n+p+1$ then the network forbids cusp bifurcations. 
\end{itemize}
\end{proposition}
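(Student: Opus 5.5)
The plan is to reduce everything, via \Cref{foldcuspmain}, to the alternative system. For a full-rank network we may assume it is not $\mP$-overdetermined, since otherwise it is degenerate and admits no fold or cusp bifurcation. In that case \Cref{remspecial} tells us that $n=s_\mP$, that the variables $\mu$ are absent, and that $f_K$ reduces to the solvability system
\[
f(\alpha,\k)=h(\alpha)^W-\k^W ,
\]
a system of $m-n-p$ equations in $m-n-p$ unknowns $\alpha$. A parameter $\k$ enters this system only through the inner parameters $\hat\k=\k^W$. The map $\k\mapsto\k^W$ has surjective derivative, so by \Cref{prop:reparam} transversality with respect to $\k$ is equivalent to transversality with respect to $\hat\k$, which is an $(m-n-p)$-dimensional parameter.

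For (i) and (iii), I would count dimensions. If $m\le n+p$, then $W$ is empty, and since also $Z$ is empty, the alternative system is empty (last bullet of \Cref{remspecial}). Consequently $\ker J$ cannot be one-dimensional, and no degenerate point exists. An alternative route for this case is \Cref{prop:natural}(d), which gives the same conclusion directly. If $m=n+p+1$, the alternative system is a single scalar equation $\phi(\alpha)=\hat\k$ in the one parameter $\hat\k=\k^W$, after taking logarithms and applying \Cref{prop:power}. Condition B5 needs two independent directions $\mD=K_1\partial_{\k_i}+K_2\partial_{\k_j}$. Each such $\mD f$ is a multiple of the derivative in the single direction $\hat\k$, so some nonzero $(K_1,K_2)$ gives $\mD f=0$ together with $\mD[f_\alpha]=0$, taking $h=0$ and using $B(q,0)=0$. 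Hence B5 fails, and by \Cref{lemB5} it fails for the original system as well.

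For (ii), I plan to show that no fold occurs by proving that B3 always fails at any degenerate point. By \Cref{lemB3}, B3 for $\k_i$ is equivalent to $e_i/\k_0\notin\mM$. Since $\mA+\mZ\subseteq\mM$, it suffices to show that $\R^m=\mathrm{im}[A\,|\,\bm{1}_\mP]+\mA+\mZ+\mY$ fails only in the direction already captured by the nondegenerate part. A cleaner route is to work with the natural coordinates of \Cref{prop:natural}: the kinetics depend on $\k$ only through $\k^W$, and under the source hypothesis we can choose $W$ so that $\k^W$ involves only ratios of rate constants of reactions sharing a source and a block. Concretely, let $i,j$ be reactions with the same source in the same block. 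Then $e_i-e_j\in\ker[A\,|\,\bm{1}_\mP]^\top$, and the hypothesis that there are at most $n+p$ distinct sources gives $\mathrm{rank}[A\,|\,\bm{1}_\mP]\le n+p$. Hence $\ker[A\,|\,\bm{1}_\mP]^\top$ is spanned by such differences $e_i-e_j$; this is the step I expect to need the most care, namely a rank count showing that the span of the differences already has dimension $m-\mathrm{rank}[A\,|\,\bm{1}_\mP]$.

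With this in hand, $h(\alpha)^W-\k^W=0$ says that each ratio $h_i(\alpha)/h_j(\alpha)$ equals $\k_i/\k_j$. Those reactions have identical monomials, so the corresponding rows of $AD_{1/x_0}\Gamma$ coincide. The key consequence is that $\k$ then enters $g$ only through the sums $\k_i+\k_j$ of rate constants over reactions sharing a source. Merging such reactions gives an equivalent network with at most $n+p$ reactions. A reparameterisation via \Cref{prop:reparam} would then let part (i) apply to the merged network. The main obstacle is to check that merging preserves full rank and the factoring over $\mP$; this is where the hypothesis that equal-source reactions lie in the same block is used.
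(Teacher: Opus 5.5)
Your treatment of (i) and (iii) is fine and essentially matches the paper. If the network is not $\mP$-overdetermined, then $m\le n+p$ forces $m=n+p$ and an empty alternative system, so there are no degenerate equilibria at all. If $m=n+p+1$, you get one scalar equation in $\alpha$ that depends on $\k$ only through the single parameter $\k^W$, so for any pair $\k_i,\k_j$ some nonzero $\mD$ annihilates both $f$ and $f_\alpha$, and B5 fails.

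Part (ii), however, has a genuine gap. Its target, ``B3 always fails at any degenerate point'', can never be achieved in any mass action network. At every point where $\ker\wJ\cap\mathrm{im}\,\Gamma$ is one-dimensional, $g_\k(x_0,\k_0)=\Gamma D_{x_0^A}$ has image $\mathrm{im}\,\Gamma$, of dimension $r$, whereas $\mathrm{im}(\wJ\Gamma)$ has dimension $r-1$. So some $\partial_{\k_i}g\notin\mathrm{im}(\wJ\Gamma)$, and B3 holds. In your own language: $\dim\mM=m-1$ by \Cref{lemM1M2M3}(i), so the vectors $e_i/\k_0$, which span $\R^m$, cannot all lie in $\mM$.

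A concrete instance satisfying the hypotheses of (ii), with $n=p=1$, $m=4$ and two distinct sources, is $\fX\to 0$, $\fX\to 2\fX$, $2\fX\to\fX$, $2\fX\to 3\fX$, giving $\dot x=(\k_2-\k_1)x+(\k_4-\k_3)x^2$. When $\k_1=\k_2$ and $\k_3=\k_4$, every $x_0>0$ is a simply degenerate equilibrium with $J=0$. There $g_{\k_1}=-x_0\notin\mathrm{im}\,J$, so B3 holds. What fails is nondegeneracy, since $g_{xx}=0$.

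The merging route does not rescue this. The claim that $\k$ enters $g$ only through sums $\k_i+\k_j$ is false, because reactions with a common source $a$ can have different products. So $g$ depends on $x^{a}(\k_i\Gamma_i+\k_j\Gamma_j)$, where the columns $\Gamma_i,\Gamma_j$ of $\Gamma$ generally differ. The ``merged'' network therefore has a $\k$-dependent stoichiometric matrix, which loses rank precisely where the degenerate equilibria live. In the example above it is $(\k_2-\k_1,\ \k_4-\k_3)$, which vanishes there, so part (i) cannot be invoked.

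The missing idea is one step from what you already established.
\begin{itemize}
\item Choose the rows of $W$ as differences $e_i-e_j$ of same-source reactions; your rank count showing these span $\ker[A\,|\,\bm{1}_\mP]^\top$ is correct.
\item Each solvability equation $h_i(\alpha)/h_j(\alpha)=\k_i/\k_j$, multiplied by the positive factor $\k_jh_j(\alpha)$ (allowed by \Cref{prop:prefactor}), becomes $\k_jh_i(\alpha)-\k_ih_j(\alpha)=0$.
\item This is affine in $\alpha$ because $h$ is affine, so $B(q,q)=0\in\mathrm{im}\,J$ at every degenerate point.
\item Hence B2 holds for $f_K$ and, by \Cref{lemB2}, for $\tilde{g}_K$, so no fold is ever nondegenerate.
\end{itemize}
This is the paper's argument.
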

\begin{proof}
Parts (i) and (ii) are immediate from \cite[Theorem~4.24]{banajifeliu26}, but we sketch the proof in light of the results here. An $(n,m,n)$ network with $n+p$ or fewer distinct sources is either $\mP$-overdetermined (in which case the result is trivial), or has exactly $n+p$ distinct sources (hence $m \geq n+p$). In the case $m=n+p$, the alternative system is empty and (i) is trivial (see the final case in \Cref{remspecial}). In (ii), the assumptions about the partition imply that the alternative system is, after clearing denominators, linear (see \cite[proof of Theorem~4.24]{banajifeliu26} for the details), and the result then follows immediately from \Cref{foldcuspmain}. (We remark that clearing denominators is justified by \Cref{prop:prefactor}.)

(iii) From part (i), this is certainly true if $m \leq n+p$, so assume $m=n+p+1$. If the network is $\mP$-overdetermined the result is immediate. Assume otherwise, so that $s_\mP = n$. Consequently, $\mathrm{rank}\,[A\,|\,\bm{1}_\mP] = n+p$ implying that $\wQ$ is empty and $W$ has a single row. The alternative equations consist of a single equation of the form $h(\alpha)^W - \k^W = 0$ in the scalar variable $\alpha$, dependent on the single parameter $\k^W$, and thus a cusp bifurcation is forbidden by \Cref{foldcuspmain}. 
\end{proof}

We remark that although fold bifurcations are forbidden in $(n,n+1,n)$ networks by \Cref{propfull1}(i), other bifurcations, such as Andronov--Hopf, and even Bautin, bifurcations can occur in bimolecular $(n,n+1,n)$ networks \cite{banajiborosnonlinearity}. Although cusp bifurcations are forbidden in $(n,n+2,n)$ networks by \Cref{propfull1}(iii), quadratic $(2,4,2)$ networks can admit three, positive, nondegenerate equilibria (see \cite[Remark~38]{BBH2024smallbif}).

\subsection{$\mP$-toric networks}
Recall that $\mP$-toric networks are defined as networks such that $s_{\mP} = r$. All full-rank networks which admit nondegenerate equilibria are $\mP$-toric; but networks with $r < n$ can also be $\mP$-toric. The special structure of the alternative equations for $\mP$-toric networks can be helpful when computing conditions for fold and cusp bifurcations. In particular, $\mP$-toric networks give rise to {\em block triangular} alternative systems of the form
\[
f_1(\alpha, \k) = 0, \quad f_2(\alpha, \mu, \k, K) = 0\,,
\]
where the solvability system, $f_1(\alpha, \k):=h(\alpha)^W-\k^W=0$, is a system of $m-r-p$ equations in the $m-r-p$ variables $\alpha$, and $f_2(\alpha, \mu, \k,K):=Z[h(\alpha)^{\wG}\circ \mu^{\wQ}]-K = 0$ is a system of $n-s_\mP$ equations in the variables $\alpha$ and the $n-s_\mP$ variables $\mu$. In the special case that $n=r$ (full rank networks), the second subsystem is empty; this case was discussed in the previous section and so we now assume that $r<n$.

Suppose that $(x_0, \k_0)$ satisfies $g(x_0, \k_0)$, equivalently $(\alpha_0, \mu_0, \k_0,K)$ is a zero of $f:=(f_1, f_2)$ where, as usual, $\alpha_0, \mu_0$ and $K$ are defined uniquely by $x_0 = F(\alpha_0, \mu_0, \k_0)$, and $K = Zx_0$ (see \Cref{thm:degensolvability}(i)). Suppose further than $\ker J^{(g)} (x_0, \k_0) \cap \mathrm{im}\,\Gamma$ is one-dimensional, equivalently by \Cref{thm:degensolvability}(ii), $\ker J^{(f)}(\alpha_0, \mu_0, \k_0,K)$ is one-dimensional. Then, by \Cref{foldcuspmain}, $\tilde{g}_K$ satisfies condition B2--B5 at $(x_0, \k_0)$ if and only if $(f_1, f_2)$ satisfies the corresponding condition at $(\alpha_0, \mu_0, \k_0,K)$. In this case, from the block-triangular structure of $\partial_{(\alpha,\mu)}(f_1, f_2)$, one of the following three situations must occur. 
\begin{itemize}
\item[(i)] $\partial_\alpha f_1(\alpha_0, \k_0)$ is singular (with one-dimensional kernel), while $\partial_{\mu} f_2(\alpha_0, \mu_0, \k_0)$ is nonsingular. In this case, applying \Cref{prop:elimination} to the alternative equations, it suffices to examine $f_1$ at $(\alpha_0, \k_0)$ in computations checking conditions B2--B5 for fold and cusp bifurcations.
\item[(ii)] $\partial_{\mu} f_2(\alpha_0, \mu_0, \k_0)$ is singular (with one-dimensional kernel), while $\partial_\alpha f_1(\alpha_0, \k_0)$ is nonsingular. In this case, we may again apply \Cref{prop:elimination}: we use the implicit function theorem to solve locally to get $\alpha=\phi(\k)$ s.t. $\alpha_0 = \phi(\k_0)$ and $f_1(\phi(\k),\k)=0$ for $\k$ in some neighbourhood of $\k_0$, define $\hat{f}(\mu, \k, K) = f_2(\phi(\k), \mu, \k, K)$, and it suffices to examine $\hat{f}$ in computations checking conditions B2--B5 for fold or cusp bifurcation conditions. 
\item[(iii)] Both $\partial_\alpha f_1(\alpha_0, \k_0)$ and $\partial_{\mu} f_2(\alpha_0, \mu_0, \k_0)$ are singular (we still have that $\ker J^{(f)}(\alpha_0, \mu_0, \k_0,K)$ is one-dimensional).
\end{itemize}
In case (i) we end up examining a system of $m-r-p$ equations in as many variables; while in case (ii) we end up examining a system of $n-s_{\mP}$ equations in as many variables. In case (iii) we still have to examine the full system, but we remark that in both cases (ii) and (iii) $\ker J^{(f)}(\alpha_0, \mu_0, \k_0,K)$ is spanned by a vector of the form $(0,q_2)$ where $q_2 \in \ker \partial_{\mu} f_2(\alpha_0, \mu_0, \k_0)$. Consequently, $v\equiv 0$, $\hat{v} \in \mZ$, and $\hat{q}^* = \wQ w$, simplifying many of the expressions which appear in \Cref{lemB2,lemB3,lemB4,lemB5}. 

As an example of how we may apply these observations, we present some consequences for $\mP$-toric, quadratic networks with a single linear conservation law. As usual, let $(x_0, \k_0)$ and $(\alpha_0, \mu_0, \k_0,K)$ satisfy $g(x_0, \k_0)=0$ and $f_K(\alpha_0, \mu_0, \k_0) = 0$ with $x_0 = F(\alpha_0, \mu_0, \k_0)$ and $K = Zx_0$.

\begin{proposition}
\label[proposition]{thmPtoricquad}
Let $\mP$ be the trivial partition. Consider a $\mP$-toric, quadratic $(n,m,n-1)$ network, and suppose that $(x_0, \k_0)$ is a simply degenerate point of $\tilde{g}_K$. Then the network has a nondegenerate cusp bifurcation, unfolded by its parameters, at $(x_0, \k_0)$, if and only if $f_1$ satisfies conditions B2, B4 and B5 at $(\alpha_0, \k_0)$. 

\end{proposition}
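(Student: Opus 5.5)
The plan is to use the block-triangular structure of the alternative equations for $\mP$-toric networks described just above. Since $\mP$ is trivial ($p=1$) and $s_\mP=r=n-1$, the system splits as $f_1(\alpha,\k)=h(\alpha)^W-\k^W=0$, with $m-n$ equations in $m-n$ variables, and $f_2(\alpha,\mu,\k,K)=Z[(h(\alpha)/\k)^{\wG}\circ\mu^{\wQ}]-K=0$. Here $Z$ is a single row, $\mu$ is a scalar, and $\wQ$ is a single column $\wQ=(\hat Q_1,\ldots,\hat Q_n)^\top$ with $A\wQ=-t\bm 1$ for some $t\in\R$.

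By \Cref{foldcuspmain}(1), at a simply degenerate point of $\tilde g_K$ the cusp conditions for the network are equivalent to B2, B4 and B5 for $f_K=(f_1,f_2)$ at $p_0$. There are then three cases, (i)--(iii), as listed above. In case (i), $\partial_\mu f_2$ is nonsingular, so \Cref{prop:elimination} (eliminating $\mu$ via $f_2$) shows that B2--B5 for $(f_1,f_2)$ are equivalent to the same conditions for $f_1$ at $(\alpha_0,\k_0)$. This gives the stated equivalence in case (i). The remaining work is to show that in cases (ii) and (iii) no nondegenerate unfolded cusp can occur, and also that $f_1$ cannot then satisfy B2, B4 and B5 (which is automatic in case (ii), since $\partial_\alpha f_1$ is nonsingular).

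For case (ii), I would apply \Cref{prop:elimination} the other way. Solve $\alpha=\phi(\k)$ from $f_1$, and study the scalar function
\[
\hat f(\mu,\k,K)=\sum_{i=1}^n Z_i\,(h(\phi(\k))/\k)^{\wG_i}\,\mu^{\hat Q_i}-K ,
\]
which is a generalised polynomial (posynomial-type) in the single variable $\mu$. A nondegenerate cusp requires a root of multiplicity exactly three. The key step is to use quadraticity to control the exponents $\hat Q_i$. The sources have row sums at most $2$, and $A\wQ=-t\bm 1$ with $\wQ\neq 0$. I would try to show that the distinct values of $\hat Q_i$, together with the exponent $0$ of the constant term $-K$, force at most two sign changes in the coefficient sequence ordered by exponent. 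Then Descartes' rule of signs for real exponents bounds the positive roots, counted with multiplicity, by two, so B2 and B4 cannot hold together. If this sign-change bound proves hard, my fallback is to work directly with \Cref{lemB2,lemB4} using $v=0$, $\hat q^*=\wQ w$ and $\hat v\in\mZ$. In that form B2 reads $A(\wQ^2)\in\mM$, and I would try to show that B4 then fails, using quadraticity to express $A(\wQ^2)$ and $A(\wQ^3)$ in terms of $A\wQ\in\mZ$ and cross terms.

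Case (iii) is the main obstacle. The kernel is still spanned by a vector $(0,q_2)$, so the same simplifications $v=0$ and $\hat q^*=\wQ w$ apply. The conditions B2 and B4 then reduce to exactly the same $\mM$-membership statements as in case (ii), so whatever argument rules out the cusp in case (ii) via the lemmas should carry over verbatim. In the same case, $f_1$ does satisfy B0 at $(\alpha_0,\k_0)$, so I must also show that $f_1$ cannot satisfy B2, B4 and B5 there. My plan is to argue that the unfolding condition B5 fails, or that the nondegeneracy degenerates, when both diagonal blocks are singular. I expect this double-singularity case to need the most care.
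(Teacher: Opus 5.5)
\textbf{Verdict: the proposal has genuine gaps.} The key structural fact in case (ii) is assumed rather than proved, case (iii) is not actually handled, and the extra obligation you raise for the ``if'' direction is left open.

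\textbf{Cases (i) and (ii).} Case (i) is handled exactly as in the paper. For case (ii) your route differs from the paper's. You eliminate $\alpha$ and treat $\hat f$ as a one-variable generalised polynomial in $\mu$. The paper instead keeps the full system and works with the $\mM$-membership forms of B2 and B4 (with $v=0$ and $\hat q^*=\wQ$ up to scaling).

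Your route is viable, but all of its content sits in a fact you only say you would ``try to show''. Since $\mathrm{rank}\,[A\,|\,\bm 1]=n$, the kernel of $[A\,|\,\bm 1]$ is one-dimensional. Combined with the relations $\hat Q_i=-t$, $2\hat Q_i=-t$ and $\hat Q_i+\hat Q_j=-t$ imposed by sources $X_i$, $2X_i$ and $X_i+X_j$, this forces $\wQ$, after scaling, to be a $(-1,0,1)$- or $(0,1,2)$-vector. The paper does not prove this; it quotes it from Proposition 5.13 of the companion paper on alternative equations.

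Once you have it, the exponent set $\{\hat Q_i\}\cup\{0\}$ has at most three elements, so your sign-change bound is automatic. Descartes is then unnecessary: after multiplying by $\mu$ if needed (\Cref{prop:prefactor}), $\hat f$ is a polynomial of degree at most $2$ in $\mu$. Hence $\hat f_{\mu\mu\mu}\equiv 0$ and B4 fails. Without the structural fact, nothing in your sketch bounds the number of exponents.

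\textbf{Case (iii).} This is not the obstacle you expect, but your plan there is incomplete. Your primary argument needs $\partial_\alpha f_1$ to be invertible. Your lemma-based fallback is unspecified, and in particular does not say how to handle the term $A(\hat q^*\circ\hat h^*)$ that enters B4.

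In fact the case is trivial:
\begin{itemize}
\item Since $\ker J=\mathrm{span}\{(0,1)\}$ and $\partial_\alpha f_1$ is singular, some $a\in\ker\partial_\alpha f_1$ has $\partial_\alpha f_2\,a\neq 0$. Hence $\{0\}\times\R\subseteq\mathrm{im}\,J$.
\item Since $f_1$ does not depend on $\mu$, both $B(q,q)$ and $C(q,q,q)-3B(q,h)$ have vanishing first block.
\item So B2 holds and B4 fails automatically, with no use of quadraticity.
\end{itemize}
Given $\partial_\mu f_2=0$, this is exactly the paper's sub-case $\mathrm{rank}\,\wJ=r-1$, where $\mathrm{im}\,\Gamma+\ker\wJ=\R^n$. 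Quadraticity is needed only in the sub-case $\mathrm{rank}\,\wJ=r$, which is your case (ii). There the paper uses $\wQ^3\in\mathrm{span}\{\wQ,\wQ^2\}$ together with $\hat h^*+(\hat q^*)^2\in\mathrm{span}\{\hat q^*\}$ to show that B4 fails whenever B2 holds.

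\textbf{The ``if'' direction.} You are right that it also requires ruling out that $f_1$ satisfies B2, B4 and B5 while $\partial_\mu f_2=0$. However, you give no argument, and the mechanism you suggest has no evident basis. Those conditions on $f_1$ involve neither $\mu_0$ nor $f_2$, whereas case (iii) is defined by $\partial_\mu f_2=ZD_{x_0}\wQ/\mu_0=0$. The paper's proof does not address this point either: it shows only that a cusp forces situation (i), and then applies \Cref{prop:elimination}.
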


\begin{proof}
Let $\wJ:= J^{(g)}(x_0, \k_0)$ and $J:= J^{(f)}(\alpha_0, \mu_0, \k_0, K)$. The assumption of $\mP$-toricity implies that $s_\mP = r = n-1$, and thus $Z$ and $\wQ$ have rank $1$, and $\mu$ is a scalar variable. The mass action system has a nondegenerate cusp bifurcation, unfolded by its parameters, at $(x_0, \k_0)$ if and only if $\tilde{g}_K$ satisfies conditions B2, B4 and B5 at $(x_0, \k_0)$, equivalently, by \Cref{foldcuspmain}, $f=(f_1, f_2)$ satisfies these conditions at $(\alpha_0, \mu_0, \k_0, K)$. 

Suppose that $\partial_\mu f_2(\alpha_0, \mu_0, \k_0)$ is singular, namely $\partial_\mu f_2(\alpha_0, \mu_0, \k_0)=0$. We show that condition B4 cannot be satisfied, and thus nondegenerate cusp bifurcations are impossible (this is intuitively plausible as $f_2$ is at-most-quadratic in the variable $\mu$, see \cite[Proposition 5.13]{banajifeliu26}). Define $q := (0, \mu_0)$, and note that by assumption $\{q\}$ spans $\ker J$. Hence $v=0$. From \eqref{eqhatq}, $\hat{q}^* = \wQ$, consequently $A\wQ \in \mA$. From \Cref{lemB2}, B2 is satisfied, namely $B(q,q) \in \mathrm{im}\,J$ if and only if $A(\hat{q}^*)^2 \in \mM$; and from \Cref{lemB4}, with $h \in J^{-1}(B(q,q))$, B4 fails, namely $C(q,q,q) - 3B(q,h) \in \mathrm{im}\,J$ if and only if $2A(\hat{q}^*)^3 + 3A(\hat{q}^*\circ \hat{h}^*) \in \mM$.

A quick calculation reveals that for any vector $u \in \R^n$, the condition $A u^* \in \mM$ is equivalent to $u \in \mathrm{im}\,\Gamma + \ker \wJ$. By \Cref{simplydegen}, if $\mathrm{rank}\,\wJ = r-1\,\, (= n-2)$, then $\mathrm{im}\,\Gamma + \ker \wJ = \R^n$, and this condition is automatically satisfied. In this case, condition B2 is automatically satisfied, and B4 automatically fails, so in fact both fold and cusp bifurcations are ruled out. (Note that we have not used the assumption that the network is quadratic up to this point.) 

So now assume that $\mathrm{rank}\,\wJ = r\,\, (= n-1)$, hence $\ker \wJ = \mathrm{span}\,\{\hat{q}\} \subseteq \mathrm{im}\,\Gamma$. Then condition B2 is satisfied, namely $A(\hat{q}^*)^2 \in \mM$, if and only if $\hat{q}^2/x_0 \in \mathrm{im}\,\Gamma + \ker \wJ = \mathrm{im}\,\Gamma$, equivalently $A(\hat{q}^*)^2\,\, (=A\wQ^2) \in \mA$. Similarly, condition B4 fails if and only if $2A(\hat{q}^*)^3 + 3A(\hat{q}^*\circ \hat{h}^*) \in \mA$. We now assume that condition B2 is satisfied, and use the fact that the network is quadratic. In this case, $\wQ$ can be chosen to be either a $(-1,0,1)$-vector or a $(0,1,2)$-vector, see Proposition~5.13 and the preceding lemmas in \cite{banajifeliu26}. In the first case $\wQ^3 = \wQ$, while in the second $\wQ^3 = 3\wQ^2-2\wQ$; thus in both cases $\wQ^3 \in \mathrm{span}\{\wQ, \wQ^2\}$. Thus, as $A\wQ, A\wQ^2 \in \mA$, we obtain $A(\hat{q}^*)^3 \,\,(= A\wQ^3) \in \mA \subseteq \mM$. Further, $\Gamma D_{c_0}[A \hat{h}^* + A(\hat{q}^*)^2]=0$ (see \eqref{eqcstar1} in the proof of \Cref{lemB4}); equivalently, $\wJ(\hat{h} + \hat{q}^2/x_0) = 0$; equivalently $\hat{h}^* + (\hat{q}^*)^2 = t\hat{q}^*$ for some $t \in \R$ (as $\ker \wJ = \mathrm{span}\,\{\hat{q}\}$). Consequently, $\hat{q}^* \circ \hat{h}^*  = -(\hat{q}^*)^3 + t(\hat{q}^*)^2 \in \mathrm{span}\{\wQ, \wQ^2\}$, hence $A(\hat{q}^* \circ \hat{h}^*) \in \mA \subseteq \mM$, and condition B4 fails. 

We have shown that if $\partial_\mu f_2(\alpha_0, \mu_0, \k_0)$ is singular (namely, zero) then whenever condition B2 holds, condition B4 cannot be satisfied. So, for cusp bifurcations to be possible, we must have situation (i) above with $\partial_\alpha f_1(\alpha_0, \k_0)$ singular and $\partial_{\mu} f_2(\alpha_0, \mu_0, \k_0)$ nonzero; then, by \Cref{prop:elimination}, $f$ satisfies conditions B2, B4 and B5 at $(\alpha_0, \mu_0, \k_0, K)$ if and only if $\partial_\alpha f_1(\alpha_0, \k_0)$ satisfies conditions B2, B4 and B5 at $(\alpha_0, \k_0)$.  
\end{proof}

\begin{remark}[The conclusions of \Cref{thmPtoricquad} can hold more generally]
The assumptions that $\mP$ is the trivial partition, and the network is quadratic, are only relevant to the conclusions of \Cref{thmPtoricquad} via the fact that $\wQ$ can then be chosen to be a vector with no more than two distinct nonzero values in its entries. If this holds for any $\mP$-toric $(n,m,n-1)$ network, then the conclusions of \Cref{thmPtoricquad} hold. 
\end{remark}

From \Cref{thmPtoricquad}, we have the corollary:

\begin{corollary}[Quadratic networks forbidding cusp bifurcations]
\label[corollary]{propnnn1}
The following classes of networks forbid cusp bifurcations:
\begin{enumerate}
\item[(i)] Quadratic $(n,n,n-1)$ networks. 
\item[(ii)] Quadratic $(n,n+1,n-1)$ networks with affinely dependent sources. 
\end{enumerate}
\end{corollary}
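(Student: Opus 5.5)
The plan is to reduce both parts to \Cref{thmPtoricquad}, which says that for a $\mP$-toric quadratic $(n,m,n-1)$ network (trivial partition) a cusp bifurcation at a simply degenerate point occurs if and only if $f_1=h(\alpha)^W-\k^W$ satisfies B2, B4 and B5 at $(\alpha_0,\k_0)$. So in each case I will (a) check that the network is either $\mP$-overdetermined, hence degenerate and free of bifurcations, or $\mP$-toric with the trivial partition; and (b) show that the solvability system $f_1$ can never satisfy B2, B4 and B5 together. Step (b) will follow from counting: the number of equations/variables in $f_1$ is $m-r-1$, and the number of independent inner parameters is the number of rows of $W$, namely $m-s_\mP-1$.

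For (i), $m=n$ and $r=n-1$. With the trivial partition, $s_\mP=\mathrm{rank}[A\,|\,\bm{1}]-1\le n-1=r$. If $s_\mP<r$ the network is overdetermined and we are done, so assume $s_\mP=r=n-1$, making the network $\mP$-toric. Then $m-r-1=0$, so $f_1$ is empty (and $W$ is empty, since $m-s_\mP-1=0$). In the case analysis preceding \Cref{thmPtoricquad}, the singular block must therefore be $\partial_\mu f_2$, since $\partial_\alpha f_1$ is vacuous. The proof of \Cref{thmPtoricquad} showed that in that situation B2 forces B4 to fail. Hence no nondegenerate cusp can occur. I should double-check that the statement ``$f_1$ satisfies B2, B4, B5'' is meaningless, i.e.\ false, for an empty system. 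That follows directly from the proof: situation (i) with $\partial_\alpha f_1$ singular is impossible.

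For (ii), $m=n+1$ and $r=n-1$. Affine dependence of the $n+1$ sources means $\mathrm{rank}[A\,|\,\bm{1}]\le n$, so $s_\mP\le n-1=r$. As before, either the network is overdetermined or $s_\mP=r$ and it is $\mP$-toric. Then $f_1$ is a single equation in a scalar $\alpha$, and $W$ has $m-s_\mP-1=1$ row, so $f_1(\alpha,\k)=h(\alpha)^W-\k^W$ depends on $\k$ only through the single scalar $\theta=\k^W$. The step that needs care is showing B5 fails for $f_1$ for every pair $\k_i,\k_j$. Since $\partial_{\k_i}f_1=c_i\,\partial_\theta \tilde f_1$ with $\tilde f_1(\alpha,\theta)=h(\alpha)^W-\theta$, I would choose $(K_1,K_2)=(c_j,-c_i)$ (nonzero unless $c_i=c_j=0$, in which case any choice works). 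This gives $\mD f_1=0$ and $\mD[\partial_\alpha f_1]=0$. Taking $h=0$ then makes $\mD[f_y]q-B(q,h)=0\in\mathrm{im}\,J$, so B5 fails. More cleanly, I can invoke \Cref{prop:reparam} with $\Theta(\k)=\k^W$ to see that only one parameter is available, while a cusp needs two. Combining with \Cref{thmPtoricquad} gives the claim. For the overdetermined alternative, \cite[Theorem~4.2(iii)]{banajifeliu26} gives degeneracy, so no simply degenerate points exist and cusps are excluded.
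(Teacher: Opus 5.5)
Your argument follows the paper's proof essentially step for step. You rule out the $\mP$-overdetermined case and note that otherwise $s_\mP=n-1=r$, so the network is $\mP$-toric. You then invoke \Cref{thmPtoricquad}, with an empty solvability system in (i) and a one-equation, one-inner-parameter solvability system in (ii). Your explicit choice of $(K_1,K_2)$ annihilating $\mD f_1$ just unpacks the paper's appeal to \Cref{prop:reparam}.

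One justification needs correcting. In the overdetermined case, degeneracy of the network does \emph{not} mean that no simply degenerate points exist. For example, take the quadratic $(2,2,1)$ network $\fX+\fY \to 2\fX$, $\fX+\fY \to 2\fY$ with rate constants $\k_1=\k_2$. Every positive point is then an equilibrium whose restricted Jacobian is the $1\times 1$ zero matrix, so every equilibrium is simply degenerate. The correct reason cusps are excluded is this: a nondegenerate cusp unfolded by the rate constants would produce nondegenerate positive equilibria at nearby parameter values, which contradicts the degeneracy of the network.
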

\begin{proof}
Let $\mP$ be the trivial partition. In both cases, if the network is $\mP$-overdetermined, hence degenerate, the result is trivial as no fold or cusp bifurcations can occur. So assume that the network is not $\mP$-overdetermined, namely $s_{\mP} \geq r = n-1$. Then in both cases we must have $s_\mP = n-1$, namely the network is $\mP$-toric: in case (i), $s_{\mP} = \mathrm{rank}[A\,|\,\bm{1}_{\mP}]-1 < m=n$ is automatic; in case (ii), $s_{\mP} =\mathrm{rank}[A\,|\,\bm{1}_{\mP}]-1 <m-1 = n$ follows from affine dependence of the sources. 

(i) In this case, the solvability system is empty, and so, by \Cref{thmPtoricquad}, cusp bifurcations are impossible. 

(ii) In this case, the solvability system $h(\alpha)^W - \k^W = 0$ consists of a single equation in one variable, with exactly one parameter $\k^W$, and so (using \Cref{prop:reparam}) cannot satisfy condition B5; by \Cref{thmPtoricquad}, a nondegenerate, fully unfolded, cusp bifurcation is impossible. 
\end{proof}

\subsection{Closing remarks}
The presentation of results here has been geared to a specific application, namely, mass action networks; however, it is clear that, using the same principles of Gale duality, the main construction can be adapted to the study of fold and cusp bifurcations in more general polynomial systems on the positive orthant. Further, the fact that the exponent matrix is an integer matrix derived from a mass action network, and that the cone $\mC$ arises as the positive flux cone of the same network, are not relevant to the main results; these would apply essentially unchanged in a setting such as that of \cite{regensburger:gale}.

In forthcoming work, we will illustrate the power of some of the techniques in this paper by using them to help us completely classify cusp bifurcations in the smallest bimolecular networks where these can occur: $(2,5,2)$ networks, and $(3,4,2)$ networks with affinely independent sources.

\vskip 5mm
{\bf Acknowledgements.} The research here was supported by Research England under the Expanding Excellence in England (E3) funding stream awarded to MARS: Mathematics for AI in Real-world Systems in the School of Mathematical Sciences at Lancaster University. I would also like to thank Bal\'azs Boros, for many useful discussions which led to the formulation of the results in this manuscript. 

\vskip 5mm
{\bf Leiden declaration.} No AI tools were used at any stage during formulation or checking of the results in this manuscript; nor during the writing of this manuscript.

\appendix
\section{Proofs of \Cref{prop:coord,prop:elimination,prop:prefactor,prop:power}}

\subsection{Changing coordinates} 
\label{app:proofs}

\begin{proof}[Sketch proof of \Cref{prop:coord}]
Define $M := \Psi_{\hat{y}}(\hat{p}_0)$, so that $M^{-1} = \Psi^{-1}_y(p_0)$; and $J := f_y(p_0)$, $\wJ := \hat{f}_{\hat{y}}(\hat{p}_0)$. The multilinear functions $B(\cdot, \cdot)$, $\wB(\cdot, \cdot)$, $C(\cdot, \cdot, \cdot)$ and $\wC(\cdot, \cdot, \cdot)$ are defined analogously. Then the following five claims hold:

{\bf Claim 1 (condition B1).} $\wJ = M^{-1} J M$, so (i) $J$ and $\wJ$ have the same spectrum, and (ii) $\wJ \hat{q} = 0$ if and only if $Jq = 0$ where $q=M\hat{q}$.

{\bf Claim 2 (condition B2).} For any $\hat{h}$, $\wB(\hat{q},\hat{h}) =  \Psi^{-1}_{yy}(q, J M\hat{h}) + M^{-1} B(q,M\hat{h}) + M^{-1}J B^{(\Psi)}(\hat{q},\hat{h})$. In particular, $B(q,q) = M\wB(\hat{q}, \hat{q}) -JB^{(\Psi)}(\hat{q},\hat{q})$, and so $B(q,q) \in \mathrm{im}\,J$ if and only if $\wB(\hat{q},\hat{q}) \in \mathrm{im}\wJ$. 

{\bf Claim 3 (condition B3).} For each $i$, $f_{\beta_i} =M\hat{f}_{\beta_i} -J\Psi_{\beta_i}$. Consequently, $\mathrm{im}\,f_\beta \subseteq \mathrm{im}\,J$ if and only if $\mathrm{im}\,\hat{f}_\beta \subseteq \mathrm{im}\,\wJ$. 

Now assume that $B(q,q) \in \mathrm{im}\,J$, equivalently (by Claim 2), $\wB(\hat{q},\hat{q}) \in \mathrm{im}\wJ$. 

{\bf Claim 4 (condition B4).} Given any $\hat{h} \in \wJ^{-1}(\wB(\hat{q},\hat{q}))$, and defining $h:= M\hat{h} - B^{(\Psi)}(\hat{q},\hat{q}) \in J^{-1}(B(q,q))$, we have
\[
C(q,q,q) - 3B(q,h) = M\left(\wC(\hat{q},\hat{q},\hat{q}) - 3 \wB(\hat{q},\hat{h})\right) -J\left(C^{(\Psi)}(\hat{q},\hat{q},\hat{q}) -3B^{(\Psi)}(\hat{q},\hat{h})\right)\,,
\]
Consequently, $C(q,q,q) - 3B(q,h) \in \mathrm{im}\,J$ if and only if $\wC(\hat{q},\hat{q},\hat{q}) - 3 \wB(\hat{q},\hat{h}) \in\mathrm{im}\,\wJ$. 

Define $\mD := K_1\partial_{\beta_i} + K_2\partial_{\beta_j}$ for some $(K_1,K_2) \neq (0,0)$ and some $i \neq j$.

{\bf Claim 5 (condition B5).} $\mD f = M\mD \hat{f}-J\mD \Psi$, and so $\mD f \in \mathrm{im}\,J$ if and only if $\mD \hat{f} \in \mathrm{im}\wJ$. Suppose this is the case. Then, given $\hat{h} \in \wJ^{-1}(\mD \hat{f})$, and $h := M\hat{h} - \mD\Psi$ (so $h  \in J^{-1}(\mD f)$), 
\[
\mD[f_y] q - B(q,h)  = M\left(\mD[\hat{f}_{\hat{y}}]\hat{q} - \wB(\hat{q}, \hat{h})\right) - J\left(\mD[\Psi_{\hat{y}}]\hat{q} - B^{(\Psi)}(\hat{q},\hat{h})\right)\,.
\]
Consequently, $\mD[f_y] q - B(q,h) \in \mathrm{im}\,J$ if and only if $\mD[\hat{f}_{\hat{y}}]\hat{q} - \wB(\hat{q}, \hat{h}) \in\mathrm{im}\wJ$. 

The previous 5 claims imply the result.
\end{proof}

\subsection{Eliminating variables}

\begin{proof}[Sketch proof of \Cref{prop:elimination}]
Observe that $\phi_x = -\hat{g}_y^{-1}\hat{g}_x$, and that
\[
J = \left(\begin{array}{cc}f_x&0\\g_x&g_y\end{array}\right) = \left(\begin{array}{cc}\hat{f}_x + \hat{f}_y\phi_x&0\\-\phi_x&I\end{array}\right) = \left(\begin{array}{cc}I & -\hat{f}_y\hat{g}_y^{-1}\\0 & \hat{g}_y^{-1}\end{array}\right)\left(\begin{array}{cc}\hat{f}_x&\hat{f}_y\\\hat{g}_x&\hat{g}_y \end{array}\right) = M\wJ\,,
\]
where the final equality defines $M$, and all derivatives are evaluated at $p_0$. 

Observe that $M$ is invertible, and note that, $0 \neq q$ spans $\ker J$ if and only if $q$ spans $\ker\wJ$. Fix such a $q = (q_1, q_2)$, and note that $q_2 = \phi_x q_1$. The equivalence of each condition B2--B5 for $\wF$ and $F$, hence of (a) and (b), will now follow from Claims 1 to 4 below. 

{\bf Claim 1 (condition B2).} $B^{(F)}(q,q) = MB^{(\wF)}(q,q)$, hence $B^{(F)}(q,q) \in \mathrm{im}\,J$ if and only if $B^{(\wF)}(q,q) \in \mathrm{im}\,\wJ$. (i) Differentiating $0 \equiv \hat{g}(x,\phi(x,\beta),\beta)$ gives $0 = B^{(\hat{g})}(q,q) + \hat{g}_y B^{(\phi)}(q_1,q_1)$. (ii) Differentiating $f(x,\beta) \equiv \hat{f}(x,\phi(x,\beta),\beta)$ and applying (i) gives $B^{(f)}(q,q) = B^{(\hat{f})}(q,q) - \hat{f}_y\hat{g}_y^{-1}B^{(\hat{g})}(q,q)$. (iii) Differentiating $g(x,y,\beta) \equiv y - \phi(x,\beta)$ and applying (i) gives $B^{(g)}(q,q) = -B^{(\phi)}(q_1,q_1) = \hat{g}_y^{-1} B^{(\hat{g})}(q,q)$. 

{\bf Claim 2 (condition B3).} For each $i$, $F_{\beta_i} = M\wF_{\beta_i}$, and so $\mathrm{im}\,\wF_\beta \subseteq \mathrm{im}\,\wJ$ if and only if $\mathrm{im}\,F_\beta \subseteq \mathrm{im}\,J$. Differentiating $f$ and $g$ gives $f_{\beta_i} = \hat{f}_{\beta_i} + \hat{f}_y\phi_{\beta_i}$ and $g_{\beta_i} = -\phi_{\beta_i}$; while differentiating $0 \equiv \hat{g}(x,\phi(x,\beta),\beta)$ gives $\phi_{\beta_i}=-\hat{g}_y^{-1}\hat{g}_{\beta_i}$, from which the claim follows. 

Now suppose that $B^{(F)}(q,q) \in \mathrm{im}\,J$, equivalently, by Claim 1, $B^{(\wF)}(q,q) \in \mathrm{im}\,\wJ$. Observe that, by Claim 1, $J^{-1}(B^{(F)}(q,q)) = \wJ^{-1}(B^{(\wF)}(q,q))$. Fix $h = (h_1, h_2) \in J^{-1}(B^{(F)}(q,q))$. 

{\bf Claim 3 (condition B4).} $C^{(F)}(q,q,q) - 3B^{(F)}(q, h) = M[C^{(\wF)}(q,q,q) - 3B^{(\wF)}(q, h)]$, consequently, $C^{(F)}(q,q,q) - 3B^{(F)}(q, h) \in \mathrm{im}\,J$ if and only if $C^{(\wF)}(q,q,q) - 3B^{(\wF)}(q, h) \in \mathrm{im}\,\wJ$. This can be proved following the same template as for Claim 1.

{\bf Claim 4 (condition B5).} Fix $i,j \in \{1, \ldots, n\}$ and $(K_1, K_2) \in \R^2\backslash\{0\}$, and define $\mD:= K_1 \partial_{\beta_i}+ K_2 \partial_{\beta_j}$. Then (i) $\mD F  = M\mD \wF $, hence $\mD F  \in \mathrm{im}\,J$ if and only if $\mD \wF  \in \mathrm{im}\,\wJ$; and (ii) if $\mD F  \in \mathrm{im}\,J$, hence $\mD \wF  \in \mathrm{im}\,\wJ$, then choosing any $\tilde{h} \in J^{-1}(\mD F)$, equivalently, from (i), $\tilde{h} \in \wJ^{-1}( \mD \wF$), $\mD[\partial_{(x,y)}F]q - B^{(F)}(q,\tilde{h}) = M[\mD[\partial_{(x,y)}\wF]q - B^{(\wF)}(q,\tilde{h})]$, hence $\mD[\partial_{(x,y)}F]q - B^{(F)}(q,\tilde{h}) \in \mathrm{im}\,J$ if and only if  $\mD[\partial_{(x,y)}\wF]q - B^{(\wF)}(q,\tilde{h}) \in \mathrm{im}\,\wJ$. 

This completes the proof that (a) $ \Leftrightarrow$ (b). 

We now prove the equivalence of each condition B1--B5 for $F$ at $p_0$ and $f$ at $p_0'$, hence of (b) and (c). As noted earlier, $q=(q_1, q_2) \in \ker J$ if and only if $q_1 \in \ker J^{(f)}$ and $q_2 = \phi_x q_1$. 

{\bf Claim 5.} If $v_1 \in \mathrm{im}\,J^{(f)}$ then $(v_1,v_2) \in \mathrm{im}\,J$ for all $v_2$; and if $(v_1, v_2) \in \mathrm{im}\,J^{(F)}$ for some $v_2$, then $v_1 \in \mathrm{im}\,J^{(f)}$. These are both trivial claims. 

{\bf Claim 6 (condition B1).} $p_0$ is a simply degenerate point of $F$ if and only if $p_0'$ is a simply degenerate point of $f$. This is immediate as the eigenvalues of $J$ are precisely those of $J^{(f)}$ with $n_2$ additional eigenvalues $1$. 

{\bf Claim 7 (condition B2).} $B^{(f)}(q_1,q_1) \in \mathrm{im}\,J^{(f)}$ if and only if $B^{(F)}(q,q) \in \mathrm{im}\,J$. This follows from Claim 5 as:
\[
B^{(F)}(q,q) = \left(\begin{array}{c}B^{(f)}(q_1,q_1)\\-B^{(\phi)}(q_1,q_1)\end{array}\right)\,.
\]

{\bf Claim 8 (condition B3).} For each $i$, $f_{\beta_i} \in \mathrm{im}\,J^{(f)}$ if and only if $F_{\beta_i} \in \mathrm{im}\,J$, and so $\mathrm{im}\,f_\beta \subseteq \mathrm{im}\,J^{(f)}$ if and only if $\mathrm{im}\,F_\beta \subseteq \mathrm{im}\,J$. This follows from Claim 5 as the first component of $F_{\beta_i}$ is $f_{\beta_i}$.

Now suppose that $B^{(F)}(q,q) \in \mathrm{im}\,J$, equivalently (by Claim 7), $B^{(f)}(q_1,q_1) \in \mathrm{im}\,J^{(f)}$. Let $h = (h_1, h_2)$ be defined as before, so that  $B^{(F)}(q,q) = Jh$. Recall that this implies that $h_2 = B^{(\phi)}(q_1, q_1) + \phi_x h_1$. 

{\bf Claim 9 (condition B4).} $C^{(f)}(q_1,q_1,q_1) - 3B^{(f)}(q_1,h_1) \in \mathrm{im}\,J^{(f)}$ if and only if $C^{(F)}(q,q,q) - 3B^{(F)}(q,h) \in \mathrm{im}\,J$. This follows from Claim 5, noting that
\[
B^{(F)}(q,h) = \left(\begin{array}{c}B^{(f)}(q_1,h_1)\\-B^{(\phi)}(q_1, h_1)\end{array}\right)\,, \quad C^{(F)}(q,q,q) = \left(\begin{array}{c}C^{(f)}(q_1,q_1,q_1)\\-C^{(\phi)}(q_1,q_1,q_1)\end{array}\right)\,.
\]

{\bf Claim 10 (condition B5).} Fix $i,j \in \{1, \ldots, n\}$ and $(K_1, K_2) \in \R^2\backslash\{0\}$, and define $\mD:= K_1 \partial_{\beta_i}+ K_2 \partial_{\beta_j}$. (i) From Claim 5, $\mD f \in \mathrm{im} J^{(f)}$ if and only if $\mD F \in \mathrm{im}\,J$. (ii) Assume that $\mD f \in \mathrm{im} J^{(f)}$, equivalently, $\mD F \in \mathrm{im}\,J$. Fix some $\tilde{h} = (\tilde{h}_1, \tilde{h}_2) \in J^{-1}(\mD F)$, so that $J^{(f)}\tilde{h}_1 = \mD f$. Then the first component of $\mD[\partial_{(x,y)}F]q - B^{(F)}(q, \tilde{h})$ is precisely $\mD[f_x]q_1 - B^{(f)}(q_1, \tilde{h}_1)$, and hence, by Claim 5, $\mD[\partial_{(x,y)}F]q - B^{(F)}(q, \tilde{h}) \in \mathrm{im}\,J^{(F)}$ if and only if $\mD[f_x]q_1 - B^{(f)}(q_1, \tilde{h}_1) \in \mathrm{im}\,J^{(f)}$.

This completes the proof that (b) $\Leftrightarrow$ (c). 
\end{proof}

\subsection{Applying linear mappings} 

\begin{proof}[Sketch proof of \Cref{prop:prefactor}]
Let $J:=J^{(f)}$ and $\wJ:=J^{(\hat{f})}$, and observe that $\wJ = M_0J$. Let $q \in \R^n$ be some nonzero vector spanning $\ker J$, hence $\ker \wJ$. Elementary applications of the chain and product rules give us the following two claims.

{\bf Claim 1 (condition B2).} $B^{(\hat{f})}(q, q) = M_0 B^{(f)}(q, q)$, and hence $B^{(\hat{f})}(q, q) \in \mathrm{im}\,\wJ$ if and only if $B^{(f)}(q, q) \in \mathrm{im}\,J$. 

{\bf Claim 2 (condition B3).} (i) $\hat{f}_\gamma = 0$; (ii) For each $i$, $\hat{f}_{\beta_i} = M_0 f_{\beta_i}$, and so $\mathrm{im}\,\hat{f}_\beta \subseteq \mathrm{im}\,\wJ$ if and only if $\mathrm{im}\,f_\beta \subseteq \mathrm{im}\,J$. 

Let us now assume that $B^{(f)}(q, q) \in \mathrm{im}\,J$, hence by Claim 1, $B^{(\hat{f})}(q, q) \in \mathrm{im}\,\wJ$. Fix $h \in J^{-1}(B^{(f)}(q,q))$, and observe from Claim 1 that $h \in \wJ^{-1}(B^{(\hat{f})}(q,q))$. 

{\bf Claim 3 (condition B4).} $C^{(\hat{f})}(q,q,q) - 3B^{(\hat{f})}(q,h) =  M_0\left[C^{(f)}(q,q,q) -3B^{(f)}(q, h)\right]$, and thus $C^{(\hat{f})}(q,q,q) - 3B^{(\hat{f})}(q,h) \in \mathrm{im}\,\wJ$ if and only if $C^{(f)}(q,q,q) -3B^{(f)}(q, h) \in \mathrm{im}\,J$. By the chain rule, we have $C^{(\hat{f})}(q,q,q) = M_0C^{(f)}(q,q,q)+ 3 [\nabla_{q} M]B^{(f)}(q,q)$, where $\nabla_{q} M := \left.\frac{\partial}{\partial t} M(y_0 + tq, \beta_0, \gamma_0)\right|_{t=0}$. We calculate $B^{(\hat{f})}(q, h) = M_0B^{(f)}(q, h) + [\nabla_{q}M] B^{(f)}(q,q)$, and the claim follows. 

Given a pair of parameters, say $(\phi, \psi)$, and constants $(K_1,K_2) \neq (0,0)$, define the operator $\mathcal{D}:= K_1\partial_{\phi} + K_2 \partial_\psi$. 

{\bf Claim 4 (condition B5).} If one parameter belongs to $\gamma$, say $\phi = \gamma_i$, then we choose $K_1=1, K_2=0$, leading to $\mathcal{D}\hat{f} = 0 \in \mathrm{im}\,\wJ$, hence $0 = \tilde{h} \in \wJ^{-1}(\mathcal{D} \hat{f})$ and $\mathcal{D}[\hat{f}_y] q - B^{(\hat{f})}(q,\tilde{h}) = 0$ (i.e., $(\phi, \psi)$ cannot unfold a nondegenerate cusp bifurcation of $\hat{f}$). Now suppose that $(\phi, \psi) = (\beta_i, \beta_j)$. Then (i) $\mathcal{D} \hat{f} = M_0\mathcal{D} f$, hence $\mD f \in \mathrm{im}\,J$ if and only if $\mD \hat{f} \in \mathrm{im}\,\wJ$. (ii) Suppose that (i) holds, so $J^{-1}(\mathcal{D} f)= \wJ^{-1}(\mathcal{D} \hat{f})$, and choose $\tilde{h} \in J^{-1}(\mathcal{D} f)$. We check that $\mathcal{D}[\hat{f}_y] q - B^{(\hat{f})}(q,\tilde{h}) = M_0[\mathcal{D}[f_y] q - B^{(f)}(q,\tilde{h})]$, and so $\mathcal{D}[\hat{f}_y] q - B^{(\hat{f})}(q,\tilde{h}) \in\mathrm{im}\,\wJ$ if and only if $\mathcal{D}[f_y] q - B^{(f)}(q,\tilde{h}) \in \mathrm{im}\,J$. 

This completes the proof.
\end{proof}

\subsection{Applying a diffeomorphism to both sides of equations} 

\begin{proof}[Sketch proof of \Cref{prop:power}]
Let $J:=J^{(f)}$ and $\wJ:=J^{(\hat{f})}$, and recall that $\wJ = M_0J$. Let $q \in \R^n$ be some nonzero vector spanning $\ker J$, hence $\ker \wJ$. We have:

{\bf Claim 1 (condition B2).} $B^{(\hat{f})}(q, q) = M_0 B^{(f)}(q, q)$, and hence $B^{(\hat{f})}(q, q) \in \mathrm{im}\,\wJ$ if and only if $B^{(f)}(q, q) \in \mathrm{im}\,J$. 

{\bf Claim 2 (condition B3).} For each $i$, $\hat{f}_{\beta_i} = M_0 f_{\beta_i}$, and so $\mathrm{im}\,\hat{f}_\beta \subseteq \mathrm{im}\,\wJ$ if and only if $\mathrm{im}\,f_\beta \subseteq \mathrm{im}\,J$. 

Let us now assume that $B^{(f)}(q, q) \in \mathrm{im}\,J$, hence $B^{(\hat{f})}(q, q) \in \mathrm{im}\,\wJ$. Fix $h \in J^{-1}(B^{(f)}(q,q))$, and observe from Claim 1 that $h \in \wJ^{-1}(B^{(\hat{f})}(q,q))$. 

{\bf Claim 3 (condition B4).} $C^{(\hat{f})}(q,q,q) =  M_0C^{(f)}(q,q,q)$, $B^{(\hat{f})}(q,h) =  M_0B^{(f)}(q, h)$, hence $C^{(\hat{f})}(q,q,q) - 3B^{(\hat{f})}(q,h) =  M_0\left[C^{(f)}(q,q,q) -3B^{(f)}(q, h)\right]$, and thus $C^{(\hat{f})}(q,q,q) - 3B^{(\hat{f})}(q,h) \in \mathrm{im}\,\wJ$ if and only if $C^{(f)}(q,q,q) -3B^{(f)}(q, h) \in \mathrm{im}\,J$. 

Given $\beta_i, \beta_j$, and $(K_1,K_2) \neq (0,0)$, define the operator $\mathcal{D}:= K_1\partial_{\beta_i} + K_2 \partial_{\beta_j}$. 

{\bf Claim 4 (condition B5).} (i) $\mathcal{D} \hat{f} = M_0\mathcal{D} f$, hence $\mD f \in \mathrm{im}\,J$ if and only if $\mD \hat{f} \in \mathrm{im}\,\wJ$. (ii) Suppose $\mD f \in \mathrm{im}\,J$, equivalently, $\mD \hat{f} \in \mathrm{im}\,\wJ$, and choose $\tilde{h} \in J^{-1}(\mathcal{D} f)$ (hence, $\tilde{h} \in \wJ^{-1}(\mathcal{D} \hat{f})$). We check that $\mathcal{D}[\hat{f}_y] q - B^{(\hat{f})}(q,\tilde{h}) = M_0[\mathcal{D}[f_y] q - B^{(f)}(q,\tilde{h})]$, and so $\mathcal{D}[\hat{f}_y] q - B^{(\hat{f})}(q,\tilde{h}) \in\mathrm{im}\,\wJ$ if and only if $\mathcal{D}[f_y] q - B^{(f)}(q,\tilde{h}) \in \mathrm{im}\,J$. 

This completes the proof.
\end{proof}

\small
\bibliographystyle{plain}

\end{document}